\documentclass[letterpaper, 11pt,  reqno]{amsart}

\setcounter{page}{1}

\usepackage{graphicx}
\usepackage{subfigure}

\usepackage{amssymb,amsmath,amsthm,amscd}

\usepackage{amsxtra, esint, bbm, enumerate}
\usepackage{mathtools} 
\usepackage{skak, bbm} 
\usepackage{ulem}

\usepackage{hyperref}

\usepackage{dsfont}

\setlength{\pdfpagewidth}{8.50in}
\setlength{\pdfpageheight}{11.00in}

\usepackage{mparhack}

\headheight=8pt
\topmargin=0pt
\textheight=624pt
\textwidth=432pt
\oddsidemargin=18pt
\evensidemargin=18pt

\allowdisplaybreaks[2]

\sloppy

\hfuzz  = 0.5cm 


\newtheorem{theorem}{Theorem} [section]

\newtheorem{lemma}[theorem]{Lemma}
\newtheorem{proposition}[theorem]{Proposition}
\newtheorem{remark}[theorem]{Remark}

\newtheorem{definition}[theorem]{Definition}
\newtheorem{corollary}[theorem]{Corollary}



%
%
%


\DeclareMathOperator*{\supp}{supp}

%



\newcommand{\I}{\mathcal{I}}

\newcommand{\noi}{\noindent}
\newcommand{\Z}{\mathbb{Z}}
\newcommand{\R}{\mathbb{R}}
\newcommand{\C}{\mathbb{C}}
\newcommand{\T}{\mathbb{T}}

\newcommand{\TT}{\mathcal{T}}

\newcommand{\BT}{{\bf T}}

\newcommand{\M}{\mathcal{M}}

\newcommand{\NB}{\mathbb{N}}

\newcommand{\FL}{\mathcal{F}L} 
\newcommand{\FLv}{\overrightarrow{\mathcal{F}L}}

\renewcommand{\H}{\mathcal{H}}

\newcommand{\F}{\mathcal{F}}

\newcommand{\dl}{\delta}

\newcommand{\nb}{\nabla}

\newcommand{\Dl}{\Delta}
\newcommand{\eps}{\varepsilon}

\newcommand{\G}{\Gamma}
\newcommand{\ld}{\lambda}
\newcommand{\Ld}{\Lambda}
\newcommand{\s}{\sigma}
\newcommand{\Si}{\Sigma}

\newcommand{\dx}{\partial_x}
\newcommand{\dt}{\partial_t}

\newcommand{\embeds}{\hookrightarrow}

\renewcommand{\O}{\Omega}

\newcommand{\les}{\lesssim}
\newcommand{\ges}{\gtrsim}

\newcommand{\jb}[1]
{\langle #1 \rangle}


%
%

\newcommand{\W}{\mathcal{W}}

\numberwithin{equation}{section}
\numberwithin{theorem}{section}


\newcommand{\abs}[1]{\lvert#1\rvert}


\begin{document}

\title[Norm inflation with infinite loss of regularity for gIBQ]{Norm inflation with infinite loss of regularity for the generalized improved Boussinesq equation}

\author{Pierre de Roubin}

\maketitle

\begin{abstract}
In this paper, we study the ill-posedness issue for the generalized improved Boussinesq equation. In particular we prove there is norm inflation with infinite loss of regularity at general initial data in $\jb{\nb}^{-s}\big(L^2 \cap L^\infty\big)(\R)$ for any $s < 0$. This result is sharp in the $L^2$-based Sobolev scale in view of the well-posedness in $L^2(\R) \cap L^\infty(\R)$. We also show that the same result applies to the multi-dimensional generalized improved Boussinesq equation. Finally, we extend our norm inflation result to Fourier-Lebesgue, modulation and Wiener amalgam spaces.
\end{abstract}

\section{Introduction}

We consider the generalized improved Boussinesq equation (gIBq):
\begin{equation}
\begin{cases}\label{imBq}
\dt^{2}u-\dx^2 u - \dt^2 \dx^2u = \dx^2 \left(f(u) \right) \\
(u,\dt u)|_{t = 0} = (u_0,u_1), 
\end{cases}
\qquad ( t, x) \in \R \times \M, 
\end{equation}

\noi where $\M =  \R$ or $\T$ with $\T = \R / \Z$ and $f(u) = u^k$ for $k \geq 2$ an integer. The equation \eqref{imBq} appears in a wide variety of physical problems. Makhankov~\cite{Mak} derived the improved Boussinesq equation, namely with $f(u) = u^2$, in order to study ion-sound wave propagation and mentioned the case of the improved modified Boussinesq equation, with $f(u) = u^3$, as modeling nonlinear Alfv\'en waves. He also mentioned the possibility to use the linear term of \eqref{imBq} to describe waves propagating at right angles to the magnetic field. Besides, Clarkson-LeVeque-Saxton \cite{CLS} considered the cases $f(u) = u^p$, with $p = 3$ or $5$, as a model for the propagation of longitudinal deformation waves in an elastic rod. See also \cite{Tur} for further discussion on the physical aspect of \eqref{imBq}.

This equation has also attracted attention from a mathematical point of view. In particular, the well-posedness of this problem has been studied extensively and Constantin-Molinet~\cite{CM} proved local well-posedness for \eqref{imBq} in $L^2(\R) \cap L^{\infty}(\R)$ (and global well-posedness under additional conditions). See also \cite{CO, GL, Zhi}. On the other hand, it is known that \eqref{imBq} possesses some undesirable behaviour in negative Sobolev spaces. Following the work of Bourgain~\cite{Bou97}, it was showed in \cite{GL} that the solution map $\Phi \colon H^s(\R) \times H^s (\R)\to C([-T,T], H^s(\R))$ of \eqref{imBq}, for nonlinearity $f(u) = u^k$, fails to be $C^k$ for $s< 0$. This result in particular implies that we can not study the well-posedness of \eqref{imBq} via a contraction argument. Note however that their result does not imply failure of continuity for the data-to-soluton map in negative Sobolev spaces. Our main goal in this article is to complete the well-posedness issue of this problem by proving the discontinuity of the data-to-solution map of \eqref{imBq} in $\jb{\nb}^{-s}\big(L^2 \cap L^\infty\big) (\M)$ for any $s < 0$. For more clarity, let us denote $\W^{s, 2, \infty}(\M) \coloneqq W^{s, 2, \infty}(\M) \times W^{s, 2, \infty}(\M)$ with
$$
W^{s, 2, \infty}(\M) = \jb{\nb}^{-s}\big(L^2 \cap L^\infty\big) (\M) = \{ f, \jb{\nb}^sf \in L^2 (\M) \cap L^\infty (\M)\}.
$$
\noi We exhibit the following norm inflation with infinite loss of regularity behaviour:



\begin{theorem}
\label{THM:mainLinfty}
Let $k \geq 2$, $\s \in \R$ and $s < 0$. Fix $(u_0 , u_1) \in \W^{s, 2, \infty} (\M)$. Then, given any $\eps > 0$, there exists a solution $u_\eps$ to \eqref{imBq} with nonlinearity $f(u) = u^k$ on $\M$ and $t_\eps \in (0, \eps)$ such that
\begin{equation}
\label{EQ:NIwithILORLinfty}
\| (u_\eps (0), \dt u_\eps (0)) - (u_0 , u_1) \|_{ \W^{s, 2, \infty}} < \eps \quad \text{ and } \quad \| u_\eps (t_\eps) \|_{W^{\s, 2, \infty}} > \eps^{-1}.
\end{equation}
\end{theorem}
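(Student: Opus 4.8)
The plan is to run a Bourgain-type norm-inflation argument built on a power-series (Picard) expansion of the solution, adapted to the dispersive structure of \eqref{imBq}. Setting $\omega(\xi):=|\xi|/\jb{\xi}$ — so that $0\le\omega\le 1$, i.e.\ the dispersion is \emph{bounded} — equation \eqref{imBq} becomes $\dt^2\hat u+\omega^2\hat u=-\omega^2\widehat{u^k}$, and Duhamel's formula reads $u(t)=\cos(t\omega(D))u_0+\tfrac{\sin(t\omega(D))}{\omega(D)}u_1+\mathcal D[u^k](t)$ with $\mathcal D[v](t):=-\int_0^t\sin((t-t')\omega(D))\,\omega(D)\,v(t')\,dt'$. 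Two structural facts drive everything. First, the linear propagators $\cos(t\omega(D))$ and $\tfrac{\sin(t\omega(D))}{\omega(D)}$ are bounded on $L^2$, on $L^\infty$, and on $W^{\cdot,2,\infty}(\M)$ uniformly for $|t|\le 1$, since their symbols are smooth bounded functions of $\omega^2=1-\jb{\xi}^{-2}$. Second — and this is decisive — the $\dx^2$ in front of the nonlinearity produces the extra factor $\omega(D)$ in $\mathcal D$, whence $|\sin((t-t')\omega(\xi))\,\omega(\xi)|\le(t-t')\omega(\xi)^2\le t-t'$, so that $\|\mathcal D[v](t)\|_{L^2\cap L^\infty}\les t^2\sup_{[0,t]}\|v\|_{L^2\cap L^\infty}$. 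Iterating, a datum of $L^2\cap L^\infty$-size $\delta$ produces a solution, represented by a power series $u=\sum_{m\ge 0}U_{1+m(k-1)}$ (with $U_1$ the linear evolution of the data and $U_j=\mathcal D[\sum_{j_1+\dots+j_k=j}\prod_l U_{j_l}]$), on a time interval of length $\ges\delta^{-(k-1)/2}$ — for the large data used below this is much longer than the naive $\delta^{-(k-1)}$, and this extra room is exactly what makes the construction close for every $k\ge 2$. (For $(u_0,u_1)$ genuinely rough, the series terms carrying the linear evolution of $(u_0,u_1)$ are kept along and controlled in $W^{s,2,\infty}$ using that multiplication by the smooth profile below is bounded there, together with the low-frequency damping inside $\mathcal D$; on $\T$ one may instead reduce to smooth data by density.)

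The perturbed datum I would take is $(u_0+\phi_0^N,u_1)$, where $\widehat{\phi_0^N}$ is a fixed finite sum of nonnegative modulated bumps (translates of a fixed Gaussian) of common height $A$ centered at the frequencies $\pm N$, together with $\pm 2N$ when $k$ is odd, the integers being chosen so that some $k$-tuple of centers sums to $0$ (take $\tfrac{k+1}{2}$ copies of $N$, $\tfrac{k-3}{2}$ of $-N$ and one of $-2N$ for $k$ odd; $\tfrac{k}{2}$ copies of each of $\pm N$ for $k$ even). Since $\widehat{\phi_0^N}\ge 0$, the convolution $\widehat{(\phi_0^N)^k}=(\widehat{\phi_0^N})^{*k}$ is nonnegative, so there is no cancellation, and the resonant $k$-tuples — the only ones landing near $\xi=0$, as $N$ is large — give $(\widehat{\phi_0^N})^{*k}(\xi)\ges_k A^k$ throughout the fixed band $W:=[\tfrac12,1]\cup[-1,-\tfrac12]$, while $\|\phi_0^N\|_{W^{s,2,\infty}}\sim N^sA$, which is $<\eps$ once $N$ is large (given $A$). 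I would then isolate the first nonlinear iterate $U_k=\mathcal D[U_1^k]$: for $t'\le\tfrac12$ one has $\cos(t'\omega)\ge\cos\tfrac12>0$, and taking $N$ so large that $\widehat{\phi_0^N}$ dominates $|\widehat{u_0}|+|\widehat{u_1}|$ on its support, the convolution $\widehat{U_1(t')^k}(\xi)$ is $\ges_k A^k$ for $\xi\in W$ and $t'\le\tfrac12$ (the non-$\phi_0^N$ contributions being $O(A^{k-1})$). For $\xi\in W$, $t\le\tfrac12$ and $t'\in[0,t]$ the factor $\sin((t-t')\omega(\xi))\,\omega(\xi)\ges t-t'$ is positive, so there is no cancellation in the $t'$-integral and $|\widehat{U_k(t)}(\xi)|\ges_k A^k\int_0^t(t-t')\,dt'\sim A^kt^2$ on all of $W$. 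Because $W$ is a fixed frequency band, $\jb{\nb}^{\s}$ is comparable to a constant there, hence $\|U_k(t)\|_{W^{\s,2,\infty}}\ge\|P_WU_k(t)\|_{W^{\s,2,\infty}}\ges_{k,\s}A^kt^2$ \emph{for every} $\s\in\R$ — this fixed-low-frequency mechanism is precisely what yields the "infinite loss of regularity''.

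It remains to see that this bump survives and to fix the parameters. From $a_j(t):=\sup_{[0,t]}\|U_j\|_{L^2\cap L^\infty}$, $a_1\les A$, $a_j\les t^2\sum_{j_1+\dots+j_k=j}\prod_l a_{j_l}$ (the $t^2$-Duhamel bound together with $\|\prod_l g_l\|_{L^2}\le\prod_l\|g_l\|_{L^2\cap L^\infty}$ and $\|\prod_l g_l\|_{L^\infty}\le\prod_l\|g_l\|_{L^\infty}$), a generating-function estimate gives $\sum_j a_j(t)\les A$ and $\sum_{m\ge 2}\|U_{1+m(k-1)}(t)\|_{L^2\cap L^\infty}\les(t^2A^{k-1})\,t^2A^k$ whenever $t^2A^{k-1}\les 1$. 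Projecting to $W$ (where $\jb{\nb}^\s\sim 1$): $\|P_WU_1(t)\|_{W^{\s,2,\infty}}\le C_0(\s,u_0,u_1)$, since $\widehat{\phi_0^N}$ is supported away from $W$, and the tail contributes $\les(t^2A^{k-1})\,t^2A^k$. Taking $t_\eps:=\eta A^{-(k-1)/2}$ with $\eta$ a small absolute constant forces $t_\eps^2A^{k-1}=\eta^2\ll 1$, so $u_\eps$ exists on $[0,t_\eps]$, the tail is $\le\tfrac12\|P_WU_k(t_\eps)\|_{W^{\s,2,\infty}}$, and therefore $\|u_\eps(t_\eps)\|_{W^{\s,2,\infty}}\ges_{k,\s}A^kt_\eps^2-C_0=c\,\eta^2A-C_0$. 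Now first choose $A=A(\eps)$ so large that $c\eta^2A-C_0>\eps^{-1}$ and $t_\eps=\eta A^{-(k-1)/2}<\eps$ (possible simultaneously since $A^{-(k-1)/2}\to 0$ as $A\to\infty$), then $N=N(\eps,A,u_0,u_1)$ so large that $\|\phi_0^N\|_{W^{s,2,\infty}}\sim N^sA<\eps$ and the earlier "$N$ large'' conditions hold; then $(u_\eps(0),\dt u_\eps(0))=(u_0+\phi_0^N,u_1)$ satisfies \eqref{EQ:NIwithILORLinfty}. On $\T$, and — the dispersion being bounded in all these cases — in higher dimensions and in the Fourier--Lebesgue/modulation/Wiener settings, the same scheme applies with only cosmetic changes to the profile and to the bookkeeping.

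The hard part will be keeping the parameter budget consistent: one must stay inside the interval where the Picard series still converges ($t_\eps\les A^{-(k-1)/2}$, available only because of the $t^2$ gain coming from the $\dx^2$-structure of \eqref{imBq}), while simultaneously $A^kt_\eps^2\sim A$ is already required to exceed $\eps^{-1}$ and $N^sA$ is required to stay below $\eps$; and one must verify that neither the higher Picard iterates nor the (possibly rough) background $(u_0,u_1)$ can cancel the low-frequency bump produced by $U_k$. For rough $(u_0,u_1)$ this last point is genuinely delicate, since $W^{s,2,\infty}$ is not an algebra for $s<0$; it is to be handled by a multilinear estimate tuned to $W^{s,2,\infty}$ that exploits the high-frequency localization of $\phi_0^N$ and the damping of low frequencies built into $\mathcal D$.
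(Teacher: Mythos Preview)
Your approach is essentially the paper's: both expand the solution as a Picard series in a Banach algebra (you iterate in $L^2\cap L^\infty$, the paper in $\FL^1$), isolate the first nonlinear iterate as the term exhibiting high-to-low energy transfer onto a fixed low-frequency region, and close the parameter budget using the $t^2$ Duhamel gain coming from the extra $\omega(D)$ factor --- your amplitude $A$ plays exactly the role of the paper's height $R$, both constructions keeping the Fourier bump width fixed. The only organisational difference is that the paper first proves the $H^s$ statement (Theorem~\ref{THM:main}) for Schwartz data, indexing the series by $k$-ary trees and using characteristic-function bumps on $\{\pm N,\pm 2N\}+Q_A$ with $A=10$ constant, and then deduces Theorem~\ref{THM:mainLinfty} in Remark~\ref{REM:NIinWs2infty} via the observation that the fixed-width Fourier support forces $\|\jb{\nb}^s\phi_n\|_{L^\infty}\les A^{1/2}\|\phi_n\|_{H^s}$, whereas you work directly in $W^{\s,2,\infty}$ by projecting to the band $W$; the rough-background issue you flag at the end is handled in the paper simply by the reduction to Schwartz data.
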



%


Note that, when $\s =s$ and $(u_0, u_1) = (0,0)$, \eqref{EQ:NIwithILORLinfty} is already called {\it norm inflation}. Since we have \eqref{EQ:NIwithILORLinfty} for any arbitrary initial data $(u_0, u_1) \in \W^{s, 2, \infty} (\M)$, we say that we have norm inflation at {\it general initial data}. Besides, Theorem \ref{THM:main} also yields \eqref{EQ:NIwithILORLinfty} for any arbitrary $\s < s$, leading to the so-called {\it infinite loss of regularity}. Observe as well that, as a corollary, Theorem~\ref{THM:mainLinfty} gives the following discontinuity of the solution map.


\begin{corollary}
\label{COR:DiscSolMap}
Let $s<0$. Then, for any $T > 0$, the solution map $\Phi \colon (u_0, u_1) \in \W^{s, 2, \infty}(\M) \to (u, \dt u) \in C([-T, T], W^{s, 2, \infty}(\M)) \times C([-T, T], W^{s, 2, \infty} (\M)) $ to the generalized improved Boussinesq equation \eqref{imBq} is discontinuous everywhere in $\W^{s, 2, \infty} (\M)$. 

\end{corollary}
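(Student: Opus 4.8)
The plan is to deduce Corollary~\ref{COR:DiscSolMap} from Theorem~\ref{THM:mainLinfty} by a short contradiction argument, exploiting that the norm inflation statement holds at \emph{general} initial data. Fix $s < 0$ and $T > 0$, and suppose toward a contradiction that $\Phi$ is continuous at some point $(u_0, u_1) \in \W^{s,2,\infty}(\M)$. By definition of continuity, there is a $\delta > 0$ such that whenever $(v_0, v_1) \in \W^{s,2,\infty}(\M)$ satisfies $\| (v_0, v_1) - (u_0, u_1) \|_{\W^{s,2,\infty}} < \delta$, the corresponding solution $v$ satisfies
\begin{equation}
\label{EQ:contassumption}
\sup_{t \in [-T, T]} \| (v(t), \dt v(t)) - (u(t), \dt u(t)) \|_{\W^{s,2,\infty}} < 1,
\end{equation}
where $u = \Phi(u_0, u_1)$. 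In particular $\sup_{t \in [-T,T]} \| (v(t), \dt v(t)) \|_{\W^{s,2,\infty}} < 1 + \sup_{t \in [-T,T]} \| (u(t), \dt u(t)) \|_{\W^{s,2,\infty}} =: R < \infty$, the last quantity being finite since $u \in C([-T,T], W^{s,2,\infty}(\M))$ with $\dt u$ likewise continuous.

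Now I would apply Theorem~\ref{THM:mainLinfty} with this same base point $(u_0, u_1)$, with $\s = s$, and with $\eps > 0$ chosen small enough that $\eps < \delta$, $\eps < T$, and $\eps^{-1} > R$. The theorem produces a solution $u_\eps$ and a time $t_\eps \in (0, \eps) \subset (0, T)$ with $\| (u_\eps(0), \dt u_\eps(0)) - (u_0, u_1) \|_{\W^{s,2,\infty}} < \eps < \delta$ and $\| u_\eps(t_\eps) \|_{W^{s,2,\infty}} > \eps^{-1} > R$. Since $u_\eps$ has initial data within $\delta$ of $(u_0, u_1)$, the continuity assumption \eqref{EQ:contassumption} forces $\| u_\eps(t_\eps) \|_{W^{s,2,\infty}} \le \| (u_\eps(t_\eps), \dt u_\eps(t_\eps)) \|_{\W^{s,2,\infty}} < R$, contradicting $\| u_\eps(t_\eps) \|_{W^{s,2,\infty}} > R$. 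Hence $\Phi$ cannot be continuous at $(u_0, u_1)$; since $(u_0, u_1) \in \W^{s,2,\infty}(\M)$ was arbitrary, $\Phi$ is discontinuous everywhere.

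The only genuinely delicate point is bookkeeping: one must make sure the solution $u = \Phi(u_0, u_1)$ used as the reference trajectory actually exists on $[-T,T]$ and has finite norm there, so that $R$ is well defined. If $\Phi$ is only assumed defined on a subset of $\W^{s,2,\infty}(\M)$, the statement should be read as asserting discontinuity at every point of its domain; in either reading the argument above is unchanged once $R < \infty$ is secured, which follows from $u$ and $\dt u$ lying in $C([-T,T], W^{s,2,\infty}(\M))$ and the compactness of $[-T,T]$. No further input beyond Theorem~\ref{THM:mainLinfty} is needed, so I expect the proof to be essentially this paragraph of soft functional-analytic reasoning.
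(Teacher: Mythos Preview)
Your argument is correct and is exactly the standard way to deduce discontinuity of the solution map from norm inflation at general initial data; the paper does not spell out a proof of this corollary at all, merely stating that it follows from Theorem~\ref{THM:mainLinfty}. Your brief remark on the bookkeeping issue (that one must identify the $u_\eps$ produced by the theorem with $\Phi$ applied to its initial data, which amounts to a uniqueness assumption implicit in the very existence of a solution map) is well placed and is the only point that requires any care.
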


Throughout the rest of this paper, we prove in fact the following result of norm inflation at general initial data with infinite loss of regularity in negative Sobolev spaces:

\begin{theorem}
\label{THM:main}
Let $k \geq 2$, $\s \in \R$ and $s < 0$. Fix $(u_0 , u_1) \in \H^s (\M) \coloneqq H^s (\M) \times H^s (\M)$. Then, given any $\eps > 0$, there exists a solution $u_\eps$ to \eqref{imBq} with nonlinearity $f(u) = u^k$ on $\M$ and $t_\eps \in (0, \eps)$ such that
\begin{equation}
\label{EQ:NIwithILOR}
\| (u_\eps (0), \dt u_\eps (0)) - (u_0 , u_1) \|_{\H^s} < \eps \quad \text{ and } \quad \| u_\eps (t_\eps) \|_{H^\s} > \eps^{-1}.
\end{equation}
\end{theorem}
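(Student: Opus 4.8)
The plan is to follow the by-now-standard strategy for proving norm inflation via a power-series (Picard iteration) expansion of the solution, in the spirit of Bejenaru--Tao, Iwabuchi--Ogawa, and in particular the treatment of wave-type equations by Oh and collaborators; the adaptation to \eqref{imBq} is dictated by the structure of the linear symbol. First I would recast \eqref{imBq} as an integral (Duhamel) equation. Writing $\jb{\nb}^2 = 1 - \dx^2$ and noting the linear part is $\dt^2 u + \jb{\nb}^{-2}(-\dx^2) u$ after dividing by $\jb{\nb}^2$, the operator governing the free evolution is $\cos(t\omega(\nb))$ and $\frac{\sin(t\omega(\nb))}{\omega(\nb)}$ with Fourier multiplier $\omega(\xi) = \frac{|\xi|}{\jb{\xi}}$, while the nonlinearity $\dx^2(u^k)$ becomes the bounded-in-regularity multiplier $-\jb{\nb}^{-2}\dx^2 = \jb{\nb}^{-2}(\jb{\nb}^2 - 1)$ applied to $u^k$. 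Thus
\begin{equation*}
u(t) = \cos(t\omega(\nb)) u_0 + \frac{\sin(t\omega(\nb))}{\omega(\nb)} u_1 + \int_0^t \frac{\sin((t-t')\omega(\nb))}{\omega(\nb)}\, \jb{\nb}^{-2}(\jb{\nb}^2 - 1)\big(u(t')^k\big)\, dt'.
\end{equation*}
The key structural gain, exactly as in the Boussinesq setting, is that $\omega(\xi)\approx 1$ for $|\xi|\gtrsim 1$, so at high frequencies the propagators behave like those of the (half-)Klein--Gordon / nonlinear wave equation, and the extra smoothing from $\jb{\nb}^{-2}$ partially tames the two derivatives in $\dx^2(u^k)$.

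Next I would set up the iteration scheme. Decompose $u = \sum_{j\geq 1} u_j$ where $u_1(t) = \cos(t\omega(\nb))\phi + \frac{\sin(t\omega(\nb))}{\omega(\nb)}\psi$ is the linear evolution of the perturbed data and $u_j$ for $j\geq 2$ is defined recursively by the multilinear Duhamel terms of homogeneity $j$ in the initial data; here one splits $j = 1 + (k-1)\ell$ into the admissible homogeneities. The chosen initial data is $\phi = \phi_N$, a sum of a large number $R$ of high-frequency bumps: $\ft{\phi_N} \sim \frac{A}{N^\sigma \sqrt{R}}\sum_{|\xi - \xi_0^{(r)}|\lesssim 1}$ with the $R$ frequencies $\xi_0^{(r)}$ of size $\sim N$ chosen so that the $(k-1)$-fold arithmetic sums that contribute to $u_k$ pile up constructively near frequency $\sim N$ (for example a lacunary or arithmetic-progression arrangement), while $\psi = 0$ (or a suitable analogue), and then one adds back the fixed $(u_0,u_1)$; since $(u_0,u_1)\in\H^s$ and the bumps live at frequency $N\to\infty$, the $H^s$-norm of the perturbation is $\lesssim A/N^{|s|}$, which is $<\eps$ once $N$ is large and $A$ is under control. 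The time is taken to be $t_N\to 0$, e.g. $t_N\sim N^{-\beta}$ for an appropriate $\beta$, within $(0,\eps)$.

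The heart of the argument is the \textbf{lower bound on the first nonlinear iterate} $u_k(t_N)$ together with \textbf{upper bounds on the remainder} $\sum_{j\geq k+1} u_j(t_N)$ and on the ``bad'' linear terms. For $u_k$: because of the smoothing the Duhamel multiplier at output frequency $\sim N$ is harmless, and expanding $\sin((t-t')\omega)/\omega$ and the product of $k$ copies of $u_1$ one extracts a term whose time factor is (after the $t'$-integration) of size $\sim t_N^2$ (this is the usual wave-equation mechanism: two slow Duhamel integrations against oscillations of frequency $\omega\approx 1$), and whose spatial Fourier support is concentrated near $|\xi|\sim N$ with amplitude $\sim A^k t_N^2 N^{-k\sigma} R^{(1-k)/2}\cdot(\text{number of constructive tuples})$. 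Choosing $R$ and the arrangement of frequencies so that the number of constructive $(k-1)$-tuples is $\sim R^{k-1}$, the resulting $H^\sigma$-norm is $\gtrsim A^k t_N^2 N^{-k\sigma} R^{(k-1)/2} N^{\sigma}\cdot N^{?}$; tuning $A = A_N\to\infty$ slowly (or keeping $A$ fixed and exploiting $R = R_N\to\infty$) while sending $N\to\infty$ and $t_N\to 0$ at compatible rates makes this blow up faster than $\eps^{-1}$, and an elementary computation shows the perturbation norm still goes to $0$ — this simultaneous balancing of the parameters $(N, R, A, t_N)$ is the main obstacle and the place where the precise exponents matter. For the remainder, one proves by induction a bound of the form $\|u_j(t)\|_{H^\sigma}\lesssim C^j t^{?} (\text{small})^j$ using a product estimate for $H^\sigma\cap(\text{low-frequency }L^2)$ adapted to $\M = \R$ or $\T$ (on $\R$ one works in a slightly subcritical space or uses frequency-localized bilinear estimates; on $\T$ the sums are discrete and easier), so that the series converges on $[0,t_N]$ and the tail is dominated by $\|u_k(t_N)\|_{H^\sigma}$. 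Combining the lower bound on $u_k$, the smallness of $u_1$ at frequency measured in $H^\sigma$, and the tail estimate yields $\|u(t_N)\|_{H^\sigma} \gtrsim \|u_k(t_N)\|_{H^\sigma} > \eps^{-1}$, while $\|(u(0),\dt u(0)) - (u_0,u_1)\|_{\H^s} = \|(\phi_N,\psi_N)\|_{\H^s} < \eps$, which is exactly \eqref{EQ:NIwithILOR}. The case $\sigma < s$ (infinite loss) comes for free since a lower bound in $H^\sigma$ for smaller $\sigma$ is only easier and the same data works for all $\sigma$ simultaneously by taking $N$ large depending on $\sigma$.
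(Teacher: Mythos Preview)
Your overall architecture is right --- Duhamel formulation, power-series expansion indexed by trees, isolate the first nonlinear Picard iterate as the dominant term, control the tail in a Banach algebra --- and matches the paper. But the \emph{mechanism} you propose for the lower bound is the wrong one for negative regularity, and as written the argument would not close.

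You arrange the input frequencies so that the $k$-fold sums ``pile up constructively near frequency $\sim N$'', i.e.\ a high-to-high interaction. For $s<0$ this cannot produce inflation: the weight $\jb{\xi}^s$ is \emph{small} at $|\xi|\sim N$, so output at high frequency is just as cheap in $H^s$ (or $H^\sigma$) as the input was, and the extra $t_N^2$ factor from the Duhamel integral only makes things smaller. The correct mechanism, and the one the paper uses, is \emph{high-to-low} transfer: one takes $\widehat{\phi_N}=R\,\chi_\Omega$ with $\Omega$ a fixed number of intervals of width $A$ centred at $\{\pm N,\pm 2N\}$, so that among the $k$-fold frequency sums there are many tuples with $\eta_1+\cdots+\eta_k=0$. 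Those tuples deposit the output of $\Xi_1(\vec\phi_N)$ near $\xi=0$, where $\jb{\xi}^s\sim 1$ is large relative to $\jb{N}^s$. Concretely one gets $\|\Xi_1(\vec\phi_N)(T)\|_{H^s}\gtrsim T^2 R^k A^{k-\frac12+s}$ while $\|\vec\phi_N\|_{\H^s}\sim R N^s A^{1/2}$, and the gain is the factor $N^{-s}\gg 1$ between input and output. The multiplier $\lambda(\xi)^2=\xi^2/\jb{\xi}^2$ in the Duhamel operator does vanish at $\xi=0$, but since the output lives on $|\xi|\lesssim A$ with $A$ bounded away from $0$, this costs only a harmless constant.

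Two related points then fall into place. First, with the high-to-low mechanism one may take $A$ \emph{constant} (the paper uses $A=10$), which makes the parameter-balancing you flag as ``the main obstacle'' essentially a two-parameter problem in $(R,T)$ versus $N$; the paper's choice $R=N^{-s-\delta}$, $T=N^{\frac{k-1}{2}(s+\delta/2)}$ works. Second, your last sentence has the monotonicity backwards: for $\sigma<s$ one has $\|f\|_{H^\sigma}\le\|f\|_{H^s}$, so lower bounds in $H^\sigma$ are \emph{harder}, not easier. The reason infinite loss is free here is precisely that the inflating piece lives at bounded frequency $|\xi|\lesssim A=10$, so its $H^\sigma$ and $H^s$ norms are comparable for every fixed $\sigma$; in your high-output-frequency picture this would fail.
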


Once Theorem \ref{THM:main} is proved, Theorem \ref{THM:mainLinfty} follows as a corollary. See Remark~\ref{REM:NIinWs2infty}. In fact, we first prove the usual norm inflation at general initial data, namely we prove \eqref{EQ:NIwithILOR} with $\s = s$. Then, the loss of regularity follows with a slight modification. See Remark~\ref{REM:ILOR}. Note also that, in a similar manner, Theorem \ref{THM:main} implies the discontinuity of the solution map in negative Sobolev spaces.


There are several ways to prove norm inflation results. Christ-Colliander-Tao \cite{CCT} first introduced norm inflation with a method based on a dispersionless ODE approach. See also \cite{BTz1, Xia}. On the other hand, Carles with his collaborators in \cite{AC, BC, CK} proved norm inflation with infinite loss of regularity results for nonlinear Schr\"odinger equations (NLS) by using geometric optics.


Our strategy is to follow a Fourier analytic argument that originated from the abstract work of Bejenaru-Tao \cite{BT} on quadratic nonlinear Schr\"odinger equation. Their idea was to expand the solution into a power series and to exploit the {\it high-to-low energy transfer} in one of the terms to prove discontinuity of the solution map. This approach was refined later on by  Iwabuchi-Ogawa~\cite{IO} who used it to extend the ill-posedness result in \cite{BT} into a norm inflation result. Kishimoto~\cite{Kis} then further developed these methods to prove norm inflation for the nonlinear Schr\"odinger equation. Meanwhile, Oh \cite{Oh} refined the argument of \cite{IO} in the context of cubic NLS by introducing a way to index the power series by trees, and to estimate each term separately. Forlano-Okamoto \cite{FO} proved afterwards norm inflation with an approach inspired by \cite{Oh} for nonlinear wave equations (NLW) in Sobolev spaces of negative regularities, and we use the same reasoning in our proof. For other papers with similar argument, the interested reader might turn to \cite{BH2, COW, CP, OOT, Ok, WZ}. See also \cite{Chevyrev} for an implementation of this method in probabilistic settings.




One of the key ingredients to our proof is the use of the Wiener algebra $\FL^1 (\M)$, which we define now. Given $\M = \R$ or $\T$, let $\widehat{\M}$ denote the Pontryagin dual of $\M$, i.e.
\begin{equation}
\label{DEF:PontryaginDual}
\widehat{\M} = 
\begin{cases}
\R & \text{if} \quad \M = \R, \\
\Z & \text{if} \quad \M = \T.
\end{cases}
\end{equation}

\noi Note that, when $\widehat{\M} = \Z$, we endow it with the counting measure. We can then define the following Fourier-Lebesgue spaces:

\begin{definition}[Fourier-Lebesgue spaces] \rm
\label{DEF:FLspaces}
For $s \in \R$ and $p \geq 1$, we define the Fourier-Lebesgue space $\FL^{s,p} (\M)$ as the completion of the Schwartz class of functions $\mathcal{S} ( \M)$ with respect to the norm
$$
\| f \|_{\FL^{s,p}(\M)} = \| \jb{\xi}^s \widehat{f}(\xi) \|_{L^p_\xi (\widehat{\M})},
$$

\noi where $\jb{\xi} \coloneqq (1 + \abs{\xi}^2 )^{\frac 12}$.
\end{definition}

\noi In particular, $\FL^{0,1}(\M)$ corresponds to the Wiener algebra, and its algebra property allows us to prove easily that \eqref{imBq} is analytically locally well-posed in $\FL^{0,1}(\M)$.

Another major point of our proof is the following power series expansion of a solution $u$ to \eqref{imBq} with $(u, \dt u ) |_{t=0} = \vec{u}_0$:
$$
u = \sum^\infty_{j=0} \Xi_j (\vec{u}_0 ),
$$

\noi where $\Xi_j (\vec{u}_0 )$ denotes a multilinear term in $\vec{u}_0$ of degree $(k-1)j + 1$ (for nonlinearity $f(u) = u^k$). 
More precisely, these multilinear terms are exactly the successive terms of a Picard iteration expansion. See Section \ref{PowerSeries}. Then, by explicit computation we show that $\Xi_1(\vec{u}_0)$ grows rapidly in a short time, achieving the desired growth, while we control the other terms. See Section \ref{Sec3}.
\begin{remark} \rm
In \cite{WC, WC2}, Wang-Chen studied the {\it multi-dimensional} generalized improved Boussinesq equation
\begin{equation}
\begin{cases}\label{generalizedimBq}
\dt^{2}u-\Dl u - \dt^2 \Dl u = \Dl \left(f(u) \right) \\
(u,\dt u)|_{t = 0} = (u_0,u_1), 
\end{cases}
\qquad ( t, x) \in [0, +\infty) \times \R^d, 
\end{equation}

\noi which corresponds essentially to the $d$-dimensional form of \eqref{imBq}, for $d \geq 1$. More precisely, they proved this problem is locally well-posed in $W^{2,p}\cap L^{\infty}$, for any $1 \leq p \leq \infty$, and in $H^s$ for $s \geq \frac d2$. They also showed global well-posedness under additional conditions. We claim that norm inflation with infinite loss of regularity also applies for this problem, for any dimension $d \geq 1$ and $s < 0$. See Remark \ref{RK:ProofGenIBq} for more details.
\end{remark}

\begin{remark} \rm
\label{REM:NIforFLMW}
\cite{BH} used this method to extend the result of Forlano-Okamoto~\cite{FO} and prove infinite loss of regularity for the nonlinear wave equation in some Fourier-Lebesgue, modulation and Wiener amalgam spaces\footnote{However, we point out that, while Forlano-Okamoto did not state it, their argument implies infinite loss of regularity.}. We claim that we can prove the same result for equation \eqref{imBq}. See Appendix \ref{appendixA} for more details.
\end{remark}



\section{Power series extension}
\label{PowerSeries}

In this section, we prove \eqref{imBq} with nonlinearity $f(u) = u^k$ is well-posed in the Wiener algebra, and it can be expanded into a power series. First, let us fix some notations. We define 
$$
\FLv^{s,p}(\M) \coloneqq \FL^{s,p}(\M) \times \FL^{s,p}(\M)
$$

\noi and, for more clarity, we denote $\FL^{p}(\M) \coloneqq \FL^{0, p}(\M)$ and $\FLv^{p}(\M) \coloneqq \FLv^{0,p}(\M)$.

We denote $S(t)$ the linear propagator:
\begin{equation}
\label{EQ:linearOp}
S(t)(u,v) = \cos\left(t P(D)\right) u +  \frac{\sin\left(tP(D)\right)}{P(D)} v,
\end{equation}

\noi with $P(D) \coloneqq \frac{\abs{\nb}}{\jb{\nb}}$. Namely, for any $\xi \in \M$,
$$
\F \left[P(D)f\right](\xi) = \frac{\abs{\xi}}{( 1 + \xi^2)^{1/2}} \widehat{f}(\xi) \eqqcolon \ld(\xi) \widehat{f}(\xi).
$$

\noi We also denote $\I_k$ the multilinear Duhamel operator.
\begin{equation}
\label{Eq:DuhamelOp}
\I_k(u_1, \dots, u_k)(t) = \int^{t}_0  \sin\left((t-t')P(D)\right) P(D) \prod^k_{j = 1} u_j (t') \mathrm{d}t'.
\end{equation}

\noi This gives the following Duhamel formulation for \eqref{imBq}
\begin{equation}
\label{Eq:duhamelP}
u(t) = S(t) (u_0, u_1) + \I_k (u).
\end{equation}

\noi Note that, in the aforementioned formula, we used the short-hand notation $\I_k (u) \coloneqq \I_k(u, \dots, u)$.

To index the power series we intend to create, we need the following tree structure:

\begin{definition}[$k$-ary trees]
\label{DEF:Trees}
\rm 
\begin{enumerate}
	\item Given a set $\TT$ with partial order $\leq$, we say that $b \in \TT$ with $b \leq a$ and $b \ne a$ is a child of $a \in \TT$, if  $b\leq c \leq a$ implies either $c = a$ or $c = b$. If the latter condition holds, we also say that $a$ is the parent of $b$.

	\item A tree $\TT$ is a finite partially ordered set,  satisfying the following properties\footnote{We do not identify two trees even if there is an order-preserving bijection between them.}:
\begin{itemize}
	\item Let $a_1, a_2, a_3, a_4 \in \TT$. If $a_4 \leq a_2 \leq a_1$ and  $a_4 \leq a_3 \leq a_1$, then we have $a_2\leq a_3$ or $a_3 \leq a_2$,

	\item A node $a\in \TT$ is called terminal, if it has no child. A non-terminal node $a\in \TT$, for $\TT$ a $k$-ary tree, is a node with exactly $k$ children, 

	\item There exists a maximal element $r \in \TT$, called the root node, such that $a \leq r$ for all $a \in \TT$, 

	\item $\TT$ consists of the disjoint union of $\TT^0$ and $\TT^\infty$, where $\TT^0$ and $\TT^\infty$ denote the collections of non-terminal nodes and terminal nodes, respectively.
\end{itemize}

	\item Let $\BT(j)$ denote the set of all trees of $j$-th generation, namely trees with $j$ non-terminal nodes.
\end{enumerate}
\end{definition}

Note that a tree of $j$-th generation $\TT \in \BT(j)$ has $kj + 1$ nodes. Indeed, it has $j$ non-terminal nodes by definition and an induction argument shows it has $(k-1)j + 1$ terminal nodes. Besides, we have the following bound on the number of trees of $j$-th generation:

\begin{lemma}
\label{LEM:NumberOfTrees}
There exists a constant $C_0 > 0$, depending only on $k$, such that, for any $j \in \NB$, we have
\begin{equation}
\label{EQ:NumberOfTrees}
\abs{\BT (j)} \leq \frac{C^j_0}{(1 + j)^2 } \leq C^j_0
\end{equation}
\end{lemma}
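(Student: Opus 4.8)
The plan is to reduce the counting of $k$-ary trees of $j$-th generation to the enumeration of rooted $k$-ary plane trees, whose cardinality is given by a Fuss--Catalan number, and then to apply the known exponential asymptotics for that sequence. Concretely, a tree $\TT \in \BT(j)$ in the sense of Definition~\ref{DEF:Trees} is, after forgetting the labels but keeping the left-to-right ordering implicitly fixed by the construction (recall the footnote: we do not identify order-isomorphic trees, so in effect we are counting plane trees), a rooted ordered tree in which every internal node has exactly $k$ children and there are exactly $j$ internal nodes, hence $kj+1$ nodes total and $(k-1)j+1$ leaves. The number of such trees is the Fuss--Catalan number
$$
\abs{\BT(j)} = \frac{1}{(k-1)j + 1}\binom{kj}{j}.
$$
First I would establish this identity, either by the standard generating-function argument (if $B(x) = \sum_j \abs{\BT(j)} x^j$, then $B = 1 + x B^k$ by decomposing at the root, and Lagrange inversion yields the closed form) or by citing it directly.

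Next I would extract the exponential bound from this closed form. Using $\binom{kj}{j} \leq \frac{(kj)^{kj}}{j^j ((k-1)j)^{(k-1)j}} = \left( \frac{k^k}{(k-1)^{k-1}} \right)^j$ (for instance via Stirling's formula, or crudely via $\binom{n}{m} \le n^n m^{-m}(n-m)^{-(n-m)}$), one gets
$$
\abs{\BT(j)} \leq \frac{1}{(k-1)j+1} \left( \frac{k^k}{(k-1)^{k-1}} \right)^j.
$$
Setting $C_0 \coloneqq \frac{k^k}{(k-1)^{k-1}}$ — which depends only on $k$ — and noting that the prefactor $\frac{1}{(k-1)j+1}$ is bounded by $\frac{C}{(1+j)^2}$ only if we absorb one more power of $j$; so more precisely I would use the sharper Stirling estimate $\binom{kj}{j} \leq \frac{C}{\sqrt{j}} C_0^j$ for an absolute constant, and then observe $\frac{1}{(k-1)j+1} \cdot \frac{1}{\sqrt{j}} \lesssim \frac{1}{(1+j)^{3/2}}$, which is more than enough to dominate $\frac{1}{(1+j)^2}$ after possibly enlarging $C_0$ by a bounded factor. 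Since \eqref{EQ:NumberOfTrees} only asks for $\abs{\BT(j)} \le C_0^j/(1+j)^2$, any such polynomial-decay refinement of the crude bound suffices; alternatively, one can simply enlarge $C_0$ slightly (e.g. replace $\frac{k^k}{(k-1)^{k-1}}$ by $2\cdot\frac{k^k}{(k-1)^{k-1}}$) and check the inequality holds for all $j \ge 1$ by hand for small $j$ and by the decay of the Fuss--Catalan ratio for large $j$.

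Alternatively — and this is probably the cleanest route to avoid Stirling bookkeeping — I would prove the bound directly by induction on $j$ using the root-decomposition recursion $\abs{\BT(j)} = \sum_{j_1 + \cdots + j_k = j-1} \prod_{i=1}^k \abs{\BT(j_i)}$, feeding in the inductive hypothesis $\abs{\BT(j_i)} \le C_0^{j_i}/(1+j_i)^2$ and checking that the convolution $\sum_{j_1+\cdots+j_k = j-1} \prod_i (1+j_i)^{-2}$ is bounded by $C_0 \cdot (1+j)^{-2}$ for $C_0$ chosen large enough depending on $k$ (this uses the summability of $\sum_j (1+j)^{-2}$). I expect the main obstacle to be exactly this last point: verifying that the $k$-fold convolution of the sequence $(1+j)^{-2}$ still decays like $(1+j)^{-2}$ up to a constant. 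This is a standard fact (the sequence $(1+j)^{-2}$ is "convolution-admissible" in the sense that its $k$-fold convolution is comparable to itself), and one proves it by splitting the sum according to whether all $j_i \le (j-1)/k$ — impossible unless $j$ is small — so that some $j_{i_0} \ge (j-1)/k \gtrsim j$, bounding that factor by $\lesssim (1+j)^{-2}$, and summing the remaining $k-1$ factors freely to get a constant. Keeping careful track of the constant (which blows up with $k$ but is harmless since $k$ is fixed) is the only genuinely fiddly part; everything else is routine.
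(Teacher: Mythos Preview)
Your proposal is correct, and in fact your ``alternative'' inductive route is exactly the argument the paper gives: decompose at the root to get $\abs{\BT(j)} = \sum_{j_1+\cdots+j_k=j-1}\prod_i \abs{\BT(j_i)}$, feed in the hypothesis $\abs{\BT(j_i)}\le C_0^{j_i}/(1+j_i)^2$, use that some $1+j_{i_0}\ge (1+j)/k$ to extract the factor $(1+j)^{-2}$ (paying $k^2$), and sum the remaining $k-1$ indices freely to get a finite constant which one then takes as $C_0$. So on that side you have reproduced the paper's proof.

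Your first route via the Fuss--Catalan closed form $\abs{\BT(j)}=\frac{1}{(k-1)j+1}\binom{kj}{j}$ and Stirling is a genuinely different argument. It has the advantage of yielding an explicit near-optimal base $C_0 \approx k^k/(k-1)^{k-1}$, whereas the paper's inductive proof gives the less transparent $C_0 = k^2\big(\sum_{m\ge 0}(1+m)^{-2}\big)^{k-1}$. The cost is that you need an external identity and Stirling bookkeeping, and you must be a little careful with the step where $(1+j)^{-3/2}$ is traded for $(1+j)^{-2}$: the prefactor $(1+j)^{1/2}$ is \emph{not} absorbed by a bounded multiplicative enlargement of $C_0$ but by replacing $C_0$ with $(1+\eps)C_0$ for any $\eps>0$ (since $(1+j)^{1/2}\le (1+\eps)^j$ eventually) and checking small $j$ by hand --- which you do note. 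Either approach is fine; the inductive one is self-contained and matches the paper.
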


The following proof is an adaptation of the one in \cite{Oh} for ternary trees, we include it for completeness.

\begin{proof}
We prove \eqref{EQ:NumberOfTrees} by induction. Note that the right inequality is immediate, so we only have to prove
$$
\abs{\BT (j)} \leq \frac{C^j_0}{(1 + j)^2 }
$$

\noi for any $j \geq 0$. Observe first that
$$
\abs{\BT (0)} = \abs{\BT (1)} =1.
$$

\noi Then, fix $j \geq 2$. Assume equation \eqref{EQ:NumberOfTrees} holds for any $0 \leq m \leq j-1$ and take $\TT \in \BT(j)$. By Definition \ref{DEF:Trees}, there exist $k$ trees $\TT_1 \in \BT(j_1), \dots, \TT_k \in \BT(j_k )$, with $j_1 + \cdots + j_k = j -1$, such that $\TT$ is the tree consisting of a root node whose children are $\TT_1, \dots, \TT_k$. Thus, applying the induction hypothesis we get
\begin{align*}
\abs{\BT (j)} & = \sum_{\substack{j_1 + \cdots + j_k = j-1, \\ j_1, \cdots, j_k \geq 0}} \abs{\BT (j_1)} \times \cdots \times \abs{\BT (j_k)} \\
	& \leq \sum_{\substack{j_1 + \cdots + j_k = j-1, \\ j_1, \cdots,  j_k \geq 0}} \frac{C^{j_1}_0}{(1 + j_1)^2 } \times \cdots \times \frac{C^{j_k}_0}{(1 + j_k )^2 } \times \frac{(1+j)^2}{(1+j)^2} \\ 
	& \leq  k^2 \left( \sum_{j_2, \cdots j_k \geq 0} \frac{1}{(1 + j_2)^2 \times \cdots \times (1 + j_k )^2 } \right)  \frac{C^{j-1}_0}{(1 + j)^2 }
\end{align*}

\noi where we used $k \max \left( 1+ j_1 , \dots, 1 + j_k \right) \geq 1+j$ and rearranged the sum so the maximum is reached for $j_1$. Then, choosing $C_0 =  k^2 \sum_{j_2, \cdots j_k \geq 0} \frac{1}{(1 + j_2)^2 \cdots (1 + j_k )^2 } < \infty$ ends the induction.

\end{proof}

From now on, for any $\vec{\phi} \in \FLv^1$, we will associate to any tree $\TT \in \BT (j)$, $j \geq 0$, a space-time distribution $\Psi (\TT) (\vec{\phi}) \in \mathcal{D}' \left( (-T,T) \times \M \right)$ as follows:
\begin{itemize}
	\item replace a non-terminal node by Duhamel integral operator $\I_k$ with its $k$ arguments being the children of the node,
	\item replace a terminal node by $S(t) \vec{\phi}$.
\end{itemize}

\noi For any $j \geq 0$ and $\vec{\phi} \in \FLv^1$, we also define $\Xi_j$ as follows: 

\begin{equation}
\label{DEF:Xi_j}
\Xi_j (\vec{\phi}) = \sum_{\TT \in \BT (j)} \Psi(\TT)( \vec{\phi} ).
\end{equation}

\noi For instance, we have the two following terms:
$$
 \Xi_0 (\vec{\phi}) = S(t) \vec{\phi}, \quad \text{ and } \quad  \Xi_1 (\vec{\phi}) = \I_k ( S(t) \vec{\phi}, \cdots,  S(t) \vec{\phi}).
$$

\noi Let us now state some basic multilinear estimates that will be useful both for norm inflation and for local well-posedness in the Wiener algebra $\FL^1$.

\begin{lemma}
\label{LEM:TreesEstFL}
There exists $C > 0$ such that, for any $\vec{\phi} \in \FLv^1$, $j \in \NB$ and $0 < T \leq 1$, we have
\begin{equation}
\label{EQ:TreesEst1}
\big\| \Xi_j (\vec{\phi}) (T) \big\|_{\FL^1} \leq C^j T^{2j} \| \vec{\phi} \|^{(k-1)j+1}_{\FLv^1}
\end{equation}

\noi Moreover, if $j \geq 1$ and $\vec{\psi} \in \FLv^1 \cap \H^0 \left(\R\right)$,
\begin{equation}
\label{EQ:TreesEstInf}
\left\| \Xi_j (\vec{\psi} ) (T) \right\|_{\FL^\infty} \leq C^j T^{2j} \| \vec{\psi} \|^{(k-1)j-1}_{\FLv^1} \| \vec{\psi} \|^{2}_{\H^0}. 
\end{equation}
\end{lemma}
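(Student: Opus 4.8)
The plan is to establish both estimates \emph{tree by tree}, i.e.\ to prove the bounds with $\Xi_j(\vec\phi)$ replaced by a single $\Psi(\TT)(\vec\phi)$, $\TT\in\BT(j)$, with a constant independent of $\TT$, and then to sum over $\TT\in\BT(j)$ and invoke Lemma~\ref{LEM:NumberOfTrees}; in this way all of the combinatorics is deferred to \eqref{EQ:NumberOfTrees}, whereas expanding $\Xi_j$ directly as a sum over compositions $j_1+\cdots+j_k=j-1$ would reintroduce polynomially growing factors at each generation. Two elementary facts are used throughout: first, the Fourier multipliers in $S(t)$ and $\I_k$ are harmless for $0<t\le1$, since $|\cos(t\lambda(\xi))|\le1$, $|\sin(t\lambda(\xi))/\lambda(\xi)|\le t$ and $|\sin((t-t')\lambda(\xi))\lambda(\xi)|\le|t-t'|\lambda(\xi)^2\le|t-t'|$, using $0\le\lambda(\xi)\le1$; second, on the Fourier side products become convolutions, so the Wiener algebra property $\|fg\|_{\FL^1}\le\|f\|_{\FL^1}\|g\|_{\FL^1}$, Young's inequality $\|fg\|_{\FL^2}\lesssim\|f\|_{\FL^2}\|g\|_{\FL^1}$, and $\|\widehat{fg}\|_{L^\infty}\lesssim\|f\|_{\FL^2}\|g\|_{\FL^2}$ all hold (the implicit constants, after at most $k-1$ iterations, depending only on $k$).

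For \eqref{EQ:TreesEst1} I would induct on $j$, proving $\|\Psi(\TT)(\vec\phi)(t)\|_{\FL^1}\le c^j t^{2j}\|\vec\phi\|_{\FLv^1}^{(k-1)j+1}$ for all $\TT\in\BT(j)$ and $0<t\le1$; the base case $j=0$ is $\Psi(\TT)(t)=S(t)\vec\phi$ and is immediate. For $j\ge1$, write $\TT$ as a root with $k$ children $\TT_i\in\BT(j_i)$, $j_1+\cdots+j_k=j-1$, so $\Psi(\TT)(t)=\I_k(\Psi(\TT_1),\dots,\Psi(\TT_k))(t)$; bounding the Duhamel multiplier by $|t-t'|$, using the $\FL^1$-algebra property and the induction hypothesis gives
\[
\|\Psi(\TT)(\vec\phi)(t)\|_{\FL^1}\le\int_0^t|t-t'|\prod_{i=1}^k c^{j_i}(t')^{2j_i}\|\vec\phi\|_{\FLv^1}^{(k-1)j_i+1}\,dt'\le c^{j-1}t^{2j}\|\vec\phi\|_{\FLv^1}^{(k-1)j+1},
\]
since $\sum_i(k-1)j_i+k=(k-1)j+1$ and $\int_0^t|t-t'|(t')^{2(j-1)}\,dt'\le t^{2j}$; this closes the induction (e.g.\ with $c=1$). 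Summing over $\TT\in\BT(j)$ and using \eqref{EQ:NumberOfTrees} gives \eqref{EQ:TreesEst1} with $C=C_0$.

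For \eqref{EQ:TreesEstInf} I would first prove, by the same induction, the auxiliary tree-by-tree bound $\|\Psi(\TT)(\vec\psi)(t)\|_{\FL^2}\le c^j t^{2j}\|\vec\psi\|_{\FLv^1}^{(k-1)j}\|\vec\psi\|_{\H^0}$ for $\vec\psi\in\FLv^1\cap\H^0(\R)$, the base case using $\|S(t)\vec\psi\|_{\FL^2}\simeq\|S(t)\vec\psi\|_{L^2}\lesssim\|\vec\psi\|_{\H^0}$ and the inductive step estimating $\widehat{\Psi(\TT_1)}*\cdots*\widehat{\Psi(\TT_k)}$ in $L^2$ by placing $\Psi(\TT_1)$ in $\FL^2$ and $\Psi(\TT_2),\dots,\Psi(\TT_k)$ in $\FL^1$ via \eqref{EQ:TreesEst1} (the exponents combining to $(k-1)j$ and $1$). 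Then, for $j\ge1$ and $\TT\in\BT(j)$, the root is non-terminal, $\Psi(\TT)=\I_k(\Psi(\TT_1),\dots,\Psi(\TT_k))$, and I would bound $\|\widehat{\Psi(\TT)}(t)\|_{L^\infty}$ by estimating $\widehat{\Psi(\TT_1)}*\widehat{\Psi(\TT_2)}$ in $L^\infty$ through Cauchy--Schwarz (each of the two factors in $\FL^2$) and $\widehat{\Psi(\TT_3)},\dots,\widehat{\Psi(\TT_k)}$ in $L^1$ --- this is the one place $k\ge2$ is needed. Inserting the $\FL^2$-bound for $\TT_1,\TT_2$ and \eqref{EQ:TreesEst1} for $\TT_3,\dots,\TT_k$, the $\FLv^1$-exponents add up to $(k-1)(j-1)+(k-2)=(k-1)j-1$ and the $\H^0$-exponents to $2$, while each of the $j$ Duhamel integrals contributes a factor $t^2$; summing over $\TT\in\BT(j)$ via \eqref{EQ:NumberOfTrees} yields \eqref{EQ:TreesEstInf}. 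The only genuine subtlety is the bookkeeping just described --- isolating exactly two terminal branches to carry the $\H^0$-norm, and keeping the per-tree constant free of combinatorial factors --- the estimates themselves being routine once the convolution/multiplier structure is in place.
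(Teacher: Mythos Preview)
Your proposal is correct and follows essentially the same approach as the paper: per-tree bounds proved by induction on the generation, using the boundedness of $S(t)$, the pointwise multiplier bound $|\sin((t-t')\lambda)\lambda|\le|t-t'|$, and the $\FL^1$-algebra property, then summing over $\TT\in\BT(j)$ via Lemma~\ref{LEM:NumberOfTrees}. The paper is simply terser---it states that \eqref{EQ:TreesEst1} ``follows from an induction argument and Lemma~\ref{LEM:NumberOfTrees}'' and that ``Young's inequality and a similar argument give \eqref{EQ:TreesEstInf}''---whereas you spell out the intermediate $\FL^2$ tree-by-tree estimate; this is a natural elaboration of the same idea rather than a different route.
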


\begin{proof}
Let $\TT \in \BT (j)$. Using the same tree structure argument as for Lemma \ref{LEM:NumberOfTrees}, there exist $k$ trees $\TT_1 \in \BT(j_1), \dots,  \TT_k \in\BT(j_k)$, with $j_1 + \cdots + j_k = j-1$, such that the root nodes of $\TT_1, \dots, \TT_1$ are the children of the root node of $\TT$. Thus, we can write
$$
\Psi(\TT) (\vec{\phi}) = \I_k (\Psi(\TT_1) (\vec{\phi}), \cdots , \Psi(\TT_k) (\vec{\phi}) ),
$$

\noi and $\Psi(\TT) (\vec{\phi})$ consists essentially of $j = \abs{\TT^0}$ iterations of the Duhamel operator $\I_k$ with $(k-1)j + 1$ times the term $S(t)\vec{\phi}$ as arguments. 

Meanwhile, we deduce from \eqref{EQ:linearOp} that $S(t)$ is unitary in $\FL^1$ for any $ 0 < T \leq 1$, and, since $\abs{\sin y} \leq \abs{y}$ for any $y \in \R$ and $\ld(\xi) \leq 1$ for any $\xi \in \R$, the algebra property of $\FL^1$ gives
\begin{equation}
\label{EQ:EstDuhamelOp}
\| \I_k [u_1, \dots, u_k] \|_{C_T \FL^1} \leq \int^T_0 \abs{T-t'} \abs{\ld(\xi)}^2 \| u_1 \cdots u_k \|_{C_T \FL^1} \mathrm{d}t' \leq \frac 12 T^2 \prod^k_{j=1}  \| u_j \|_{C_T \FL^1}
\end{equation}

\noi Hence, \eqref{EQ:TreesEst1} follows from an induction argument and Lemma \ref{LEM:NumberOfTrees}. Besides, Young's inequality and a similar argument give \eqref{EQ:TreesEstInf}.

\end{proof}

Let us now use Lemma \ref{LEM:TreesEstFL} to prove local well-posedness of \eqref{imBq} in the Wiener algebra and to justify the power series expansion.

\begin{lemma}
\label{LEM:ExistenceOfSolution}
Let $M > 0$. Then, for any time $T$ such that $0 < T \ll \min(M^{-\frac{k-1}{2} }, 1)$ and $\vec{u}_0 \in \FLv^1$ with $\| \vec{u}_0 \|_{\FLv^1} \leq M$, 
\begin{enumerate}
	\item there exists a unique solution $u \in \C ([0,T]; \FL^1 (\R))$ to \eqref{imBq} satisfying $(u, \dt u) = \vec{u}_0$.
	\item Moreover, u can be expressed as
\begin{equation}
\label{EQ:PowerSeriesSol}
u = \sum_{j = 0}^\infty \Xi_j (\vec{u}_0) = \sum_{j = 0}^\infty \sum_{\TT \in \BT (j)} \Psi(\TT)(\vec{u}_0)
\end{equation}
\end{enumerate}

\end{lemma}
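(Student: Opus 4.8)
The strategy is a standard fixed-point/contraction argument in the Banach space $X_T \coloneqq C([0,T]; \FL^1(\R))$, using that $\FL^1$ is a Banach algebra, combined with the multilinear bounds of Lemma~\ref{LEM:TreesEstFL} to identify the limit of the Picard iteration with the tree series \eqref{EQ:PowerSeriesSol}. First I would set up the solution map $\Gamma \vec{u}_0 \colon u \mapsto S(t)\vec{u}_0 + \I_k(u)$ coming from the Duhamel formulation \eqref{Eq:duhamelP}, and work on the closed ball $B_R \subset X_T$ of radius $R \coloneqq 2M$ centered at $0$. Since $S(t)$ is unitary on $\FL^1$ for $0 < T \le 1$ (as noted in the proof of Lemma~\ref{LEM:TreesEstFL}), we have $\|S(t)\vec{u}_0\|_{X_T} \le M$, and the estimate \eqref{EQ:EstDuhamelOp} gives $\|\I_k(u)\|_{X_T} \le \frac12 T^2 \|u\|_{X_T}^k \le \frac12 T^2 R^k$. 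For the difference, $\I_k(u) - \I_k(v)$ telescopes into $k$ terms each estimated by $\frac12 T^2 R^{k-1}\|u-v\|_{X_T}$ via the algebra property, so $\Gamma\vec{u}_0$ maps $B_R$ into itself and is a contraction provided $T^2 R^{k-1} \ll 1$, i.e. $T \ll \min(M^{-(k-1)/2}, 1)$, which is exactly the hypothesis. The Banach fixed point theorem then yields the unique solution $u \in B_R$; uniqueness in all of $X_T$ follows by a standard continuity-in-time argument (or by shrinking $T$ and iterating), giving part~(1).

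For part~(2), I would run the Picard iteration $u^{(0)} = S(t)\vec{u}_0$, $u^{(n+1)} = S(t)\vec{u}_0 + \I_k(u^{(n)})$, which converges to $u$ in $X_T$ by the contraction. The point is to show by induction on $n$ that $u^{(n)} = \sum_{j=0}^{n} \Xi_j(\vec{u}_0) + (\text{terms of Picard-order} > n)$, or more precisely that the multilinear terms of degree $(k-1)j+1$ appearing in $u^{(n)}$ for $j \le n$ are exactly $\Xi_j(\vec{u}_0) = \sum_{\TT \in \BT(j)} \Psi(\TT)(\vec{u}_0)$: expanding $\I_k\big(\sum_{j_1} \Xi_{j_1} + \cdots\big)$ multilinearly, the degree-$((k-1)j+1)$ piece is $\sum_{j_1 + \cdots + j_k = j-1} \I_k(\Xi_{j_1}, \dots, \Xi_{j_k})$, which by the tree recursion (a tree in $\BT(j)$ is a root with $k$ subtrees in $\BT(j_1), \dots, \BT(j_k)$, $\sum j_i = j-1$) equals precisely $\Xi_j(\vec{u}_0)$. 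Meanwhile, Lemma~\ref{LEM:TreesEstFL} gives $\sum_{j \ge 0} \|\Xi_j(\vec{u}_0)(t)\|_{\FL^1} \le \sum_{j\ge 0} C^j T^{2j} M^{(k-1)j+1} < \infty$ for $T$ small, so the series $\sum_j \Xi_j(\vec{u}_0)$ converges absolutely in $X_T$; passing to the limit in the Picard iteration, the limit $u$ coincides with $\sum_{j=0}^\infty \Xi_j(\vec{u}_0)$.

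The main obstacle — such as it is for a result of this type — is the bookkeeping in part~(2): one must carefully verify that the multilinear expansion of the Duhamel iterate, when organized by homogeneity degree, regroups exactly into the tree sum $\Xi_j$ with the correct combinatorial multiplicities (no trees missed, none double-counted), and that the rearrangement of the resulting (absolutely convergent) series is justified. This is where the convention in Definition~\ref{DEF:Trees} of not identifying order-isomorphic trees is essential, since it makes the map $\TT \mapsto \Psi(\TT)$ track each distinct term of the Picard expansion. The analytic estimates themselves are immediate from Lemma~\ref{LEM:TreesEstFL} and the algebra property of $\FL^1$, and the smallness condition on $T$ is dictated by the $T^{2j}M^{(k-1)j}$ scaling in \eqref{EQ:TreesEst1}.
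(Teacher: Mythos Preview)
Your argument for part~(1) is essentially identical to the paper's: contraction of $\G$ on the ball $B_{2M}\subset C([0,T];\FL^1)$ using unitarity of $S(t)$ and \eqref{EQ:EstDuhamelOp}.

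For part~(2) your route differs from the paper's. The paper does \emph{not} track the Picard iterates $u^{(n)}$; instead it sets $U\coloneqq\sum_{j\ge0}\Xi_j(\vec u_0)$, notes that \eqref{EQ:TreesEst1} gives absolute convergence in $C_T\FL^1$, and then shows directly that $U$ is a fixed point of $\G$ by an $\eps/3$ argument: with $U_J\coloneqq\sum_{j\le J}\Xi_j$, one estimates $\|U-U_J\|$, $\|\G[U]-\G[U_J]\|$ (continuity of $\G$ on $B_{2M}$), and finally $\|U_J-\G[U_J]\|$, the latter using the tree recursion $\G[U_J]=\Xi_0+\sum_{0\le j_1,\dots,j_k\le J}\I_k(\Xi_{j_1},\dots,\Xi_{j_k})$ so that $U_J-\G[U_J]$ consists only of terms with $j_1+\cdots+j_k\ge J$, which are summable and small. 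Uniqueness then forces $U=u$. Your approach instead expands each $u^{(n)}$ and argues that the degree-$((k-1)j+1)$ piece stabilizes at $\Xi_j$ once $n\ge j$; this is correct, but you must also bound the residual $R_n\coloneqq u^{(n)}-\sum_{j\le n}\Xi_j$ (the trees of generation $>n$ but height $\le n$) and show $\|R_n\|_{C_T\FL^1}\to0$, which follows from the per-tree bound implicit in the proof of \eqref{EQ:TreesEst1} together with Lemma~\ref{LEM:NumberOfTrees}. The paper's direct fixed-point verification sidesteps this residual bookkeeping entirely, which is the main advantage; your Picard-iterate approach, on the other hand, makes the identification of $\Xi_j$ with the homogeneous components of the iteration more transparent.
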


\begin{proof}

Let us first prove our problem is locally well-posed in $\FL^1$. We define the functional $\G$ by
$$
\G [u] (t) \coloneqq S(t)\vec{u}_0 + \I_k (u)(t)
$$

\noi for any $t \in [0,T]$. Then, using the unitarity of $S(t)$ and \eqref{EQ:EstDuhamelOp}, we have
$$
\left\| \G[u] \right\|_{C_T \FL^1} \leq \| \vec{u}_0 \|_{\FLv^1} + \frac 12 T^2 \| u \|_{\FL^1}^k
$$

\noi Using the multilinearity of $\I_k$, we ensure, for $0 < T \leq 1$ such that $\frac 12 T^2 M^{k-1} \ll 1$, that $\G$ is a strict contraction on the ball
$$
B_{2M} \coloneqq \{ v \in C([0,T], \FL^1 (\M)), \| v \|_{C_T \FL^1} \leq 2M\}.
$$

\noi Then, the contraction mapping theorem and an a posteriori continuity argument proves the local well-posedness. Let us move onto the power series expansion. 

Fix $\eps >0$. We choose $0 < T \leq 1$ such that $\frac 12 T^2 M^{k-1} \ll 1$. Then, from \eqref{EQ:TreesEst1}, the sum in \eqref{EQ:PowerSeriesSol} converges absolutely in $C([0,T], \FL^1 (\M))$. Let us denote 
$$
U_J = \sum_{j = 0}^J \Xi_j (\vec{u}_0) \qquad \text{ and } \qquad U = \sum_{j = 0}^\infty \Xi_j (\vec{u}_0).
$$

\noi There exists $J_1 \geq 0$ such that 
\begin{equation}
\label{EQ:PfPowerSeries1}
\| U - U_J \|_{C_T \FL^1} < \frac \eps3
\end{equation}

\noi for any $J \geq J_1$. In particular, this implies that $U$ and $U_J$ belong to the ball $B_{2M}$. Then, by continuity of $\G$ as a map from $B_{2M}$ into itself, \eqref{EQ:PfPowerSeries1} implies there exists $J_2 \leq 0$ such that, for any $J \geq J_2$,
\begin{equation}
\label{EQ:PfPowerSeries2}
\| \G[U] - \G[U_J] \|_{C_T \FL^1} < \frac \eps3.
\end{equation}

\noi All that is left now is to estimate $U_J - \G[U_J]$. Fix an integer $J \geq 1$. Then, from the tree structure argument we already used in the proof of Lemma \ref{LEM:TreesEstFL}, we get
\begin{align*}
U_J - \G[U_J] & = \sum_{j = 1}^J \Xi_j (\vec{u}_0) - \sum_{0 \leq j_1, \dots, j_k \leq J} \I_k \left( \Xi_{j_1}(\vec{u}_0) , \cdots,  \Xi_{j_k} (\vec{u}_0) \right) \\
	& = - \sum^{kJ}_{l = J} \sum_{\substack{0 \leq j_1 , \dots, j_k \leq J, \\ j_1 + \cdots + j_k = l}} \I_k \left( \Xi_{j_1}(\vec{u}_0) , \cdots, \Xi_{j_k} (\vec{u}_0) \right).
\end{align*}

\noi Now a crude estimation of the sums, along with \eqref{EQ:EstDuhamelOp} and \eqref{EQ:TreesEst1}, give
\begin{align*}
\| U_J - \G[U_J] \|_{C_T \FL^1} & \leq \frac 12 T^2 \sum^{kJ}_{l = J} \  \sum_{\substack{0 \leq j_1 , \dots, j_k \leq J, \\ j_1 + \cdots + j_k = l}} \   \prod^k_{m=1} \| \Xi_{j_m}(\vec{u}_0) \|_{C_T \FL^1}  \\
	& \leq \frac 12 T^2 \sum^{kJ}_{l = J} \  \sum_{\substack{0 \leq j_1 , \dots,  j_k \leq J, \\ j_1 + \cdots + j_k = l}} \ \prod^k_{m= 1} C^{j_m} T^{2j_m} \| \vec{u}_0 \|^{(k-1)j_m + 1}_{\FLv^1} \\
	& \leq \frac 12 T^2 J^k M^k \sum^{\infty}_{l = J} (C T^2 M^{k-1})^l
\end{align*}

\noi Since we assumed $0 < T \ll \min(M^{-\frac{k-1}{2} }, 1)$, the sum converges and the right-hand side is bounded by $\frac 12 T^2 J^k M^k (C T^2 M^{k-1})^J$, which tends to $0$ as $J$ tends to infinity. Thus, there exists $J_3 \geq 1$ such that for every $J \geq J_3$,
\begin{equation}
\label{EQ:PfPowerSeries3}
\| U_J - \G[U_J] \|_{C_T \FL^1} < \frac \eps3.
\end{equation}

\noi Now, for any $J \geq \max(J_1, J_2, J_3)$, \eqref{EQ:PfPowerSeries1}, \eqref{EQ:PfPowerSeries2} and \eqref{EQ:PfPowerSeries3} imply
$$
\| U - \G[U] \|_{C_T \FL^1} \leq \| U - U_J \|_{C_T \FL^1} + \| U_J - \G[U_J] \|_{C_T \FL^1} + \| \G[U] - \G[U_J] \|_{C_T \FL^1}  < \eps.
$$

\noi Therefore, $U$ is a fixed point of $\G$ and this ends the proof by uniqueness.

\end{proof}

\section{Norm inflation for IBq}
\label{Sec3}

In this section, we present first the proof of Theorem \ref{THM:main}. Actually, our main goal is to prove the following proposition:

\begin{proposition}
\label{THM:main2}
Let $k \geq 2$ and $s < 0$. Fix $(u_0 , u_1) \in \H^s (\M)$. Then, given any $\eps > 0$, there exists a solution $u_\eps$ to \eqref{imBq} with nonlinearity $f(u) = u^k$ on $\M$ and $t_\eps \in (0, \eps)$ such that
$$
\| (u_\eps (0), \dt u_\eps (0)) - (u_0 , u_1) \|_{\H^s} < \eps \quad \text{ and } \quad \| u_\eps (t_\eps) \|_{H^s} > \eps^{-1}.
$$
\end{proposition}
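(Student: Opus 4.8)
The plan is to follow the Fourier analytic approach of Bejenaru--Tao as refined by Oh and Forlano--Okamoto. The idea is to construct an initial datum that is a small perturbation (in $\H^s$) of $(u_0, u_1)$ by adding a high-frequency, large-amplitude bump, and to show that the first Picard iterate $\Xi_1(\vec u_0)$ undergoes a high-to-low energy transfer that makes $\|u_\eps(t_\eps)\|_{H^s}$ enormous for a suitably chosen short time $t_\eps$, while all remaining terms $\sum_{j\geq 2}\Xi_j$ (and the linear evolution of the perturbation) stay negligible.

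\emph{Step 1: choice of initial data.} Fix $\eps>0$. We first regularize: replace $(u_0,u_1)$ by a frequency-truncated version $(u_0^{\sharp}, u_1^{\sharp})$ with $\vec u_0^{\sharp} \in \FLv^1$, at the cost of an $\H^s$ error much smaller than $\eps$; this is where the choice $s<0$ (so that high frequencies are cheap in $H^s$) enters. Then set $\vec\phi = \vec u_0^{\sharp} + \vec v_N$, where $\vec v_N$ is a single high-frequency wavepacket at frequency $\sim N$ with amplitude $\sim N^{-s}R$ for parameters $N\gg 1$ (large) and $R$ to be tuned. On $\M = \R$ one takes a bump of width $\sim 1$ centered at frequency $N$ (so the physical-space profile has size $\sim N^{-s}R$ on a unit ball); on $\M=\T$ one takes a pure frequency $N$. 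The perturbation satisfies $\|\vec v_N\|_{\H^s} \sim R$ in the periodic case (resp.\ $\sim R$ up to harmless constants on $\R$), so we will need $R = R(\eps,N) \to 0$ slowly, e.g.\ $R = (\log N)^{-\theta}$ for small $\theta>0$. Crucially $\|\vec\phi\|_{\FLv^1} \sim N^{-s}R + \|\vec u_0^\sharp\|_{\FLv^1} \sim N^{-s}R =: M$, which is large.

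\emph{Step 2: the main term.} Using the explicit form \eqref{Eq:DuhamelOp} of $\I_k$ and that $S(t)$ acts by the multipliers $\cos(t\ld(\xi))$, $\ld(\xi)^{-1}\sin(t\ld(\xi))$ with $\ld(\xi) = |\xi|/\jb{\xi} \to 1$ as $|\xi|\to\infty$, compute the Fourier transform of $\Xi_1(\vec\phi)(t)$. The $k$-fold product of the high-frequency packets produces, among other frequency outputs, a contribution near frequency $0$ (the $k$ frequencies $\sim N$ combining with alternating signs, e.g.\ from $\cos$ expanded into exponentials, to cancel): this is the high-to-low transfer. On this low-frequency output the time integral $\int_0^t \ld(\xi) \sin((t-t')\ld(\xi))\,(\text{product})(t')\,dt'$ behaves like $\sim t^2$ (since $\ld(\xi)\to 1$ and $\sin$ expansions give no oscillation in $t'$ on the resonant part), and the amplitude of the product is $\sim (N^{-s}R)^k$. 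Hence $\|\Xi_1(\vec\phi)(t)\|_{H^\s} \gtrsim t^2 (N^{-s}R)^k N^{s\cdot 0}$ on the low-frequency lump, i.e.\ $\gtrsim t^2 N^{-ks}R^k$ (times a factor from the measure of the output frequency set, $\sim 1$ on $\T$ and $\sim N$-independent on $\R$). Choosing $t_\eps \sim N^{-\beta}$ and verifying $t_\eps^2 N^{-ks}R^k \gg \eps^{-1}$ is possible because $-ks>0$ makes $N^{-ks}$ a large gain: pick $\beta$ so that $-ks - 2\beta > 0$, then send $N\to\infty$.

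\emph{Step 3: control of the remainder and the linear part.} By Lemma \ref{LEM:TreesEstFL}, $\|\Xi_j(\vec\phi)(t)\|_{\FL^1} \le C^j t^{2j} M^{(k-1)j+1}$, and since $\FL^1 \hookrightarrow H^\s$ fails for $\s>0$ but $\FL^1\hookrightarrow \FL^\infty = $ (bounded Fourier transform) does, one controls the $H^\s$ norm of the remainder by localizing: the output of $\Xi_j$ for $j\geq 2$ lives at frequencies $\lesssim N$ with Fourier transform bounded by $C^j t^{2j} M^{(k-1)j+1}$, so $\|\sum_{j\geq 2}\Xi_j(t_\eps)\|_{H^\s} \lesssim \sum_{j\geq 2} C^j t_\eps^{2j} M^{(k-1)j+1} \jb{N}^{\s_+}$, which is a geometric series in $C t_\eps^2 M^{k-1} = C N^{-2\beta} N^{-(k-1)s} \cdot(\text{powers of }R)$; one needs $t_\eps^2 M^{k-1} \ll 1$, forcing the constraint $2\beta > -(k-1)s$, compatible with $2\beta < -ks$ precisely because $-s>0$. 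Under that constraint the remainder is $\ll$ the main term. Similarly $\|S(t_\eps)\vec v_N\|_{H^\s}$ and the linear evolution of $\vec u_0^\sharp$ are bounded (the former by $\lesssim \jb{N}^{\s-s}R \cdot (\text{something})$ — here one must be slightly careful and may need to choose $\s$-dependent tuning, but for $\s\le s$ this is harmless, and for $\s>s$ the $t^2$ gain on the main term beats it after readjusting $\beta$); in all cases these are dominated by the main term. Finally $\Xi_0(\vec\phi)(0) = \vec\phi$, so $\|(u_\eps(0),\dt u_\eps(0)) - (u_0,u_1)\|_{\H^s} \le \|\vec u_0^\sharp - \vec u_0\|_{\H^s} + \|\vec v_N\|_{\H^s} \lesssim \eps/2 + R < \eps$ for $R$ small. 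Assembling Steps 2--3 and invoking Lemma \ref{LEM:ExistenceOfSolution} to guarantee the solution exists on $[0,t_\eps]$ (which needs $t_\eps \ll M^{-(k-1)/2}$, i.e.\ $\beta > -(k-1)s/2$, the same constraint as above) completes the proof.

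\emph{Main obstacle.} The delicate point is the simultaneous tuning of the three parameters $N, R, t_\eps$ (equivalently $\beta$): one needs $t_\eps^2 M^{k-1}\to 0$ (for convergence of the power series and smallness of the remainder) while $t_\eps^2 M^k \to \infty$ (for blow-up of the main term), and these coexist only because $M = N^{-s}R$ with $-s>0$ gives an extra factor $M = N^{-s}R \to\infty$ between the two; making the logarithmic gauge $R$ do its job — small enough that the $\H^s$-perturbation is $<\eps$, but not so small that it kills the $R^k$ in the main term relative to the $R^{(k-1)j+1}$ in the remainder — is the crux. A secondary technical nuisance is extracting the genuinely non-oscillatory (resonant) part of the frequency-zero output of the $k$-linear term, i.e.\ showing the $\sim t^2$ lower bound survives rather than being cancelled; this is handled by expanding each $\cos$/$\sin$ into exponentials, isolating the term where the $k$ phases sum to (nearly) zero, and checking it has a definite sign over the relevant parameter range.
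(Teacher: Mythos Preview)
Your overall architecture --- perturb by a high-frequency bump, expand in the Picard series, show $\Xi_1$ dominates via high-to-low transfer, control $\Xi_0$ and $\sum_{j\ge 2}\Xi_j$ --- matches the paper. The parameter balancing you describe (need $t_\eps^2 M^{k-1}\ll 1$ for convergence and $t_\eps^2 M^k \gg 1$ for inflation, the gap being one factor of $M\to\infty$) is exactly the right heuristic. However, there is a genuine gap in your construction of $\vec v_N$ and your explanation of the high-to-low transfer.

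You take $\vec v_N$ supported at a \emph{single} frequency $\sim N$ (or, for real data, at $\pm N$). You then say the output lands near frequency $0$ because ``the $k$ frequencies $\sim N$ combin[e] with alternating signs, e.g.\ from $\cos$ expanded into exponentials.'' This is a confusion: the $\cos(t\ld(\xi))$ multiplier in $S(t)$ acts only in the \emph{time} variable and does not move spatial frequencies at all. The spatial Fourier support of $(S(t)\vec v_N)^k$ is the $k$-fold sumset of $\supp\widehat{v_N}$. With support at $\pm N$ only, the sumset is $\{mN: m\equiv k \pmod 2, |m|\le k\}$; for odd $k$ this \emph{never} contains $0$, so there is no high-to-low output. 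The best you get is output at $|\xi|\sim N$, which in $H^s$ carries a factor $N^s$; the main term then has size $\sim T^2 M^{k-1}\cdot R$, and since you need both $T^2 M^{k-1}\ll 1$ and $R<\eps$, this cannot exceed $\eps^{-1}$. So for odd $k$ your construction fails.

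The paper's fix is to place the bump at \emph{four} frequencies, $\Sigma=\{-2N,-N,N,2N\}$. Then for every $k\ge 2$ there is a choice $(\eta_1,\dots,\eta_k)\in\Sigma^k$ with $\sum\eta_j=0$ (e.g.\ $2N+(-N)+(-N)=0$ for $k=3$), and the convolution genuinely lands on a set near the origin; the $H^s$ norm there picks up no $N^s$ penalty, and the lower bound $\|\Xi_1(\vec\phi_n)(T)\|_{H^s}\gtrsim R^k T^2$ survives. A second, smaller point: on the low-frequency output one has $\ld(\xi)=|\xi|/\jb{\xi}\to 0$, not $\to 1$, so the $t^2$ factor comes from $\sin((t-t')\ld(\xi))\ld(\xi)\sim (t-t')\ld(\xi)^2$ together with $\ld(\xi)^2$ being bounded below on the bulk of a fixed-width interval --- the paper handles this by keeping the width parameter $A$ constant ($A=10$), which is also what makes the remainder estimates clean. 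If you replace your single-frequency bump by the four-frequency one and correct the source of the frequency cancellation, the rest of your outline goes through essentially as in the paper.
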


Indeed, once Proposition \ref{THM:main2} is proved, the proof of Theorem \ref{THM:main} follows in the same way, with a slight modification that we treat separately for more clarity. See Remark \ref{REM:ILOR}.

To do so, suppose first that we proved the following proposition:

\begin{proposition}
\label{PROP:final2}
Let $k \geq 2$ and $s < 0$. Fix $(u_0, u_1) \in \mathcal{S} ( \M) \times \mathcal{S}(\M)$. Then, for any $n \in \NB$, there exists a solution $u_n$ to \eqref{imBq} with nonlinearity $f(u) = u^k$ and $t_n \in (0, \frac 1n )$ such that
\begin{equation}
\label{EQ:propFinal}
\| (u_n (0), \dt u_n (0)) - (u_0 , u_1) \|_{\H^s} < \frac 1n \quad \text{ and } \quad \| u_n (t_n ) \|_{H^s} > n.
\end{equation}
\end{proposition}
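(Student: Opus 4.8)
The plan is to establish \eqref{EQ:propFinal} by perturbing the fixed data $u_0$ with a single high-frequency wave packet and extracting the growth from the first Picard iterate $\Xi_1$. Fix $a\in(0,|s|)$; for a large parameter $N=N(n)$, to be chosen last, set $R=N^{a}$ and
\[
v_n \;=\; R\sum_{\xi_*\in S_n}\mathbf{e}_{\xi_*},
\]
a finite sum of Fourier modes $\mathbf{e}_{\xi_*}(x)=e^{i\xi_* x}$ when $\M=\T$, and a finite sum of unit-width nonnegative frequency bumps when $\M=\R$, where $S_n$ is a set of at most four frequencies of size $\sim N$, depending on the parity of $k$, such that at least one ordered $k$-tuple in $S_n^{k}$ sums to a fixed $\xi_0\in\widehat{\M}\setminus\{0\}$ with $|\xi_0|\lesssim 1$; this is easy to arrange, e.g.\ with $S_n=\{N,N+1,-N,2N+1\}$ and $\xi_0=1$. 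The reason for routing the transfer to a \emph{nonzero} low frequency is that the symbol $\ld(\xi)=|\xi|/\jb{\xi}$ of the Duhamel operator $\I_k$ vanishes at $\xi=0$, so — unlike for the nonlinear wave equation — one cannot send the energy all the way down to frequency $0$. I then take $\vec{u}_0^{\,(n)}=(u_0+v_n,\,u_1)$ and $t_n=\dl N^{-a(k-1)/2}$ for a small constant $\dl=\dl(k)>0$ to be fixed. Since $\|\vec{u}_0^{\,(n)}\|_{\FLv^1}\lesssim R$ and $t_n\ll\min(R^{-(k-1)/2},1)$, Lemma~\ref{LEM:ExistenceOfSolution} produces a solution $u_n\in C([0,t_n];\FL^1)$ of \eqref{imBq} with data $\vec{u}_0^{\,(n)}$, obeying $u_n=\sum_{j\geq 0}\Xi_j(\vec{u}_0^{\,(n)})$, and this is the $u_n$ I will use.

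The core step is the lower bound for the first iterate. Split $\Xi_1(\vec{u}_0^{\,(n)})=\Xi_1^{\mathrm{main}}+\Xi_1^{\mathrm{cross}}$ with $\Xi_1^{\mathrm{main}}=\I_k\big(S(t)(v_n,0)^{\otimes k}\big)$ and $\Xi_1^{\mathrm{cross}}$ collecting the terms carrying at least one factor $S(t)(u_0,u_1)$. On the Fourier side, $\widehat{\Xi_1^{\mathrm{main}}}(\xi_0,t_n)$ is a sum over the resonant $k$-tuples $(\xi_1,\dots,\xi_k)\in S_n^{k}$ with $\xi_1+\cdots+\xi_k=\xi_0$ of
\[
\Big(\int_0^{t_n}\sin\!\big((t_n-t')\ld(\xi_0)\big)\,\ld(\xi_0)\prod_{m=1}^{k}\cos\!\big(t'\ld(\xi_m)\big)\,\mathrm{d}t'\Big)\,R^{k},
\]
and for $t_n$ small every term here is strictly positive — because $\ld\leq 1$, $\ld(\xi_0)>0$, and $\ld(\xi_m)\to 1$ on the high frequencies of $S_n$ — so there is no cancellation (this is the high-to-low energy transfer), whence $\widehat{\Xi_1^{\mathrm{main}}}(\xi_0,t_n)\gtrsim_k t_n^{2}R^{k}$ and therefore $\|\Xi_1^{\mathrm{main}}(t_n)\|_{H^s}\gtrsim_k t_n^{2}R^{k}=\dl^{2}N^{a}$.

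It then remains to see that everything else is of lower order. The linear part satisfies $\|\Xi_0(\vec{u}_0^{\,(n)})(t_n)\|_{H^s}\lesssim\|(u_0,u_1)\|_{\H^s}+\|v_n\|_{H^s}\lesssim 1$ by boundedness of $S(t)$ on $H^s$. For $\Xi_1^{\mathrm{cross}}$ one splits the output frequency into a low range (where $\jb{\xi}^{s}\sim 1$) and a high range (where $\jb{\xi}^{s}\lesssim N^{s}$), using the $t^{2}$-smoothing of $\I_k$ and $\|v_n\|_{\FL^1}\sim R$, to obtain $\|\Xi_1^{\mathrm{cross}}(t_n)\|_{H^s}\lesssim_k t_n^{2}R^{k-1}\big(1+N^{s}\big)\lesssim_k\dl^{2}$. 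For the tail, Lemma~\ref{LEM:TreesEstFL} with $M\sim R$ gives $\|\Xi_j(\vec{u}_0^{\,(n)})(t_n)\|_{\FL^1}\lesssim C^{j}(t_n^{2}R^{k-1})^{j}R=C^{j}\dl^{2j}R$, so, summing over $j\geq 2$ and using $\FL^1\embeds H^s$ on the finitely many frequency blocks involved (on $\M=\R$ after the routine reduction to data with compactly supported Fourier transform), $\big\|\sum_{j\geq 2}\Xi_j(\vec{u}_0^{\,(n)})(t_n)\big\|_{H^s}\lesssim_k\dl^{4}N^{a}$. Combining, $\|u_n(t_n)\|_{H^s}\geq c_k\dl^{2}N^{a}-C_k\dl^{4}N^{a}-O(1)$. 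I first fix $\dl=\dl(k)$ small enough that $C_k\dl^{2}\leq c_k/2$, so the right-hand side is $\geq\tfrac{c_k}{2}\dl^{2}N^{a}-O(1)\to\infty$ as $N\to\infty$; since also $\|(v_n,0)\|_{\H^s}=\|v_n\|_{H^s}\sim N^{a+s}\to 0$ and $t_n=\dl N^{-a(k-1)/2}\to 0$, I then pick $N=N(n)$ so large that $t_n<\tfrac1n$, $\|v_n\|_{H^s}<\tfrac1n$ and $\|u_n(t_n)\|_{H^s}>n$, which is exactly \eqref{EQ:propFinal}.

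I expect the main obstacle to be precisely this final balancing: $t_n$ must be taken essentially at the $\FL^1$ local well-posedness threshold $R^{-(k-1)/2}$ — so that $t_n^{2}R^{k-1}=\dl^{2}$ stays bounded and the tail and cross terms come out as small constant multiples of $\Xi_1^{\mathrm{main}}$ — while $\Xi_1^{\mathrm{main}}\sim\dl^{2}R$ nevertheless diverges with $N$; that this window is nonempty rests on the favorable $t^{2}$-scaling of $\I_k$ and the multilinear bound of Lemma~\ref{LEM:TreesEstFL}, and on the choice $\xi_0\neq 0$ dodging the zero of $\ld$ at the origin. A secondary technical nuisance is the embedding $\FL^1\embeds H^s$ when $\M=\R$, handled by frequency localization.
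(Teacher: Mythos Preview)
Your strategy is essentially the paper's: perturb by a handful of high-frequency modes of amplitude $R=N^{a}$, expand via Lemma~\ref{LEM:ExistenceOfSolution}, extract growth $\sim t_n^{2}R^{k}$ from $\Xi_1$ through high-to-low transfer to a fixed nonzero low frequency, and sit at the threshold $t_n\sim R^{-(k-1)/2}$ so that the tail is a small constant multiple of the main term. The paper carries this out with $\Sigma=\{\pm N,\pm 2N\}$, width $A=10$, $R=N^{-s-\dl}$ and $T=N^{(k-1)(s+\dl/2)/2}$, and its Lemma~\ref{LEM:MultilinearEst} and Proposition~\ref{PROP:EstSecondPicard} package exactly the estimates you sketch.

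One step, however, does not go through as written: your control of the tail on $\M=\R$. The embedding $\FL^{1}\hookrightarrow H^{s}$ you invoke is \emph{false} on $\R$, even for functions with compactly supported Fourier transform (any $\hat f\in L^{1}\setminus L^{2}$ supported in $[0,1]$ is a counterexample), so the bound $\|\Xi_j(\vec u_0^{\,(n)})\|_{\FL^{1}}\lesssim C^{j}\dl^{2j}R$ together with frequency localization does not yield $\|\Xi_j\|_{H^{s}}\lesssim C^{j}\dl^{2j}R$; on $\T$ your argument is fine because $\ell^{1}(\Z)\hookrightarrow\ell^{2}(\Z)$. The fix is to use the $\FL^{\infty}$ bound \eqref{EQ:TreesEstInf} instead: for unit-width bumps $\|\vec\phi_n\|_{\H^{0}}\sim R$, so $\|\Xi_j(\vec\phi_n)\|_{\FL^{\infty}}\lesssim C^{j}t_n^{2j}R^{(k-1)j+1}$, while Lemma~\ref{LEM:ConvolutionIneq} gives $|\supp\widehat{\Xi_j(\vec\phi_n)}|\lesssim C^{j}$, whence $\|\Xi_j(\vec\phi_n)\|_{H^{s}}\leq\|\jb{\xi}^{s}\|_{L^{2}(\supp)}\|\Xi_j(\vec\phi_n)\|_{\FL^{\infty}}\lesssim C^{j}t_n^{2j}R^{(k-1)j+1}$; the difference $\Xi_j(\vec u_0^{\,(n)})-\Xi_j(\vec\phi_n)$ is handled by $\|\cdot\|_{H^{s}}\leq\|\cdot\|_{L^{2}}$ and Young's inequality placing one $\hat u_0$ in $L^{2}$, exactly as in \eqref{EQ:EstXi_jRem}. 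This recovers your claimed $\sum_{j\geq2}\|\Xi_j\|_{H^{s}}\lesssim\dl^{4}N^{a}$, after which the rest of your argument is correct.
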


\noi Then, let us fix $\eps > 0$ and choose $n \in \NB$ such that $n > \eps^{-1}$. According to Proposition \ref{PROP:final2}, there exists a solution $u_n$ to \eqref{imBq} and a time $t_n \in (0, \frac 1n ) \subset (0, \eps)$ such that 
$$
\| (u_n (0), \dt u_n (0)) - (u_0 , u_1) \|_{\H^s} < \frac 1n < \eps \quad \text{ and } \quad \| u_n (t_n ) \|_{H^s} > n > \eps^{-1}.
$$

\noi Therefore, Proposition \ref{THM:main2} follows from Proposition \ref{PROP:final2} and the density of $\mathcal{S}(\M)$ in $H^s (\M)$. Similarly, Theorem \ref{THM:main} follows from the following proposition:

\begin{proposition}
\label{PROP:final}
Let $k \geq 2$, $\s \in \R$ and $s < 0$. Fix $(u_0 , u_1) \in \mathcal{S} ( \M) \times \mathcal{S}(\M)$. Then, given any $n \in \NB$, there exists a solution $u_n$ to \eqref{imBq} with nonlinearity $f(u) = u^k$ on $\M$ and $t_n \in (0, \frac 1n )$ such that
$$
\| (u_n (0), \dt u_n (0)) - (u_0 , u_1) \|_{\H^s} < \frac 1n \quad \text{ and } \quad \| u_n (t_n) \|_{H^\s} > n.
$$
\end{proposition}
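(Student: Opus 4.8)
The strategy is the standard high-to-low energy transfer argument of Bejenaru--Tao, as implemented by Oh and Forlano--Okamoto, now adapted to the gIBq propagator $S(t)$ and its Duhamel operator $\I_k$. I would fix a large parameter $N = N(n) \to \infty$ and a small amplitude/frequency-weight parameter, and build the perturbed data as a sum of the smooth background $(u_0, u_1)$ and a highly oscillatory bump concentrated at frequency $\sim N$. Concretely, choose
\begin{equation*}
\phi_N = R \sum_{\pm} \ind_{\{|\xi \mp N| \leq 1\}}^{\vee}, \qquad \text{and set } \vec{u}_{0,n} = (u_0 + \phi_N, u_1),
\end{equation*}
with $R = R(N)$ a small amplitude so that $\|\phi_N\|_{H^s} \sim R N^s \to 0$ (ensuring the first inequality in \eqref{EQ:propFinal}), but $\|\phi_N\|_{\FLv^1} \sim R$ is \emph{large} — this is the essential tension that drives the argument. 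By Lemma \ref{LEM:ExistenceOfSolution}, for $T \ll \min(\|\vec{u}_{0,n}\|_{\FLv^1}^{-(k-1)/2}, 1)$ the solution exists in $C([0,T]; \FL^1)$ and expands as $u_n = \sum_{j\geq 0} \Xi_j(\vec{u}_{0,n})$.

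The heart of the proof is the analysis of the first Picard iterate $\Xi_1(\vec{u}_{0,n}) = \I_k(S(t)\vec{u}_{0,n}, \dots, S(t)\vec{u}_{0,n})$. One expands this $k$-linear term over the $2^k$-or-so sign combinations of the frequency-localized pieces of $\phi_N$ (plus mixed terms involving $u_0, u_1$), and isolates the resonant interaction in which $k$ frequencies of size $\sim \pm N$ combine to produce output frequency $O(1)$ — this is where the low-frequency mass is created. On this resonant set the phase $\Phi$ arising from $S(t-t')P(D)$ and the product of $\cos/\sin$ factors is, to leading order, $O(t)$ small, so the time integral in \eqref{Eq:DuhamelOp} contributes a factor $\sim t^2$ and the symbol $P(D)^2 = \lambda(\xi)^2 \sim |\xi|^2$ at the \emph{output} frequency $O(1)$ does not kill the term, while the amplitude gives $R^k$. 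The upshot is a lower bound of the shape
\begin{equation*}
\| \Xi_1(\vec{u}_{0,n})(t) \|_{H^\s} \gtrsim t^2 R^k
\end{equation*}
for $t$ in a suitable window (the $\s$-weight costs only $O(1)$ since the relevant output frequencies are $O(1)$, which is exactly why infinite loss of regularity is available — any target regularity $\s$ is fine). Meanwhile Lemma \ref{LEM:TreesEstFL} controls everything else: the linear part $\|\Xi_0\|_{H^\s} \lesssim \|u_0\|_{H^\s} + RN^{\s}$ stays bounded, and $\sum_{j \geq 2} \|\Xi_j(t)\|_{H^\s} \lesssim \sum_{j\geq 2} \|\Xi_j(t)\|_{\FL^1} \lesssim \sum_{j \geq 2} (Ct^2 R^{k-1})^j R =: \text{error}$, which is dominated by $t^2 R^{k-1}$ times the main term provided $t^2 R^{k-1} \ll 1$.

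Finally one balances the parameters. Pick $t_n$ at the upper end of the allowed window, $t_n \sim R^{-(k-1)/2}$ (up to a small constant and up to $t_n < 1/n$, which forces $R \to \infty$), so that the main term obeys $\| \Xi_1(t_n) \|_{H^\s} \gtrsim t_n^2 R^k \sim R^{k - (k-1)} = R \to \infty$, while the smallness condition $t_n^2 R^{k-1} \lesssim 1$ holds with room to spare and the error term is $o(R)$. Then $\|u_n(t_n)\|_{H^\s} \geq \|\Xi_1(t_n)\|_{H^\s} - \|\Xi_0(t_n)\|_{H^\s} - \text{error} \gtrsim R > n$ for $N$, hence $R$, chosen large enough, and simultaneously $\|(u_n(0),\dt u_n(0)) - (u_0,u_1)\|_{\H^s} = \|\phi_N\|_{H^s} \sim R N^s < 1/n$ by taking $N$ large relative to $R$. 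Choosing $\s = s$ recovers Proposition \ref{PROP:final2}.

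\textbf{Main obstacle.} The delicate point is the lower bound on $\|\Xi_1(\vec{u}_{0,n})(t)\|_{H^\s}$: one must verify that the resonant piece genuinely dominates, i.e.\ that there is no cancellation among the sign combinations and that the cross terms mixing $\phi_N$ with the (fixed, smooth, frequency-bounded) data $u_0, u_1$ — as well as the non-resonant self-interactions of $\phi_N$ — are all lower order. This requires a careful stationary-phase/Taylor expansion of the gIBq phase $\Phi(\xi,\xi_1,\dots,\xi_k) = \pm\lambda(\xi) \mp \lambda(\xi_1) \mp \cdots$ near the resonant configuration, using that $\lambda(\xi) = |\xi|/\jb{\xi} \to 1$ as $|\xi|\to\infty$ so that $k$ high frequencies with appropriate signs nearly cancel in the phase — this near-cancellation of the phase is precisely what is special to gIBq (as opposed to, say, the wave equation where $\lambda \equiv |\xi|$) and is what one must get right. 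The rest is bookkeeping already packaged in Lemmata \ref{LEM:NumberOfTrees}--\ref{LEM:ExistenceOfSolution}.
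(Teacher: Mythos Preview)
Your overall architecture matches the paper's, but there is a genuine gap in the construction of $\phi_N$. With only two frequency blocks at $\pm N$, the ``resonant'' output of $\Xi_1(\vec\phi_N)$ lands at frequencies $\sum_{j=1}^k \pm N = mN$ with $m\equiv k \pmod 2$; for \emph{odd} $k$ one has $|m|\ge 1$, so the output never reaches frequency $O(1)$ and your claimed lower bound $\|\Xi_1(t)\|_{H^\sigma}\gtrsim t^2 R^k$ fails --- it acquires an extra factor $N^\sigma$ (or $N^s$) which kills the argument precisely when $\sigma=s<0$. The paper avoids this by taking four blocks, $\Sigma=\{-2N,-N,N,2N\}$, so that for every $k\ge 2$ the set $\{(\eta_1,\dots,\eta_k)\in\Sigma^k:\sum\eta_j=0\}$ is nonempty (e.g.\ $2N-N-N=0$ when $k=3$), guaranteeing a genuine high-to-low transfer into $Q_A$.

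Two further points. First, your tail bound $\sum_{j\ge 2}\|\Xi_j(t)\|_{H^\sigma}\lesssim\sum_{j\ge 2}\|\Xi_j(t)\|_{\FL^1}$ invokes an embedding $\FL^1\hookrightarrow H^\sigma$ that is simply false; the paper instead splits $\Xi_j(\vec u_{0,n})=\Xi_j(\vec\phi_n)+\big(\Xi_j(\vec u_{0,n})-\Xi_j(\vec\phi_n)\big)$, controls the first piece by $\|\jb{\xi}^s\|_{L^2(\supp)}\,\|\Xi_j(\vec\phi_n)\|_{\FL^\infty}$ using that $\widehat{\Xi_j(\vec\phi_n)}$ is supported on $\lesssim C^j$ unit cubes (Lemma~\ref{LEM:ConvolutionIneq}), and controls the second piece in $L^2$ by placing one factor of $S(t)\vec u_0$ in $L^2$. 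Your final numerology happens to come out right, but the route you give does not justify it. Second, you overcomplicate the ``main obstacle'': there is no stationary-phase or Taylor expansion needed. Because $\lambda(\xi)=|\xi|/\jb{\xi}\le 1$ uniformly, one has $\cos(t'\lambda(\xi_j))\ge \tfrac12$ and $\sin((T-t')\lambda(\xi))\sim (T-t')\lambda(\xi)$ for $T\ll 1$ directly --- this boundedness of the symbol (rather than any phase cancellation) is the only gIBq-specific feature, and it makes the lower bound on $\Xi_1$ purely elementary (see Proposition~\ref{PROP:EstSecondPicard}). Finally, for $\sigma\ge s$ your bound on $\|\Xi_0\|_{H^\sigma}$ need not stay bounded; the paper handles this trivially via $\|u_n(t_n)\|_{H^\sigma}\ge\|u_n(t_n)\|_{H^s}$, reducing to the case $\sigma<s$.
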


Consequently, the rest of this paper is devoted to the proofs of Proposition \ref{PROP:final2} and Proposition \ref{PROP:final}. In the following, we fix some $\vec{u}_0 \in \mathcal{S} ( \M) \times \mathcal{S}(\M)$.

\subsection{Multilinear estimates}

In this subsection, we establish some multilinear estimates that are essentials to our proof.

Given $n \in \NB$, let us fix some $N = N(n) \gg 1$, $R = R(N) \gg 1$ and $ 1 \ll A = A(N) \ll N$ to be determined later. We define $\vec{\phi}_n \coloneqq (\phi_{n} , 0)$ by setting
\begin{equation}
\label{DEF:phi_n}
\widehat{\phi_n} = R \chi_\O
\end{equation}

\noi where 
\begin{equation}
\label{DEF:Omega}
\O = \bigcup_{\eta \in \Si} (\eta + Q_A)
\end{equation}

\noi with $Q_A = [-\frac A2 , \frac A2]$ and
\begin{equation}
\label{DEF:Sigma}
\Si = \{ -2N, -N, N, 2N\}.
\end{equation}

\noi Note that the condition $A \ll N$ ensures the union \eqref{DEF:Omega} is disjoint. Besides, observe that \eqref{DEF:phi_n}, \eqref{DEF:Omega} and \eqref{DEF:Sigma} imply for any $s \in \R$
\begin{equation}
\label{EQ:EstPhi_n}
\| \vec{\phi}_n \|_{\FLv^1} \sim RA \quad \text{ and } \quad \| \vec{\phi}_n \|_{\H^s} \sim RN^s A^{1/2}.
\end{equation}

\noi We define finally $\vec{u}_{0,n} \coloneqq \vec{u}_0 + \vec{\phi}_n$. Suppose $N$, $R$ and $A$ satisfy 
\begin{equation}
\label{EQ:CondFL1u}
\| \vec{u}_0 \|_{\FLv^1} \ll RA.
\end{equation}

\noi Therefore, for each $n \in \NB$, provided 
\begin{equation}
\label{EQ:CondOnT}
0 < T \ll \min( (RA)^{-\frac{k-1}{2}} , 1),
\end{equation}

\noi Lemma \ref{LEM:ExistenceOfSolution} implies there exists a unique solution $u_n \in C([0,T], \FL^1(\M))$ to \eqref{imBq} with $(u_n , \dt u_n) |_{t=0} = \vec{u}_{0,n}$ and admitting the power series expansion:
\begin{equation}
\label{EQ:PowerSeriesSolRankn}
u_n = \sum_{j = 0}^\infty \Xi_j (\vec{u}_{0,n}) = \sum_{j = 0}^\infty \Xi_j (\vec{u}_{0} + \vec{\phi}_n) .
\end{equation}

\noi The purpose of this subsection is then to estimate the terms of the power series on the right-hand side of \eqref{EQ:PowerSeriesSolRankn}. But first, let us recall the following lemma:

\begin{lemma}
\label{LEM:ConvolutionIneq}
Let $a,b \in \R$ and $A > 0$, then we have
$$
C A \chi_{a + b + Q_A } (\xi) \leq  \chi_{a + Q_A } \ast \chi_{b + Q_A } (\xi) \leq \widetilde{C} A \chi_{a + b + Q_{2A}}(\xi )
$$

\noi where $C, \widetilde{C} > 0$ are constants.
\end{lemma}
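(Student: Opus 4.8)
The plan is to compute the convolution $\chi_{a+Q_A}\ast\chi_{b+Q_A}$ explicitly and read off both inequalities. By definition of convolution on $\R$,
$$
\chi_{a + Q_A} \ast \chi_{b + Q_A}(\xi) = \int_{\R} \chi_{a+Q_A}(y)\, \chi_{b+Q_A}(\xi - y)\, \mathrm{d}y = \big| (a + Q_A) \cap (\xi - b + Q_A) \big|,
$$
i.e. the Lebesgue measure of the intersection of the two intervals of length $A$ centered at $a$ and at $\xi-b$. Two intervals of length $A$ with centers at distance $|a-(\xi-b)| = |\xi-(a+b)|$ overlap in an interval of length $\big(A - |\xi-(a+b)|\big)_+$ (empty once the distance exceeds $A$), so the convolution is the ``tent'' function
$$
\chi_{a + Q_A} \ast \chi_{b + Q_A}(\xi) = \big(A - |\xi - (a+b)|\big)_+ ,
$$
which is supported exactly on $\{|\xi-(a+b)| \le A\} = a+b+Q_{2A}$.

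From this formula both bounds are immediate. For the upper bound, $\big(A - |\xi-(a+b)|\big)_+ \le A$ for all $\xi$ and the left side vanishes off $a+b+Q_{2A}$, hence $\chi_{a+Q_A}\ast\chi_{b+Q_A}(\xi) \le A\,\chi_{a+b+Q_{2A}}(\xi)$, so $\widetilde C = 1$ works. For the lower bound, restrict to $\xi \in a+b+Q_A$, i.e. $|\xi-(a+b)| \le \tfrac A2$; there the tent function is at least $A - \tfrac A2 = \tfrac A2$, which gives $\chi_{a+Q_A}\ast\chi_{b+Q_A}(\xi) \ge \tfrac12 A\,\chi_{a+b+Q_A}(\xi)$, so $C = \tfrac12$ works.

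There is essentially no obstacle here: the statement is an elementary one-variable overlap computation. The only point deserving a word of care is if the same estimate is needed with $Q_A$ a $d$-dimensional cube or on the dual group $\widehat\M = \Z$: in the first case one applies Fubini to factor the convolution over coordinate directions and multiplies the one-dimensional bounds, obtaining the same inequalities with $C,\widetilde C$ depending on $d$; in the second case one replaces the integral by a sum over $\Z$ and the length $A$ by the comparable count $\#\big((Q_A)\cap\Z\big)\sim A$, and the argument goes through verbatim.
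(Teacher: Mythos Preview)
Your proof is correct: the convolution of two interval indicators is exactly the tent function $(A-|\xi-(a+b)|)_+$, from which both the upper bound with $\widetilde C=1$ and the lower bound with $C=\tfrac12$ follow at once. The paper actually states this lemma without proof (it is merely ``recalled''), so there is nothing to compare against; your direct computation is the standard argument, and your remarks on the $d$-dimensional and $\Z$-valued cases are also accurate and match the paper's later use in Lemma~\ref{LEM:ConvolutionIneqInRd}.
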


The proof of the following lemma is essentially the same as in \cite[Lemma 3.2]{FO} and is included for completeness.

\begin{lemma}
\label{LEM:MultilinearEst}

For any $s <0$, $t \in [0, T]$ and $j \in \NB$, the following estimates hold:
\begin{align}
\| \vec{u}_{0,n} - \vec{u}_0 \|_{\H^s} & \sim RN^s A^{1/2}, \label{EQ:MultilinearEst1} \\
\| \Xi_0 (\vec{u}_{0,n})(t) \|_{H^s} & \les 1 + RA^{1/2} N^s, \label{EQ:MultilinearEst2} \\
\| \Xi_1 (\vec{u}_{0,n})(t) - \Xi_1 (\vec{\phi}_n) (t) \|_{H^s} & \les t^2 R^{k-1}A^{k-1} \| \vec{u}_0 \|_{\H^0}, \label{EQ:MultilinearEst3} \\
\| \Xi_j (\vec{u}_{0,n})(t) \|_{H^s} & \les C^j t^{2j} R^{(k-1)j}A^{(k-1)j} \left( \| \vec{u}_0 \|_{\H^0} +  R g_s (A) \right), \label{EQ:MultilinearEst4}
\end{align}

\noi where $g_s (A)$ is defined by 
\begin{equation}
\label{DEF:gs}
g_s (A) \coloneqq
\begin{cases}
 1 & \textup{if } s<-\frac{1}{2}, \\
 \left( \log A \right)^{\frac{1}{2}}& \textup{if } s=-\frac{1}{2}, \\
 A^{\frac{1}{2}+s} & \textup{if }  s>-\frac{1}{2}
\end{cases}
\end{equation}

\end{lemma}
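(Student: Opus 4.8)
The plan is to establish each of the five estimates \eqref{EQ:MultilinearEst1}--\eqref{EQ:MultilinearEst4} by combining the explicit definition of $\vec{\phi}_n$ in \eqref{DEF:phi_n}--\eqref{DEF:Sigma} with the abstract multilinear bounds from Lemma \ref{LEM:TreesEstFL} and the convolution structure captured by Lemma \ref{LEM:ConvolutionIneq}. Estimate \eqref{EQ:MultilinearEst1} is immediate: $\vec{u}_{0,n} - \vec{u}_0 = \vec{\phi}_n$, so it is exactly the second half of \eqref{EQ:EstPhi_n}. For \eqref{EQ:MultilinearEst2}, since $\Xi_0(\vec{\phi}) = S(t)\vec{\phi}$ and $S(t)$ is bounded on $H^s$ uniformly for $t \in [0,T]$ with $T \leq 1$ (because $\cos(tP(D))$ and $\frac{\sin(tP(D))}{P(D)}$ have symbols bounded by $1$ and by $\jb{\xi}/\abs{\xi}$ respectively, the latter causing no trouble at $s<0$ after a low-frequency check), one bounds $\|\Xi_0(\vec{u}_{0,n})(t)\|_{H^s} \lesssim \|\vec{u}_0\|_{\H^s} + \|\vec{\phi}_n\|_{\H^s} \lesssim 1 + RA^{1/2}N^s$, using that $\vec{u}_0 \in \mathcal{S}\times\mathcal{S}$ has $O(1)$ norm.

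The heart of the matter is \eqref{EQ:MultilinearEst3} and \eqref{EQ:MultilinearEst4}. For \eqref{EQ:MultilinearEst3}, I would expand $\Xi_1(\vec{u}_{0,n}) - \Xi_1(\vec{\phi}_n) = \I_k(S(t)\vec{u}_{0,n},\dots) - \I_k(S(t)\vec{\phi}_n,\dots)$ by multilinearity into a sum of terms each containing at least one factor of $S(t)\vec{u}_0$ and the rest either $S(t)\vec{u}_0$ or $S(t)\vec{\phi}_n$. Placing the (at least one) $\vec{u}_0$ factor in $L^2$ and every $\vec{\phi}_n$ factor in $\FL^1$ (so its Fourier transform is $L^1$, norm $\sim RA$), and using that multiplication maps $\FL^1 \times L^2 \to L^2$ together with the Duhamel gain of $t^2$ from \eqref{EQ:EstDuhamelOp}, gives the bound $t^2 R^{k-1}A^{k-1}\|\vec{u}_0\|_{\H^0}$; the negative-order weight $\jb{\xi}^s$ only helps. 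The cross terms with two or more $\vec{u}_0$ factors are even smaller since $R,A \gg 1$. For \eqref{EQ:MultilinearEst4} with $j \geq 1$, each tree term $\Psi(\TT)(\vec{u}_{0,n})$ is $j$ nested Duhamel integrals applied to $(k-1)j+1$ copies of $S(t)(\vec{u}_0+\vec{\phi}_n)$; expanding by multilinearity one again separates the "all $\vec{\phi}_n$" contribution from those with at least one $\vec{u}_0$. For the all-$\vec{\phi}_n$ piece one iterates Lemma \ref{LEM:ConvolutionIneq} to control the $H^s$ norm of the nested convolution of $(k-1)j+1$ characteristic functions of $\eta+Q_A$: the supports add, output frequencies lie in a box of size $O(A)$ (up to the combinatorial constant absorbed into $C^j$), with amplitude $R^{(k-1)j+1}A^{(k-1)j}$, and the $\jb{\xi}^s$-weighted $L^2$ norm over such a box is $\sim A^{1/2}\cdot(\text{size-dependent weight})$; the three cases of $g_s(A)$ in \eqref{DEF:gs} come precisely from integrating $\jb{\xi}^{2s}$ against the resolution $A$ of the output and keeping track of whether the output can be supported near frequency $0$ (for $s > -1/2$ the weight is integrable and one gets $A^{1/2+s}$ from the low part; at $s = -1/2$ a logarithm; for $s < -1/2$ it is $O(1)$). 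For the mixed pieces with at least one $\vec{u}_0$ factor, place that factor in $L^2$, yielding the $\|\vec{u}_0\|_{\H^0}$ term and one fewer power of $RA$, so it is dominated. Summing over $\TT \in \BT(j)$ and using $\abs{\BT(j)} \leq C_0^j$ from Lemma \ref{LEM:NumberOfTrees} absorbs everything into the stated $C^j$.

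The main obstacle I anticipate is the careful bookkeeping in \eqref{EQ:MultilinearEst4}: one must verify that iterating the two-sided bound of Lemma \ref{LEM:ConvolutionIneq} through $j$ levels of the tree produces an output that is still (up to constants depending only on $k$, hence harmless in $C^j$) supported in a dilate of $Q_A$ with a clean amplitude $R^{(k-1)j+1}A^{(k-1)j}$, rather than spreading out — this works because at each Duhamel step the symbols $\ld(\xi) \leq 1$ and $\abs{\sin((t-t')\ld)} \leq \abs{t-t'}\ld \leq t$ only contribute $t^2$ factors and no frequency growth, while the convolutions keep the support inside $\sum_\ell \eta_\ell + Q_{2^j A}$. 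The second delicate point is extracting exactly the trichotomy of $g_s(A)$: this requires splitting the output frequency region into the part near the origin (present when some partial sums of the $\eta_\ell \in \Si$ cancel, which they can since $\Si = \{-2N,-N,N,2N\}$) where $\jb{\xi}^{2s}$ must be integrated over a box of size $\sim A$, versus the part near a nonzero multiple of $N$ where $\jb{\xi}^s \sim N^s$ is small and contributes negligibly compared to the origin part. Once these two technical points are handled, \eqref{EQ:MultilinearEst2} and \eqref{EQ:MultilinearEst3} are routine, and the lemma follows.
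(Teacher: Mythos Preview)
Your proposal is correct and follows essentially the same route as the paper: estimates \eqref{EQ:MultilinearEst1}--\eqref{EQ:MultilinearEst3} are handled exactly as you describe, and for \eqref{EQ:MultilinearEst4} the paper also splits $\Xi_j(\vec{u}_{0,n}) = \Xi_j(\vec{\phi}_n) + \big(\Xi_j(\vec{u}_{0,n}) - \Xi_j(\vec{\phi}_n)\big)$, treating the difference with one $\vec{u}_0$ in $L^2$ and the pure-$\vec{\phi}_n$ term via its Fourier support and $\FL^\infty$ size. The only notable streamlining in the paper is that, rather than splitting the output support into the piece near the origin versus pieces near nonzero multiples of $N$ as you suggest, it simply observes that $\supp \F(\Xi_j(\vec{\phi}_n))$ has measure $\leq C^j A$ and that $\jb{\xi}^s$ is radially decreasing, so $\|\jb{\xi}^s\|_{L^2(\supp)} \leq \|\jb{\xi}^s\|_{L^2(cC^jQ_A)} \lesssim C^j g_s(A)$; combined with the ready-made $\FL^\infty$ bound \eqref{EQ:TreesEstInf} this yields \eqref{EQ:EstXi_jPhi} in one line and avoids the origin/non-origin case analysis altogether.
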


\begin{proof}
The proofs of \eqref{EQ:MultilinearEst1} and \eqref{EQ:MultilinearEst2} follow directly from $\vec{u}_{0,n} = \vec{u}_0 + \vec{\phi}_n$, \eqref{EQ:EstPhi_n}, the unitarity of $S(t)$ for $t \leq 1$ and the fact that $\vec{u}_0$ is fixed, implying $\|\vec{u}_0\|_{\H^s} \les 1$. Besides, the definition \eqref{DEF:Xi_j} of $\Xi_j$ and the multilinearity of $\I_k$ imply
$$
\Xi_1 (\vec{u}_{0,n})(t) - \Xi_1 (\vec{\phi}_n) (t) = \sum\limits_{(v_1, \dots, v_k) \in E} \mathcal{I}_k (v_1, \cdots, v_k)(t)
$$

\noi where $E$ is the subset of $\{ S(t)\vec{u}_0 , S(t)\vec{\phi}_n \}^k$ such that at least one of the choice is $S(t)\vec{u}_0$. Thus, since $s<0$, Young's inequality, \eqref{EQ:EstDuhamelOp} and the unitarity of $S(t)$ imply
\begin{align*}
\| \Xi_1 (\vec{u}_{0,n})(t) - \Xi_1 (\vec{\phi}_n) (t) \|_{H^s} & \les  t^2 \|\vec{u}_0 \|_{\H^0} \left(  \| \vec{u}_0 \|^{k-1}_{\FLv^1} + \| \vec{\phi}_n \|^{k-1}_{\FLv^1} \right) \\
	& \les t^2 \|\vec{u}_0 \|_{\H^0} \left(  \| \vec{u}_0 \|^{k-1}_{\FLv^1} + R^{k-1}A^{k-1} \right)
\end{align*}

\noi which, combined with assumption \eqref{EQ:CondFL1u}, proves \eqref{EQ:MultilinearEst3}.

For the last inequality, we split the left-hand side into two terms: 
\begin{equation}
\label{EQ:SplitXi_j}
\Xi_j (\vec{u}_{0,n})(t) = \left( \Xi_j (\vec{u}_{0,n})(t) - \Xi_j (\vec{\phi}_{n})(t) \right) + \Xi_j (\vec{\phi}_{n})(t)
\end{equation}

\noi which we will estimate separately.

{\bf Part 1:} $\F\big(\Xi_j (\vec{\phi}_{n})(t)\big)$ essentially consists of $(k-1)j + 1$ convolutions of terms of the form $\F(S(t) \vec{\phi}_n)$. According to \eqref{EQ:linearOp} and \eqref{DEF:phi_n}, the support of each of these terms is contained within at most $4$ disjoint cubes of volume approximately $A$. Thus, Lemma \ref{LEM:ConvolutionIneq} and a countability argument show the support of $ \F( \Xi_j (\vec{\phi}_{n})(t) )$ is contained within at most $4^{(k-1)j+1}$ cubes of volume approximately $A$. Therefore, \eqref{DEF:Xi_j} and Lemma \ref{LEM:NumberOfTrees} imply there exist $c,C > 0$ such that
$$
|\supp \F( \Xi_j (\vec{\phi}_{n})(t) ) | \leq C^j A \leq c |C^j Q_A |.
$$

\noi Since $s < 0$, $\jb{\xi}^s$ is decreasing in $\abs{\xi}$ and Young's inequality, \eqref{EQ:TreesEstInf} and the unitarity of $S(t)$ yield
\begin{align*}
\| \Xi_j (\vec{\phi}_{n})(t) \|_{H^s} & \leq \| \jb{\xi}^s \|_{L^2 (\supp \F( \Xi_j (\vec{\phi}_{n})(t) ) ) } \| \Xi_j (\vec{\phi}_{n})(t) \|_{\FL^\infty} \\
	& \les \| \jb{\xi}^s \|_{L^2 (c C^j Q_A) } C^j t^{2j} \| \vec{\phi}_n \|^{(k-1)j-1}_{\FLv^1} \| \vec{\phi}_n \|^2_{\H^0}.
\end{align*}

\noi Since $\| \jb{\xi}^s \|_{L^2 (c C^j Q_A) } \les g_s(A)$ with $g_s$ defined as in \eqref{DEF:gs}, we get from \eqref{EQ:EstPhi_n}
\begin{equation}
\label{EQ:EstXi_jPhi}
\| \Xi_j (\vec{u}_{0,n})(t) \|_{H^s} \les C^j t^{2j} R^{(k-1)j}A^{(k-1)j} R g_s (A).
\end{equation}

{\bf Part 2:} On the other hand, since $\F\big(\Xi_j (\vec{u}_{0})(t) - \Xi_j (\vec{\phi}_{n})(t)\big)$ is essentially a sum of terms made of $j$ integrals in time and $(k-1)j + 1$ convolutions of terms of the form $\F(v)$, with $v \in \{ S(t)\vec{u}_0 , S(t)\vec{\phi}_n \}$, a similar argument as for \eqref{EQ:MultilinearEst3} along with $s < 0$ yield
\begin{align*}
\|  \Xi_j (\vec{u}_{0,n})(t) - \Xi_j (\vec{\phi}_{n})(t) \|_{H^s} & \leq \|  \Xi_j (\vec{u}_{0,n})(t) - \Xi_j (\vec{\phi}_{n})(t) \|_{L^2} \\
	& \les C^j t^{2j} \| \vec{u}_0 \|_{\H^0} \left( \| S(t)\vec{u}_0\|_{\FL^1} + \| S(t)\vec{\phi}_n \|_{\FL^1} \right)^{(k-1)j}.
\end{align*}

\noi Thus, the unitarity of $S(t)$, \eqref{EQ:EstPhi_n} and \eqref{EQ:CondFL1u} give
\begin{equation}
\label{EQ:EstXi_jRem}
\|  \Xi_j (\vec{u}_{0,n})(t) - \Xi_j (\vec{\phi}_{n})(t) \|_{H^s} \les C^j  t^{2j} \| \vec{u}_0 \|_{\H^0} (RA)^{(k-1)j}.
\end{equation}

\noi Now, \eqref{EQ:MultilinearEst4} follows from triangular inequality, \eqref{EQ:EstXi_jPhi} and \eqref{EQ:EstXi_jRem}.

\end{proof}

With the help of Lemma \ref{LEM:ConvolutionIneq}, we can also state the following crucial proposition. This proposition exploits the high-to-low energy transfer mentioned before to identify the first multilinear term in the Picard expansion as the one responsible for the instability in Proposition \ref{PROP:final2} and in Proposition \ref{PROP:final}.

\begin{proposition}
\label{PROP:EstSecondPicard}
Let $s < 0$, $\vec{\phi}_n$ defined as in equation \eqref{DEF:phi_n}, $T \ll 1$ and $A \ll N $. Then, there exists a constant $C > 0$ such that
\begin{equation}
\label{EQ:EstSecondPicard}
\| \Xi_1 (\vec{\phi}_n) (T) \|_{ H^s} \ges C^{k-1} R^k T^2 A^{k - \frac 12 + s}.
\end{equation}

\end{proposition}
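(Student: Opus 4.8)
The plan is to compute $\Xi_1(\vec{\phi}_n)(T) = \I_k(S(t)\vec{\phi}_n, \dots, S(t)\vec{\phi}_n)$ explicitly on the Fourier side and extract a concentration of mass near frequency zero (the high-to-low transfer), then estimate the $H^s$ norm from below by restricting attention to that low-frequency lump. First I would use the definition \eqref{EQ:linearOp} of $S(t)$ and \eqref{DEF:phi_n} to write
$$
\F\big[S(t)\vec{\phi}_n\big](\xi) = R\,\cos\big(t\ld(\xi)\big)\,\chi_\O(\xi),
$$
(recall $\vec{\phi}_n = (\phi_n, 0)$, so only the $\cos$ term survives), and then from \eqref{Eq:DuhamelOp}
$$
\F\big[\Xi_1(\vec{\phi}_n)(T)\big](\xi) = R^k \int_0^T \sin\big((T-t')\ld(\xi)\big)\,\ld(\xi)\,\big(m_{t'}\ast\cdots\ast m_{t'}\big)(\xi)\,\mathrm{d}t',
$$
where $m_{t'}(\eta) = \cos(t'\ld(\eta))\chi_\O(\eta)$ and there are $k$ factors in the convolution.

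\textbf{The key estimates.} The main point is that $\ld(\xi) \le 1$ always, and for $\xi$ in a small cube $Q_{A}$ near the origin (obtained by summing $k$ frequencies from $\Si = \{-2N, -N, N, 2N\}$ that add to something $O(A)$, e.g. choosing the multiset so the $\pm N$'s and $\pm 2N$'s cancel — this is possible since $k\ge 2$, taking appropriate repetitions, or using that the convolution support always contains a neighborhood of $0$ via pairs $\{N,-N\}$ together with $(k-2)$ further cancelling choices) one has $\ld(\xi) \sim |\xi| \lesssim A/N \ll 1$. On that low cube, $T \ll 1$ forces $\sin((T-t')\ld(\xi)) = (T-t')\ld(\xi)\big(1 + O((T\ld(\xi))^2)\big)$ and $\cos(t'\ld(\eta)) = 1 + O((t'\ld(\eta))^2)$; since $\ld \le 1$ and $T \ll 1$, all these error factors are $1 + o(1)$, so to leading order
$$
\F\big[\Xi_1(\vec{\phi}_n)(T)\big](\xi) \sim R^k \Big(\int_0^T (T-t')\,\mathrm{d}t'\Big)\,\ld(\xi)^2\,\big(\chi_\O\ast\cdots\ast\chi_\O\big)(\xi) \sim R^k T^2 \ld(\xi)^2\,\big(\chi_\O^{\ast k}\big)(\xi)
$$
for $\xi$ in the relevant low cube. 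By Lemma \ref{LEM:ConvolutionIneq} applied $k-1$ times, $\chi_\O^{\ast k} \gtrsim C^{k-1} A^{k-1} \chi_{Q_A}$ on a cube of side $\sim A$ centered near $0$ (after choosing the shifts $\eta_1,\dots,\eta_k \in \Si$ summing to $0$). On that cube $\ld(\xi)^2 \sim |\xi|^2$ and $\jb{\xi}^s \sim 1$, so
$$
\| \Xi_1(\vec{\phi}_n)(T) \|_{H^s}^2 \gtrsim \int_{Q_A} \big(R^k T^2 C^{k-1} A^{k-1}\big)^2 |\xi|^4\,\mathrm{d}\xi \sim C^{2(k-1)} R^{2k} T^4 A^{2(k-1)}\cdot A^{4}\cdot A \cdot \big(\text{scale }A\big)^{?}
$$
— more carefully, one must not use the $|\xi|^2$ vanishing but rather take a sub-cube of $Q_A$ of side $\sim A$ bounded \emph{away} from $0$ by $\sim A$ where $\ld(\xi)^2 \sim A^2/N^2 \cdot N^2$... here one has to be careful: the relevant scale for $\ld$ on the low lump is $\sim A$ (not $A/N$), because the lump sits in $Q_{cA}$ near the origin, giving $\ld(\xi) \sim |\xi|$ ranging over $[0, A]$, so on the outer half $|\xi| \sim A$ and $\ld(\xi)^2 \sim A^2$. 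Then $\| \Xi_1(\vec{\phi}_n)(T) \|_{H^s}^2 \gtrsim (R^k T^2 C^{k-1} A^{k-1} A^2)^2 \cdot A^{1+2s} \sim C^{2(k-1)} R^{2k} T^4 A^{2k-1} \cdot A^{2s}$, i.e. $\| \Xi_1(\vec{\phi}_n)(T) \|_{H^s} \gtrsim C^{k-1} R^k T^2 A^{k - 1/2 + s}$, which is \eqref{EQ:EstSecondPicard}.

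\textbf{Main obstacle.} The delicate point is the lower bound on the convolution $\chi_\O^{\ast k}$ restricted to a cube of full side $\sim A$ near the origin: one needs that among the $k$-fold sums $\eta_1 + \cdots + \eta_k$ with $\eta_i \in \Si$, the value $0$ (or some fixed low value) is attained, \emph{and} that Lemma \ref{LEM:ConvolutionIneq}'s lower bound survives iteration without the constant degenerating worse than $C^{k-1}$ — this is exactly where the iterated application of the left inequality in Lemma \ref{LEM:ConvolutionIneq} is used, picking at each stage a pair whose centers sum correctly. A secondary subtlety is confirming that the $\sin$ and $\cos$ Taylor-expansion error terms genuinely contribute only a multiplicative $(1+o(1))$ uniformly over the low cube and over $t' \in [0,T]$; this uses $\ld \le 1$ together with $T \ll 1$ and $A \ll N$, and should be checked but is not deep. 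I expect the convolution lower bound to be the crux and would present it as the first and most careful step, after which the $H^s$ computation is routine.
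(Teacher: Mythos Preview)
Your approach is essentially the paper's: write $\F[\Xi_1(\vec\phi_n)]$ as a sum over $(\eta_1,\dots,\eta_k)\in\Si^k$, isolate the zero-sum tuples (the paper calls this set $\Si_1$ and checks it is non-empty for every $k\ge 2$, e.g.\ via $N+(-N)$ for even $k$ and $N+N+(-2N)$ plus $(N,-N)$-pairs for odd $k$), and lower-bound the resulting low-frequency lump using the iterated left inequality of Lemma~\ref{LEM:ConvolutionIneq} together with $\sin x\sim x$, $\cos x\ge \tfrac12$ for $|x|\ll 1$.

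One concrete correction: your claim $\ld(\xi)^2\sim A^2$ on the outer half of $Q_A$ is wrong, since $\ld(\xi)=|\xi|/\jb{\xi}\le 1$ everywhere; on $|\xi|\sim A\gg 1$ one has $\ld(\xi)^2\sim 1$, and this is what makes the arithmetic close to give $A^{k-\frac12+s}$ (your final line is right, but the intermediate $A^2$ and the displayed squared expression do not match it). A minor structural difference: the paper also upper-bounds the high-frequency piece $I_2$ (supported near $mN$, $m\ne 0$) by $C^{k-1}T^2A^{k-\frac12}N^s$ and then uses $\|I_1+I_2\|_{H^s}\ge \|I_1\|_{H^s}-\|I_2\|_{H^s}$ together with $A\ll N$. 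Your shortcut of simply restricting the $H^s$ norm to $Q_A$ is legitimate and slightly cleaner, because the $\Si_1$- and $\Si_2$-contributions have disjoint Fourier supports (contained in $Q_{kA}$ and $\bigcup_{m\ne 0}(mN+Q_{kA})$ respectively), and on $Q_A$ all the $\Si_1$-contributions are manifestly non-negative, so no cancellation occurs.
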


\begin{proof}
To simplify notation, we write
$$
\Ld \coloneqq \left\{ (\xi_1, \dots, \xi_k) \in \widehat{\M}^k \colon \sum^k_{j=1} \xi_j = \xi \right\} \quad \text{ and } \quad \mathrm{d}\xi_\Ld \coloneqq \mathrm{d}\xi_1 \cdots \mathrm{d}\xi_{k-1}.
$$
Using \eqref{DEF:phi_n} and a product-to-sum formula, we have
\begin{align*}
\F[ \Xi_1 (\vec{\phi}_n)] (T, \xi) = R^k \sum\limits_{(\eta_1, \dots, \eta_k) \in \Si^k} & \int^T_0 \sin((T - t') \ld(\xi)) \ld(\xi) \\
	& \times\int_\Ld \prod^k_{j=1} \cos(t' \ld(\xi_j)) \chi_{\eta_j + Q_A} (\xi_j) \mathrm{d}\xi_\Ld \mathrm{d}t'.
\end{align*}

\noi We split the set $\Si^k$, and hence the sum, into two parts $\Si_1$ and $\Si_2$ where
$$
\Si_1 = \left\{ (\eta_1, \dots, \eta_k) \in \Si^k \colon \eta_1+ \cdots +\eta_k = 0 \right\}
$$

\noi and $\Si_2 = \Si^k \backslash \Si_1$. Note that, for any integer $k \geq 2$, $\Si_1$ and $\Si_2$ are both non-empty. We denote then
\begin{equation}
\label{EQ:ProofSecondPicardSplit}
\F[ \Xi_1 (\vec{\phi}_n)] (T, \xi) = R^k \left( I_1 (T, \xi) + I_2 (T,\xi) \right)
\end{equation}

\noi with, for $r = 1,2$,
$$
I_r (T, \xi) = \sum\limits_{(\eta_1, \dots, \eta_k) \in \Si_r} \int^T_0 \sin((T - t') \ld(\xi)) \ld(\xi) \int_\Ld \prod^k_{j=1} \cos(t' \ld(\xi_j)) \chi_{\eta_j + Q_A} (\xi_j) \mathrm{d}\xi_\Ld \mathrm{d}t'.
$$

Since $T \ll 1$, $\ld(\xi) \leq 1$, $\sin(x) \sim x$ for $\abs{x} \leq 1$ and $\cos(x) \geq \frac 12$ for $\abs{x} \ll 1$, we have
\begin{align*}
I_1 (T, \xi) & \ges \sum\limits_{(\eta_1, \dots, \eta_k) \in \Si_1} \frac 12 T^2 \ld(\xi)^2 \int_\Ld \frac{1}{2^k} \prod^k_{j=1} \chi_{\eta_j + Q_A}(\xi_j) \mathrm{d}\xi_\Ld \\
	& \ges \frac{1}{2^{k+1}} T^2 C^{k-1}_1 A^{k-1} \ld(\xi)^2 \chi_{Q_A} (\xi),
\end{align*}

\noi where the second inequality comes from $\abs{\Si_1} > 1$ and Lemma \ref{LEM:ConvolutionIneq}. Computing the $H^s$ norm, we get
\begin{equation}
\label{EQ:PicardSecondLowFreq}
\| I_1(T) \|_{H^s} \ges C^{k-1}_1 T^2 A^{k - \frac 12 + s}.
\end{equation}

Meanwhile, since $\sin(x) \sim x$ for $\abs{x} < 1$, $\abs{ \cos(x)}\leq 1$, and $\ld(\xi) \leq 1$, we have 
$$
\abs{I_2 (T, \xi)}  \les \sum\limits_{(\eta_1, \dots, \eta_k) \in \Si_2} \frac 12 T^2 \int_\Ld \prod^k_{j=1} \chi_{\eta_j + Q_A}(\xi_j) \mathrm{d}\xi_\Ld.
$$

\noi Fix $(\eta_1 , \dots, \eta_k) \in \Si_2$. Since $\eta_1 + \cdots + \eta_k \neq 0$, \eqref{DEF:Sigma} implies there exists $m \in \Z \backslash \{ 0 \}$ such that 
$$
\eta_1 + \cdots + \eta_k = m N.
$$

\noi Lemma \ref{LEM:ConvolutionIneq} implies then
\begin{align*}
\bigg\| \int_\Ld \prod^k_{j=1} \chi_{\eta_j + Q_A}(\xi_j) \mathrm{d}\xi_\Ld \bigg\|_{H^s} & \les C^{k-1}_1 A^{k-1} \| \chi_{ mN + Q_{kA}} (\xi)\|_{H^s} \\
	& \les C^{k-1}_1 A^{k-\frac 12} N^s.
\end{align*}

\noi Since $\abs{\Si_2} \leq \abs{\Si^k} \leq 4^k$, we get
\begin{equation}
\label{EQ:PicardSecondHighFreq}
\| I_2 (T) \|_{H^s}  \les C^{k-1}_1 T^2 A^{k - \frac 12} N^s .
\end{equation}

\noi Combining \eqref{EQ:PicardSecondLowFreq} and \eqref{EQ:PicardSecondHighFreq} with triangular inequality, we have
\begin{align*}
\| \Xi_1 (\vec{\phi}_n) (T) \|_{ H^s} & \geq R^k \big| \| I_1(T) \|_{H^s} - \| I_2 (T) \|_{H^s} \big| \\
	& \ges R^k C^{k-1}_1 T^2 A^{k - \frac 12} \big(  A^{s} -  N^s \big)
\end{align*}

\noi which, with the assumption $A \ll N$, proves our result.

\end{proof}

\subsection{Proof of Proposition \ref{PROP:final2}}

In this subsection we give the proof of Proposition \ref{PROP:final2}. We claim it suffices to show that, given $n \in \NB$, the following inequalities hold:
\begin{align*}
\textup{(i)} & \quad RA^{\frac 12} N^s < \frac 1n, \\
\textup{(ii)} & \quad T^2 R^{k-1} A^{k-1} \ll 1, \\
\textup{(iii)} & \quad \| \vec{u}_0 \|_{\H^0} \ll R A^{\frac 12 + s} \text{ and } \| \vec{u}_0\|_{\FLv^1} \ll RA, \\
\textup{(iv)} & \quad T^4  (RA)^{2(k-1)} R g_s (A) \ll T^2 R^k A^{k - \frac 12 + s}, \\
\textup{(v)} & \quad T^2 R^k A^{k - \frac 12 + s} \gg n, \\
\textup{(vi)} & \quad A \ll N
\end{align*}

\noi for some particular $R$, $T$ and $N$, all depending on $n$.

Let us show conditions (i) through (vi) prove Proposition \ref{PROP:final2}. First, condition (i) along with \eqref{EQ:MultilinearEst1} verifies the first estimate in \eqref{EQ:propFinal}. Besides, conditions (ii) and the second of (iii), along with Lemma \ref{LEM:ExistenceOfSolution}, prove existence and uniqueness of a solution $u_n$ in $C([0,T], \FL^1)$ with $(u_n , \dt u_n) |_{t=0} = \vec{u}_{0,n}$, as well as the convergence of the power series \eqref{EQ:PowerSeriesSolRankn}. Furthermore, these conditions, along with \eqref{EQ:MultilinearEst4}, yield
\begin{align*}
\sum^{\infty}_{j = 2} \| \Xi_j (\vec{u}_{0,n})(T) \|_{H^s} & \les T^4 (RA)^{2(k-1)} \left(\sum^{\infty}_{j = 0} ( C T^2 R^{k-1}A^{k-1} )^j \right) \left( \| \vec{u}_0 \|_{\H^0} + Rg_s (A) \right) \\
	& \les T^4 (RA)^{2(k-1)} R g_s (A).
\end{align*}

\noi Then, using \eqref{EQ:PowerSeriesSolRankn}, Lemma \ref{LEM:MultilinearEst} and Proposition \ref{PROP:EstSecondPicard} - which is applicable by condition (vi) - conditions (i) through (v) give:
\begin{align*}
\| u_n (T) \|_{H^s} & \geq \|\Xi_1 (\vec{\phi}_{n})(T) \|_{H^s} \\
	& \qquad - \bigg\| \Xi_0 (\vec{u}_{0,n})(T) + \left( \Xi_1 (\vec{u}_{0,n})(T) - \Xi_1 (\vec{\phi}_{n})(T) \right)  +  \sum^{\infty}_{j = 2} \Xi_j (\vec{u}_{0,n})(T) \bigg\|_{H^s} \\
	 & \ges R^k T^2 A^{k - \frac 12 + s} - 1 - RA^{1/2} N^s - T^2 R^{k-1}A^{k-1} \| \vec{u}_0 \|_{\H^0}  -  T^4 (RA)^{2(k-1)} R g_s (A) \\
	& \sim R^k T^2 A^{k - \frac 12 + s} \gg n.
\end{align*}

Thus, this verifies the second estimate in \eqref{EQ:propFinal} at time $t_n \coloneqq T$. Finally, a suitable choice of $T$ in terms of $N = N(n)$ ensures $t_n \in (0, \frac 1n)$, for $N(n)$ sufficiently large. See details below. So ends the proof of Proposition \ref{PROP:final2}.

Consequently, it only remains to verify the conditions (i) through (vi). To do so, we express $A$, $R$ and $T$ in terms of $N$. More precisely, let us choose
$$
A = 10, \quad R = N^{-s-\dl} \quad \text{ and } \quad T = N^{\frac{k-1}{2}(s + \frac \dl2)}
$$

\noi where $\dl$ is sufficiently small, namely $0 < \dl < \min \big(1, -\frac{2}{k+1}s \big)$. Since $A$ is a constant, condition (vi) is trivially satisfied and, since $\vec{u}_0$ is fixed, condition (iii) reduces to
$$
R \gg 1 $$

\noi which is true from $-s - \dl > 0$, for $N$ sufficiently large. Besides, for $N$ sufficiently large, we get
\begin{align*}
& RA^{\frac 12} N^s \sim N^{-\dl} \ll \frac 1n \\
& T^2 R^{k-1} A^{k-1} \sim N^{- \frac{k-1}{2} \dl} \ll 1 \\
& T^2 R^k A^{k - \frac 12 + s} \sim N^{-s - \frac{k+1}{2}\dl} \gg 1
\end{align*}

\noi which prove conditions (i), (ii) and (v). Lastly, condition (iv) is equivalent to 
$$
 T^2 R^{k-1} A^{k - 2}g_s(A) \ll A^{-\frac 12 + s}
$$

\noi which, in our setting, is equivalent to condition (ii) since $A$ is constant. Finally, since $\dl < -s$, observe that $T$ goes to $0$ as $N \to \infty$, so $T \in (0, \frac 1n )$ for $N$ sufficiently large. Taking
$$
N = n^{\frac 2 \dl}
$$
completes the proof since $\dl < 1$.

\begin{remark} \rm
\label{REM:ILOR}

In this remark, we use our previous arguments to prove infinite loss of regularity, namely Proposition \ref{PROP:final}. The idea is to use the same construction as before. Indeed, this construction allowed us to choose our parameter $A$ to be constant. In the following, we observe that the change of regularity between Proposition \ref{PROP:final2} and Proposition \ref{PROP:final} is only expressed in powers of $A$, so that this change has no major implications whatsoever.

Let $s < 0$, $\s \in \R$ and $\vec{u}_0 = ( u_0, u_1) \in \FLv^1$. Define $\vec{\phi}_n$ and $\vec{u}_{0,n}$ as before. There exists, for any given $n \in \NB$, a unique solution $u_n$ to \eqref{imBq} in $C([0,T], \FL^1)$, with $T$ satisfying \eqref{EQ:CondOnT}, such that $(u_n , \dt u_n)|_{t=0} = \vec{u}_{0,n}$. Moreover, $u_n$ can be expressed as the power series in \eqref{EQ:PowerSeriesSolRankn}. First, we claim that it suffices to consider the case $\s < s$. Indeed, in the case $\s \geq s$ the estimate on the initial data remains the same as for Proposition \ref{PROP:final2} and, by Sobolev embedding, we have 
$$
\| u_n (t_n) \|_{H^\s} \geq \| u_n (t_n) \|_{H^s} > n.
$$

Let us then assume $\s < s$. Lemma \ref{LEM:MultilinearEst}, \eqref{EQ:PowerSeriesSolRankn}, \eqref{EQ:CondOnT} and \eqref{EQ:CondFL1u} yield
\begin{align*}
\| u_n(T) - \Xi_1(\vec{\phi}_n) (T) \|_{H^\s} & \leq \| u_n(T) - \Xi_1(\vec{\phi}_n) (T) \|_{H^s} \\
	& \les 1 + RA^{1/2} N^s + T^2 (RA)^{k-1} \| \vec{u}_0 \|_{\H^0}  +  T^4 (RA)^{2(k-1)} R g_s (A)
\end{align*}

\noi while Proposition \ref{PROP:EstSecondPicard} gives
$$
\| \Xi_1 (\vec{\phi}_n) (T) \|_{ H^\s} \ges R^k T^2 A^{k - \frac 12 + \s}.
$$

\noi Therefore, the same arguments as before allow us to say that, to prove Proposition \ref{PROP:final}, it suffices to verify the following hold:
\begin{align*}
\textup{(i)} & \quad RA^{\frac 12} N^s < \frac 1n, \\
\textup{(ii)} & \quad T^2 R^{k-1} A^{k-1} \ll 1, \\
\textup{(iii)} & \quad \| \vec{u}_0 \|_{\H^0} \ll R A^{\frac 12 + \s} \text{ and } \| \vec{u}_0\|_{\FLv^1} \ll RA, \\
\textup{(iv)} & \quad T^4  (RA)^{2(k-1)} R g_s (A) \ll T^2 R^k A^{k - \frac 12 + \s}, \\
\textup{(v)} & \quad T^2 R^k A^{k - \frac 12 + \s} \gg n, \\
\textup{(vi)} & \quad A \ll N.
\end{align*}

\noi Note that, compared to the conditions in the proof of Proposition \ref{PROP:final2}, the only changes are in conditions (iii), (iv) and (v), where the power of $A$ changes. Yet, if we choose $A$ to be constant again, the rest of the reasoning stays the same. Hence, the choices
$$
A = 10, \quad R = N^{-s-\dl}, \quad T = N^{\frac{k-1}{2}(s + \frac \dl2)}, \quad \text{ and }  \quad N = n^{\frac 2 \dl}
$$

\noi prove also Proposition \ref{PROP:final}.
\end{remark}

\begin{remark} \rm
\label{RK:ProofGenIBq}
The proof we gave applies directly to the study of the multi-dimensional generalized improved Boussinesq equation \eqref{generalizedimBq}. The only major change to make is in the definition of the functions $\phi_n$. Indeed, let us denote $e_1 = (1, 0, \cdots, 0) \in \R^d$, we then define $\phi_n$ by
$$
\widehat{\phi}_n (\xi) = R \chi_\O = R \sum_{\eta \in \Si} \chi_{\{ \eta e_1 + Q_A \}}
$$

\noi with $\Si$ as in \eqref{DEF:Sigma} and $Q_A = [-\frac A2 , \frac A2]^d$. Then, the same arguments are still applicable. In particular, observe that Lemma \ref{LEM:ConvolutionIneq} can be rephrased as:

\begin{lemma}
\label{LEM:ConvolutionIneqInRd}
Let $a,b \in \R^d$ and $A > 0$, then we have
$$
C_d A^d \chi_{a + b + Q_A } (\xi) \leq  \chi_{a + Q_A } \ast \chi_{b + Q_A } (\xi) \leq \widetilde{C}_d A^d \chi_{a + b + Q_{2A}}(\xi )
$$

\noi where $C_d, \widetilde{C}_d > 0$ are constants depending only on the dimension $d$.
\end{lemma}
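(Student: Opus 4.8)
The plan is to reduce at once to the case $a = b = 0$ by translation invariance, and then to tensorize the one-dimensional estimate already implicit in Lemma \ref{LEM:ConvolutionIneq}. Writing out the convolution,
\begin{equation*}
\chi_{a + Q_A} \ast \chi_{b + Q_A}(\xi) = \int_{\R^d} \chi_{a + Q_A}(\xi - y)\, \chi_{b + Q_A}(y)\, \mathrm{d}y = \big| (b + Q_A) \cap (\xi - a - Q_A) \big|,
\end{equation*}
so, using that $Q_A$ is symmetric about the origin, the substitution $\xi \mapsto \xi + a + b$ turns the claimed two-sided bound into
\begin{equation*}
C_d A^d\, \chi_{Q_A}(\xi) \le \chi_{Q_A} \ast \chi_{Q_A}(\xi) \le \widetilde{C}_d A^d\, \chi_{Q_{2A}}(\xi).
\end{equation*}

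Next I would exploit the product structure of the cube. Since $Q_A = [-\tfrac A2, \tfrac A2]^d$, Fubini's theorem gives
\begin{equation*}
\chi_{Q_A} \ast \chi_{Q_A}(\xi) = \prod_{i=1}^d \big( \chi_{[-A/2, A/2]} \ast \chi_{[-A/2, A/2]} \big)(\xi_i) = \prod_{i=1}^d \max\big( 0,\, A - |\xi_i| \big),
\end{equation*}
i.e. the convolution factors as the $d$-fold tensor power of the one-dimensional tent function. From this the upper bound is immediate: each factor is at most $A$, and the product vanishes unless $|\xi_i| < A$ for all $i$, that is, unless $\xi \in Q_{2A}$; hence $\widetilde{C}_d = 1$ suffices. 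For the lower bound, when $\xi \in Q_A$ each coordinate satisfies $|\xi_i| \le \tfrac A2$, so each factor is at least $\tfrac A2$, giving a product at least $(A/2)^d$; hence $C_d = 2^{-d}$ suffices.

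There is essentially no obstacle here: the multi-dimensional inequality is literally the $d$-fold tensor product of the one-dimensional one, and the only point worth recording is that the dimensional constants $C_d = 2^{-d}$, $\widetilde{C}_d = 1$ are fixed once $d$ is, hence harmless in every place where this lemma replaces Lemma \ref{LEM:ConvolutionIneq} — namely the support-counting argument in the proof of Lemma \ref{LEM:MultilinearEst} and the lower/upper frequency estimates in Proposition \ref{PROP:EstSecondPicard} — with each cube $Q_A$ now contributing volume $A^d$ in place of $A$, and the Sobolev-weight norm $\| \jb{\xi}^s \|_{L^2(c C^j Q_A)}$ replaced by its $d$-dimensional analogue.
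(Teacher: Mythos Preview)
Your proof is correct. The paper itself does not supply a proof of this lemma (nor of the one-dimensional Lemma~\ref{LEM:ConvolutionIneq}); it simply records the $d$-dimensional statement as a rephrasing of the one-dimensional one, treating both as elementary facts. Your argument via translation to $a=b=0$ and tensorization of the tent function $\max(0,A-|\xi_i|)$ is the standard way to make this precise, and the explicit constants $C_d=2^{-d}$, $\widetilde C_d=1$ you obtain are exactly what one expects.
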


Subsequently, the only changes caused from the change of dimension are expressed in powers of $A$. However, since $A$ is chosen to be constant, it does not change the argument, as for when we proved infinite loss of regularity from ``standard" norm inflation. Therefore, norm inflation with infinite loss of regularity for initial data $\vec{u}_0 \in \H^s(\R^d)$ with $s<0$ follows naturally from the same choice of parameters $A$, $R$, $T$ and $N$.

Similarly, our result would still apply on the same multidimensional problem on the torus $\T^d$, although such a problem has not been seen in the litterature yet, at least not to our knowledge.
\end{remark}

\begin{remark}\rm
\label{REM:NIinWs2infty}

In this remark, we show that our proof also implies Theorem~\ref{THM:mainLinfty}. The idea is to use exactly the same construction and to estimate the norm. Using the same notations, observe that we have
$$
\| u_n (t_n) \|_{W^{\s, 2, \infty}}  \geq \| u_n (t_n) \|_{H^\s} > n.
$$


\noi Therefore, we only need to show that 
$$
\| \vec{u}_0 - \vec{u}_{0,n} \|_{\W^{s,2,\infty}} < \frac 1n
$$

\noi which means that we want $\vec{u}_{0}$ and $\vec{u}_{0,n}$ to satisfy
$$
\| \vec{u}_0 - \vec{u}_{0,n} \|_{\H^{s}} < \frac 1n \quad \text{ and } \quad \| \vec{u}_0 - \vec{u}_{0,n} \|_{W^{s,\infty} \times W^{s,\infty}} < \frac 1n.
$$

\noi We already proved that the first estimate is satisfied, so we are only interested in the second one. Observe that $\vec{u}_0 - \vec{u}_{0,n} = \vec{\phi}_n = (\phi_n , 0)$ and $\phi_n$ is frequently supported on $\O$, defined by \eqref{DEF:Omega}. Then, we get by Cauchy-Schwarz and inverse Fourier transform
$$
| \jb{\nb}^s \phi_n | \leq \int_{\widehat{\M}} | \jb{\xi}^s \widehat{\phi}_n (\xi)| \mathrm{d}\xi = \int_{\O} \jb{\xi}^s |\widehat{\phi}_n (\xi)| \mathrm{d}\xi \leq 2A^{\frac 12} \| \phi_n \|_{H^s}
$$

\noi Since $A = 10$ in our proof, and $\| \vec{\phi}_n \|_{\W^{s, 2, \infty}} = \| \phi_n \|_{W^{s, 2, \infty}}$, this shows the desired result and Theorem~\ref{THM:mainLinfty}. Furthermore, combining this argument with Remark~\ref{RK:ProofGenIBq}, this result still holds in higher dimension.

\end{remark}

\appendix

\section{Norm inflation for other spaces}
\label{appendixA}

In this appendix, we come back to Remark \ref{REM:NIforFLMW} and show how to prove norm inflation with infinite loss of regularity at general initial data for equation \eqref{imBq} in Fourier-Lebesgue, modulation and Wiener amalgam spaces. However, we do not show the entire proof since it is mostly similar to the one we gave for Sobolev spaces. Indeed, we use the same construction and decomposition as before, so the first differences are in the estimates we find in Lemma \ref{LEM:MultilinearEst} and Proposition \ref{PROP:EstSecondPicard}. Fortunately we find that the new estimates are, under the condition $A = 10$, equivalent to the ones we had for Sobolev spaces, so the last part of our argument remains unchanged. Therefore, we just show quickly how to get these estimates. Again, a whole proof would be quite redundant with what was shown before, the arguments being essentially the same. Thus, we just point out the key differences. Before that, we introduce our spaces. Let us recall, at this point, that modulation and Wiener amalgam spaces were introduced by Feichtinger~\cite{Fei83}.

\subsection{New spaces and some preliminary results.}

First, let us recall that Fourier-Lebesgue spaces are defined in Definition \ref{DEF:FLspaces}. Now, we introduce the modulation and Wiener amalgam spaces.

For any $n \in \Z^d$, we define $Q_n = n + \left[- \frac 12, \frac 12 \right]^d$ so that $\R^d = \bigcup_{n \in \Z^d} Q_n$. Then, let $\rho \in \mathcal{S} (\R^d)$ such that $\rho \colon \R^d \to [0,1]$ and 
\begin{equation}
\rho(\xi) = \begin{cases}
1 \quad \text{ if } \quad \abs{\xi} \leq \frac 12 \\
0 \quad \text{ if } \quad \abs{\xi} \geq 1
\end{cases}
\end{equation}

\noi We also denote $\rho_n (\xi) = \rho (\xi - n)$ for any $n \in \Z^d$. Let us define
$$
\s_n = \frac{\rho_n}{\sum_{l \in \Z} \rho_l }
$$

\noi and $P_n f = \F^{-1} ( \s_n \F(f))$ for any suitable $f$. Then, we define the modulation spaces in the following way:

\begin{definition}[Modulation spaces] \rm
Let $s \in \R$ and $p,q \geq 1$. The Modulation space $M^{p,q}_s (\R^d)$ is the completion of the Schwartz class of functions $\mathcal{S} ( \R^d)$ with respect to the norm
$$
\| f \|_{M^{p,q}_s (\R^d)} = \big\| (1 +  \abs{n}^s) \|P_n f \|_{L^p_x (\R^d)} \big\|_{\ell^q_n (\Z^d)}.
$$
\end{definition}

\noi We also define Wiener amalgam spaces in the following way:

\begin{definition}[Wiener amalgam spaces] \rm
Let $s \in \R$ and $p,q \geq 1$. The Wiener amalgam space $W^{p,q}_s (\R^d)$ is the completion of the Schwartz class of functions $\mathcal{S} ( \R^d)$ with respect to the norm
$$
\| f \|_{W^{p,q}_s (\R^d)} = \big\| \|(1 +  \abs{n}^s)P_n f \|_{\ell^q_n (\Z^d)} \big\|_{L^p_x (\R^d)}.
$$
\end{definition}

\noi We have the following relations between modulation and Wiener amalgam spaces:

\begin{lemma}
\label{LEM:RelationsMW}
Let $1 \leq p,q, p_1, q_1, p_2, q_2 \leq \infty$ and $d \geq 1$ be any finite dimension. Then:
\begin{enumerate}
	\item For $p_1 \leq p_2$ and $q_1 \leq q_2$, we have 
\begin{equation}
\label{EQ:EmbeddingsM}
M^{p_1, q_1}_s (\R^d) \embeds M^{p_2, q_2}_s (\R^d),
\end{equation}
\noi and
\begin{equation}
\label{EQ:EmbeddingsW}
W^{p, q_1}_s (\R^d) \embeds W^{p, q_2}_s (\R^d),
\end{equation}
	\item for $q\leq p$, we have
\begin{equation}
\label{EQ:EmbeddingMintoW}
M^{p, q}_s (\R^d) \embeds W^{p, q}_s (\R^d),
\end{equation}
	\noi and for $p \leq q$ we have
\begin{equation}
\label{EQ:EmbeddingWintoM}
W^{p, q}_s (\R^d) \embeds M^{p, q}_s (\R^d),
\end{equation}
	\item and we finally have 
\begin{equation}
\label{EQ:RelationsMandW_end}
M^{p, \min(p,q)}_s (\R^d) \embeds W^{p, q}_s (\R^d) \embeds M^{p, \max(p,q)}_s (\R^d).
\end{equation}
\end{enumerate}
\end{lemma}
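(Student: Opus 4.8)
The plan is to establish Lemma~\ref{LEM:RelationsMW} by unwinding the definitions of the modulation and Wiener amalgam norms and reducing each embedding to elementary inequalities for the mixed norms $\ell^q_n L^p_x$ versus $L^p_x \ell^q_n$. The key observation is that all three parts follow from two standard facts: monotonicity of $\ell^q$ norms in $q$ (and of $L^p$ norms on sets of finite measure, here the unit cubes $Q_n$), and the Minkowski integral inequality $\|\,\|\cdot\|_{\ell^q_n}\,\|_{L^p_x} \leq \|\,\|\cdot\|_{L^p_x}\,\|_{\ell^q_n}$ when $q \leq p$, with the reverse inequality when $p \leq q$.

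First I would prove part (1). For \eqref{EQ:EmbeddingsM}, since the weight $(1+|n|^s)$ and the Littlewood--Paley-type pieces $P_n f$ are common to both sides, it suffices to note $\|g\|_{\ell^{q_2}_n} \leq \|g\|_{\ell^{q_1}_n}$ for $q_1 \leq q_2$ applied to $g(n) = (1+|n|^s)\|P_n f\|_{L^{p_2}_x}$, together with $\|P_n f\|_{L^{p_2}_x} \lesssim \|P_n f\|_{L^{p_1}_x}$; the latter holds because $P_n f$ has Fourier support in a fixed-size neighborhood of $Q_n$, so by Bernstein's inequality all $L^p$ norms of $P_n f$ are comparable (uniformly in $n$, by translation). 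Inequality \eqref{EQ:EmbeddingsW} is even simpler: the outer norm is $L^p_x$ on both sides, so we just apply $\ell^{q_2}_n \hookrightarrow$ is the wrong direction---rather $\|\cdot\|_{\ell^{q_2}_n}\le\|\cdot\|_{\ell^{q_1}_n}$ pointwise in $x$ to the sequence $(1+|n|^s)P_nf(x)$, then take $L^p_x$ norms.

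Next I would prove part (2), which is the heart of the matter: for $q \leq p$, Minkowski's integral inequality gives $\big\|\,\|a_n(x)\|_{\ell^q_n}\big\|_{L^p_x} \leq \big\|\,\|a_n(x)\|_{L^p_x}\big\|_{\ell^q_n}$ with $a_n(x) = (1+|n|^s)P_n f(x)$, which is exactly $\|f\|_{W^{p,q}_s} \leq \|f\|_{M^{p,q}_s}$, i.e.\ \eqref{EQ:EmbeddingMintoW}; for $p \leq q$ the inequality reverses, giving \eqref{EQ:EmbeddingWintoM}. Finally, part (3): for the left embedding in \eqref{EQ:RelationsMandW_end} apply \eqref{EQ:EmbeddingMintoW} with the inner index $\min(p,q) \leq p$ and then \eqref{EQ:EmbeddingsW}-type monotonicity in the outer-inner comparison---more directly, chain $M^{p,\min(p,q)}_s \hookrightarrow W^{p,\min(p,q)}_s \hookrightarrow W^{p,q}_s$ using \eqref{EQ:EmbeddingMintoW} (valid since $\min(p,q)\le p$) and \eqref{EQ:EmbeddingsW}; for the right embedding chain $W^{p,q}_s \hookrightarrow W^{p,\max(p,q)}_s \hookrightarrow M^{p,\max(p,q)}_s$ using \eqref{EQ:EmbeddingsW} and \eqref{EQ:EmbeddingWintoM} (valid since $p \leq \max(p,q)$).

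The main obstacle is really just bookkeeping rather than mathematics: one must be careful that the frequency projections $P_n$ associated to the partition of unity $\{\sigma_n\}$ have overlapping but uniformly finitely-overlapping supports, so that $\|P_n f\|_{L^p}$ is genuinely comparable across $p$ uniformly in $n$ (this is where the smooth cutoff $\rho$ and the normalization $\sigma_n = \rho_n / \sum_l \rho_l$ matter), and that the weight $1+|n|^s$ is constant on each $Q_n$ up to a uniform constant so it factors cleanly out of the $L^p_x$ norm. All of these are standard facts about modulation spaces; since the statement is well known (it goes back to Feichtinger and is recorded in, e.g., the references cited), I would keep the proof brief, citing the literature for the time-frequency facts and presenting only the Minkowski-inequality step in full.
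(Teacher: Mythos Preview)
Your proposal is correct and follows essentially the same approach as the paper: part~(1) via Bernstein's inequality (the paper's Lemma~\ref{LEM:lem6.1}) together with the inclusion $\ell^{q_1}\hookrightarrow\ell^{q_2}$, part~(2) via Minkowski's inequality, and part~(3) by chaining parts~(1) and~(2). Your write-up is more detailed than the paper's (which is very terse), but the underlying argument is identical.
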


\noi The proof of this Lemma can be seen in \cite{BH}, but split into different proofs. For completeness and ease of reading, we include it here. However, we first need the following lemma, which proof can be seen in \cite[Lemma 6.1] {WHHG}:

\begin{lemma}
\label{LEM:lem6.1}
Let $\O$ be a compact subset of $\R^d$ with $d \geq 1$, such that $\mathrm{diam} \ \O < 2R$, with $R > 0$, and $1 \leq p \leq q \leq \infty$. Then, there exists a constant $C > 0$ depending only on $p$, $q$ and $R$ such that:
$$
\| f \|_{L^q (\R^d)} \leq C \| f \|_{L^p (\R^d)}
$$

\noi for any function $f \in L^p (\R^d)$ such that its Fourier transform $\widehat{f}$ is compactly supported in $\O$.
\end{lemma}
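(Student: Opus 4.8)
The plan is to establish this as the classical Bernstein-type inequality: since $\widehat f$ is supported in a set of bounded diameter, $f$ can be recovered as its own convolution against a fixed Schwartz function, after which Young's convolution inequality upgrades integrability from $L^p$ to $L^q$ precisely when $p \le q$.

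First I would reduce to the case of a ball centered at the origin. Since $\mathrm{diam}\,\O < 2R$, fixing any point $\xi_0 \in \O$ gives $\O \subset B(\xi_0, 2R)$. Setting $g(x) = e^{-2\pi i x\cdot\xi_0} f(x)$, multiplication by the character $e^{-2\pi i x\cdot\xi_0}$ leaves every $L^p$-norm unchanged while translating the spectrum, so that $\widehat g$ is supported in $\overline{B(0, 2R)}$. It therefore suffices to prove the estimate under the assumption that $\widehat f$ is supported in $\overline{B(0, 2R)}$.

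Next I would fix, once and for all, a function $\chi \in \mathcal{S}(\R^d)$ with $\widehat\chi \equiv 1$ on $\overline{B(0, 2R)}$, for instance a smooth bump equal to $1$ on that ball and compactly supported. Since $\widehat f$ is supported where $\widehat\chi = 1$, we have $\widehat f = \widehat\chi\,\widehat f$, hence $f = \chi * f$; the technical point is that an $f \in L^p(\R^d)$ whose Fourier transform is distributionally compactly supported is (a.e. equal to) a smooth function, so this convolution identity holds pointwise and the right-hand side is a genuine integral. Young's convolution inequality then applies: choosing $r$ by $1 + \tfrac 1q = \tfrac 1r + \tfrac 1p$, i.e. $\tfrac 1r = 1 - \big(\tfrac 1p - \tfrac 1q\big)$, the hypothesis $p \le q$ yields $0 \le \tfrac 1p - \tfrac 1q \le 1$, so $r \in [1, \infty]$ is admissible and $\chi \in L^r(\R^d)$ since $\chi$ is Schwartz. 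Thus
$$
\| f \|_{L^q (\R^d)} = \| \chi * f \|_{L^q (\R^d)} \le \| \chi \|_{L^r (\R^d)} \| f \|_{L^p (\R^d)},
$$
with $C := \| \chi \|_{L^r(\R^d)}$ depending only on $p$, $q$ and $R$ through the choice of $\chi$ and of $r$.

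The main obstacle is one of justification rather than computation: making the identity $f = \chi * f$ rigorous for merely $L^p$ data requires invoking that a tempered distribution lying in $L^p$ with compactly supported spectrum coincides with a smooth (Paley--Wiener type) function, and one must verify that the resulting constant is genuinely independent of $f$ and of the particular set $\O$, depending only on the admissible diameter $2R$ and the exponents $p, q$. Once these points are in place, the spectral-localization-plus-Young's-inequality argument closes the proof.
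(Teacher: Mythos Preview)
Your argument is correct and is precisely the classical Bernstein inequality proof via spectral cutoff and Young's inequality. The paper does not actually give its own proof of this lemma but simply cites \cite[Lemma 6.1]{WHHG}; your write-up is the standard argument one expects to find there.
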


\begin{proof}[Proof of Lemma \ref{LEM:RelationsMW}]
The embeddings \eqref{EQ:EmbeddingsM} and \eqref{EQ:EmbeddingsW} follow from Lemma \ref{LEM:lem6.1} and the fact that, for any $p \leq q$, $\ell^{p} (\Z^d) \embeds \ell^{q} (\Z^d)$. 

Embeddings \eqref{EQ:EmbeddingMintoW} and \eqref{EQ:EmbeddingWintoM} follow from Minkowski's inequality. 

Finally, \eqref{EQ:RelationsMandW_end} is a combination of \eqref{EQ:EmbeddingsM}, \eqref{EQ:EmbeddingsW}, \eqref{EQ:EmbeddingMintoW}  and \eqref{EQ:EmbeddingWintoM}.
\end{proof}

\noi Besides, we also have the following algebra property:

\begin{lemma}
\label{LEM:algebraM21}
For any $d \geq 1$, the space $M^{2,1}_0 (\R^d)$ is a Banach algebra.
\end{lemma}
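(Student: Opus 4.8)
The plan is to prove that $M^{2,1}_0(\R^d)$ is a Banach algebra under pointwise multiplication by establishing the submultiplicative estimate $\|fg\|_{M^{2,1}_0} \lesssim \|f\|_{M^{2,1}_0}\|g\|_{M^{2,1}_0}$; completeness is already built into the definition as a completion of $\mathcal{S}(\R^d)$. First I would recall that $\|f\|_{M^{2,1}_0(\R^d)} = \sum_{n \in \Z^d} \|P_n f\|_{L^2_x(\R^d)}$, where $P_n$ is the frequency projection onto a unit cube near $n$, so that the Fourier transform of $P_n f$ is supported essentially in $Q_n = n + [-1,1]^d$. The key structural fact is that $\widehat{fg} = \widehat f \ast \widehat g$, so when we localize $fg$ in frequency near $n$, only finitely many pairs $(\ell, m)$ with $\ell + m$ within a bounded distance of $n$ can contribute: writing $f = \sum_\ell P_\ell f$ and $g = \sum_m P_m g$, the product $P_\ell f \cdot P_m g$ has Fourier support in $Q_\ell + Q_m$, which meets $Q_n$ only when $|n - \ell - m| \lesssim 1$.

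The main computation then proceeds as follows. I would write
$$
\|fg\|_{M^{2,1}_0} = \sum_n \Big\| P_n \sum_{\ell,m} P_\ell f \, P_m g \Big\|_{L^2} \leq \sum_n \sum_{\substack{\ell,m \\ |n-\ell-m|\lesssim 1}} \|P_\ell f \, P_m g\|_{L^2},
$$
using boundedness of each $P_n$ on $L^2$. For the inner product $\|P_\ell f \cdot P_m g\|_{L^2}$, I would apply Hölder's inequality to get $\|P_\ell f\|_{L^\infty}\|P_m g\|_{L^2}$ (or the symmetric version), and then invoke Lemma~\ref{LEM:lem6.1} (Bernstein's inequality for functions with Fourier support in a fixed-size set) to bound $\|P_\ell f\|_{L^\infty} \lesssim \|P_\ell f\|_{L^2}$, the implied constant being uniform in $\ell$ since the supports all have bounded diameter. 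This yields $\|P_\ell f \cdot P_m g\|_{L^2} \lesssim \|P_\ell f\|_{L^2}\|P_m g\|_{L^2}$. Substituting back and using that for each $n$ the index $m$ is determined by $\ell$ up to $O(1)$ choices (so the double sum over the constrained set is dominated by a convolution-type sum), I would conclude
$$
\|fg\|_{M^{2,1}_0} \lesssim \sum_{\ell,m} \|P_\ell f\|_{L^2}\|P_m g\|_{L^2} = \Big(\sum_\ell \|P_\ell f\|_{L^2}\Big)\Big(\sum_m \|P_m g\|_{L^2}\Big) = \|f\|_{M^{2,1}_0}\|g\|_{M^{2,1}_0},
$$
where the interchange of summation is justified by absolute convergence (the terms are nonnegative). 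After rescaling the norm by the implied constant, $M^{2,1}_0(\R^d)$ becomes a Banach algebra.

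The main obstacle, though it is a mild one, is bookkeeping the overlapping supports of the smooth projections $P_n$: because $\sigma_n$ is a smooth partition of unity rather than a sharp cutoff, $\widehat{P_\ell f \cdot P_m g}$ is not strictly supported in $Q_\ell + Q_m$ but only essentially so, and $P_n$ of it need not vanish unless $Q_n$ is disjoint from a slightly enlarged $Q_\ell + Q_m$. This is handled by the standard observation that $\rho$ (hence $\sigma_n$) is supported in a cube of side comparable to $1$, so $\widehat{P_\ell f}$ is supported in $\ell + [-1,1]^d$ and $\widehat{P_\ell f \cdot P_m g}$ in $\ell + m + [-2,2]^d$; thus $P_n(P_\ell f \cdot P_m g) = 0$ unless $|n - \ell - m|_\infty \leq 3$, which is the finite-overlap condition used above. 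Everything else is a routine application of Hölder, Bernstein (Lemma~\ref{LEM:lem6.1}), and rearrangement of nonnegative series; the argument is dimension-independent, so it works for all $d \geq 1$ at once.
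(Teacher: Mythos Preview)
Your proof is correct and follows essentially the same approach as the paper: decompose $f$ and $g$ via the frequency projections $P_\ell, P_m$, use the support constraint $|n-\ell-m|\le 3$, bound $\|P_\ell f\cdot P_m g\|_{L^2}$ by H\"older and Bernstein (Lemma~\ref{LEM:lem6.1}), and sum. The only cosmetic difference is that the paper quotes the embedding \eqref{EQ:EmbeddingsM} (which itself rests on Lemma~\ref{LEM:lem6.1}) rather than invoking Bernstein directly, and you are slightly more explicit about the support bookkeeping and the rescaling needed to get an exact algebra inequality.
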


\begin{proof}
We want to show that, for any $u,v \in M^{2,1}_0 (\R^d)$, we have
$$
\| uv \|_{M^{2,1}_0 (\R^d)} = \sum_{n \in \Z^d} \| P_n (uv) \|_{L^2 (\R^d)} \leq \| u \|_{M^{2,1}_0 (\R^d)} \| v \|_{M^{2,1}_0 (\R^d)}.
$$

\noi Note that $u = \sum_{m \in \Z^d} P_m u $ and $v = \sum_{k \in \Z^d} P_k v$. Therefore
$$
P_n (uv) = \sum_{m,k \in \Z^d} P_n (P_m u P_k v ).
$$

\noi The idea then is to study the Fourier support of each of the terms $P_n (P_m u P_k v )$. Since, for any $\xi \in \R^d$,
$$
\F [ P_n (P_m u P_k v ) ] (\xi) = \s_n (\xi) \int_{\xi = \xi_1 + \xi_2} \s_m (\xi_1) \widehat{u}(\xi_1) \s_k (\xi_2) \widehat{v}(\xi_2) \mathrm{d}\xi_1
$$

\noi and, for any $N \in \Z^d$, $\supp \s_N \subset \{ \xi \in \R^d, \abs{\xi - N} \leq 1 \}$, we have then
$$
\supp \big[ (\s_m \widehat{u}) \ast (\s_k \widehat{v}) \big] \subset  \{ \xi \in \R^d, \abs{\xi - m - k} \leq 2 \}
$$

\noi and
$$
\F  [ P_n (P_m u P_k v ) ] \equiv 0 \quad \text{ if } \quad \abs{n - m - k} > 3.
$$

\noi Plancherel's identity, H\"older's inequality and \eqref{EQ:EmbeddingsM} yield then:
\begin{align*}
\|  P_n (P_m u P_k v ) \|_{L^2 ( \R^d)} & \leq \|  P_m u P_k v \|_{L^2(\R^d)} \chi_{(\abs{n- k - m} \leq 3)}, \\
	& \leq \| P_m u \|_{L^\infty (\R^d)} \| P_k v \|_{L^2 ( \R^d) }  \chi_{(\abs{n- k - m} \leq 3)}, \\
	& \les \| P_m u \|_{L^2 (\R^d)} \| P_k v \|_{L^2 ( \R^d) }  \chi_{(\abs{n- k - m} \leq 3)}.
\end{align*}

\noi Hence,
\begin{align*}
\| uv \|_{M^{2,1}_0 (\R^d)} & \leq \sum_{n \in \Z^d} \sum_{m,k \in \Z^d} \|  P_n (P_m u P_k v ) \|_{L^2 ( \R^d)}, \\
	& \les  \sum_{n \in \Z^d} \sum_{m,k \in \Z^d} \| P_m u \|_{L^2 (\R^d)} \| P_k v \|_{L^2 ( \R^d) }  \chi_{(\abs{n- k - m} \leq 3)}, \\
	& \les  \sum_{m,k \in \Z^d} \| P_m u \|_{L^2 (\R^d)} \| P_k v \|_{L^2 ( \R^d) }
\end{align*}

\noi which ends the proof.

\end{proof}

\subsection{Proofs of norm inflation in other spaces}

In this subsection, we claim that the following result is true:

\begin{theorem}
\label{THM:NIwithILORforFLMW}
Assume that $s < 0$ and $1 \leq q \leq \infty$ and let
\begin{equation}
\label{EQ:DefZsq}
Z_s^q \coloneqq \FL^{s,q}(\M) \quad \text{ or } \quad M_s^{2,q}(\R) \quad \text{ or } \quad W_s^{2,q}(\R).
\end{equation}

\noi Then, norm inflation with infinite loss of regularity occurs at the origin for \eqref{imBq} in $Z_s^q$.

\noi Namely, let $\theta \in \R$, $s < 0$ and fix $\vec{u}_0 \in Z_s^q \times Z_s^q$. Given any $\eps > 0$, there exists a solution $u_\eps \in C([0,T], Z_s^q)$ to \eqref{imBq} with $T > 0$ and $t_\eps \in (0, \eps)$ such that
$$
\| (u_\eps (0), \dt u_\eps (0) ) - \vec{u}_0 \|_{Z_s^q \times Z_s^q} < \eps \qquad \text{ and } \qquad \| u_\eps (t_\eps ) \|_{Z_\theta^q} > \eps^{-1}.
$$

\end{theorem}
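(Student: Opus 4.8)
The plan is to mirror the Sobolev-space argument of Section~\ref{Sec3} and Remark~\ref{REM:ILOR} almost verbatim, replacing the $H^s$-estimates of Lemma~\ref{LEM:MultilinearEst} and Proposition~\ref{PROP:EstSecondPicard} by their analogues in $Z_s^q$, and then checking that, once $A = 10$ is fixed, all the relevant estimates reduce to exactly the same powers of $R$, $T$ and $N$ as before, so that the choice
$$
A = 10, \quad R = N^{-s-\dl}, \quad T = N^{\frac{k-1}{2}(s + \frac\dl2)}, \quad N = n^{\frac2\dl}
$$
(with $0 < \dl < \min(1, -\frac{2}{k+1}s)$, and $\dl$ adjusted to accommodate $\theta$ when $\theta < s$ exactly as in Remark~\ref{REM:ILOR}) still works. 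First I would reduce to the case $\vec{u}_0 \in \mathcal{S}(\M)\times\mathcal{S}(\M)$ by density of the Schwartz class in each $Z_s^q$, and reduce to $\theta < s$ by the embeddings \eqref{EQ:EmbeddingsM}, \eqref{EQ:EmbeddingsW} (for modulation/Wiener amalgam) or the trivial monotonicity of $\FL^{s,q}$ in $s$ (for Fourier--Lebesgue). The same $\vec\phi_n = (\phi_n, 0)$ with $\widehat{\phi_n} = R\chi_\O$, $\O = \bigcup_{\eta\in\Si}(\eta + Q_A)$, is used; since $A = 10$ is a fixed constant, $\phi_n$ is frequency-supported in a union of four unit-scale cubes, so its $Z^q_s$-norm is comparable (up to constants depending only on $k$, $q$) to $R$ times $N^s$ — just as $\|\vec\phi_n\|_{\H^s}\sim R N^s A^{1/2}\sim R N^s$ — which is the only input that condition~(i) needs.

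The substantive work is to re-run Lemma~\ref{LEM:TreesEstFL} and Lemma~\ref{LEM:MultilinearEst} with $Z_s^q$ in place of $H^s$. For local well-posedness in $\FL^1$ nothing changes: the power series \eqref{EQ:PowerSeriesSolRankn} is already established in $C([0,T],\FL^1(\M))$ by Lemma~\ref{LEM:ExistenceOfSolution}, and one only needs to estimate the $Z^q_\theta$-norm of each term $\Xi_j(\vec{u}_{0,n})(T)$ along the series. For $Z_s^q = M^{2,q}_s(\R)$ or $W^{2,q}_s(\R)$, I would use the algebra property of $M^{2,1}_0(\R)$ (Lemma~\ref{LEM:algebraM21}) together with the embeddings of Lemma~\ref{LEM:RelationsMW} to get an $\FL^1$-type multilinear bound on the Duhamel iterates, noting that $S(t)$ is bounded on $M^{2,q}_s$ uniformly for $t\le 1$ (its symbol $\cos(t\ld(\xi))$ and $\sin(t\ld(\xi))/\ld(\xi)$ are smooth, bounded, and $\ld\le 1$, so they act as bounded Fourier multipliers on each unit-cube block). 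Crucially, since the frequency supports of all the iterates $\Xi_j(\vec\phi_n)$ live in $O(C^j)$ unit-scale cubes (by Lemma~\ref{LEM:ConvolutionIneq}, with $A = 10$), the $\ell^q_n$-weighting contributes only a factor $C^j$ and a power of $N^s$ (resp. $N^\theta$), never a power of $A$; thus the counterparts of \eqref{EQ:MultilinearEst2}--\eqref{EQ:MultilinearEst4} hold with $A$-powers replaced by constants. Likewise, in the analogue of Proposition~\ref{PROP:EstSecondPicard}, the low-frequency bulk $I_1$ sits on $Q_A = Q_{10}$ (a fixed set) with amplitude $\gtrsim R^k T^2 A^{k-1}\sim R^k T^2$, while the off-diagonal part $I_2$ sits near frequency $mN$ and therefore carries a gain $N^\theta$ (resp. $N^s$); since $N \gg A$, the lower bound $\|\Xi_1(\vec\phi_n)(T)\|_{Z^q_\theta}\gtrsim R^k T^2 - (\text{smaller})\sim R^k T^2$ survives.

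The main obstacle — and the only place where genuine care is needed — is the boundedness of the linear propagator $S(t)$ and of the Duhamel operator $\I_k$ on the modulation and Wiener amalgam spaces, and the corresponding multilinear (product) estimate used to sum the tail $\sum_{j\ge2}\Xi_j$. One must verify that $\cos(tP(D))$ and $\sin(tP(D))/P(D)$ are uniformly bounded Fourier multipliers on $M^{2,q}_s$ and $W^{2,q}_s$ for $t\in[0,1]$ — this follows because their symbols and all derivatives are bounded (being smooth functions of the bounded symbol $\ld(\xi) = |\xi|/\jb\xi$), so they map each frequency-localized block to a slightly larger union of blocks with uniformly bounded operator norm — and that the product estimate, inherited from the $M^{2,1}_0$ algebra property via Lemma~\ref{LEM:RelationsMW}, yields a bound of the same multilinear shape $C^j T^{2j}(RA)^{(k-1)j+1}$ as \eqref{EQ:TreesEst1}. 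Once these are in hand, every estimate has the identical $N$-dependence as in the Sobolev case after setting $A = 10$, so conditions (i)--(vi) of Remark~\ref{REM:ILOR} (with $s$ replaced by $\theta$ in (iii)--(v) when $\theta<s$) are verified by the very same parameter choices, and Theorem~\ref{THM:NIwithILORforFLMW} follows. I would present this as: (1) reductions; (2) the $Z^q_s$-analogue of Lemma~\ref{LEM:MultilinearEst}; (3) the $Z^q_\theta$-analogue of Proposition~\ref{PROP:EstSecondPicard}; (4) the observation that (i)--(vi) are unchanged, hence the conclusion.
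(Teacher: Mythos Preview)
Your proposal is correct and follows essentially the same route as the paper: keep the Sobolev construction with $A=10$, re-derive the analogues of Lemma~\ref{LEM:MultilinearEst} and Proposition~\ref{PROP:EstSecondPicard} in $Z^q_s$ (using the $M^{2,1}_0$ algebra property and the embeddings of Lemma~\ref{LEM:RelationsMW} for the modulation and Wiener amalgam cases), observe that with $A$ constant all estimates have the same $N$-dependence as before, and conclude with the identical parameter choices. The only minor simplification the paper makes is that boundedness of $S(t)$ on $M^{2,q}_s$ follows immediately from $|\cos x|\le 1$, $|\sin x|\le |x|$ on each block (since $P_n$ commutes with Fourier multipliers and the $L^2$ block norm is controlled by the $L^\infty$ bound on the symbol), so you do not need any smoothness of the symbol.
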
 

As explained before, the idea to prove our result is to use the same construction as in Section \ref{Sec3}. Actually, we show that, by keeping the same notations and choosing $A$ to be constant, we have estimates that are equivalent to the ones given in Lemma \ref{LEM:MultilinearEst} and Proposition \ref{PROP:EstSecondPicard}. Then, the proof of Theorem \ref{THM:NIwithILORforFLMW} follows with the same argument.

\subsubsection{The case of Fourier-Lebesgue spaces}

In the case of Fourier-Lebesgue spaces $\FL^{s,q} (\M)$, we get from the same computations as for the Sobolev spaces $H^s (\M)$:

\begin{proposition}
\label{PROP:EstforFL}
Let $s < 0$ and $1 \leq q \leq \infty$. We have then 
\begin{equation}
\label{EQ:normXi0FL}
\| \Xi_0 [\vec{u}_{0, n}](T) \|_{\FL^{s,q}} \les 1 + R N^s A^{\frac 1q},
\end{equation}

\noi as well as
\begin{equation}
\label{EQ:EstXi1UpperFL}
\|\Xi_1 [\vec{u}_{0, n}] (T) - \Xi_1 [\vec{\phi}_{ n}] (T)  \|_{\FL^{s,q}} \les T^{2} (RA)^{(k-1)} \| \vec{u}_0 \|_{\FLv^{q}( \M)},
\end{equation}

\noi and, for any $j \geq 2$, 
\begin{equation}
\label{EQ:EstXijFL}
\|\Xi_j [\vec{u}_{0, n}] (T) \|_{\FL^{s,q}} \les T^{2j} (RA)^{(k-1)j} \big( \| \vec{u}_0 \|_{\FLv^{q}( \M)} + Rf_{s,q} (A) \big)
\end{equation}

\noi where
\begin{equation}
\label{EQ:deffsqFL}
f_{s,q}(A) = \| \jb{\xi}^s \|_{L^q_\xi (Q_A) }.
\end{equation}

\noi Also, if $A \ll N$, we have the following lower bound:
\begin{equation}
\label{EQ:EstSecondPicardFL}
\| \Xi_1 (\vec{\phi}_n) (T) \|_{ \FL^{s,q}} \ges R^k T^2 A^{k - 1 + \frac 1q + s}
\end{equation}
\end{proposition}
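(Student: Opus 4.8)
The plan is to mirror, line by line, the Sobolev-space computations of Lemma~\ref{LEM:MultilinearEst} and Proposition~\ref{PROP:EstSecondPicard}, replacing the $L^2_\xi$ bookkeeping by $L^q_\xi$ bookkeeping and exploiting the fact that the Fourier supports that appear are always finite unions of cubes of side $\sim A$. For \eqref{EQ:normXi0FL}: since $\Xi_0(\vec u_{0,n}) = S(t)(\vec u_0 + \vec\phi_n)$ and $S(t)$ is bounded on $\FL^{s,q}$ for $t\le 1$ (the multipliers $\cos(t\ld)$ and $\sin(t\ld)/\ld$ are bounded by $1$ pointwise in $\xi$, since $\ld(\xi)\le 1$ and $|\sin y|\le|y|$), we get $\|\Xi_0(\vec u_{0,n})(T)\|_{\FL^{s,q}} \les \|\vec u_0\|_{\FLv^{s,q}} + \|\vec\phi_n\|_{\FL^{s,q}}$. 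The fixed-data term is $\les 1$, and for $\vec\phi_n = (\phi_n,0)$ with $\widehat{\phi_n} = R\chi_\O$, $\O$ a union of $4$ disjoint translates of $Q_A$ centered at $\pm N,\pm 2N$, one computes $\|\phi_n\|_{\FL^{s,q}} = R\|\jb\xi^s\|_{L^q_\xi(\O)} \sim R N^s A^{1/q}$, giving \eqref{EQ:normXi0FL}. For \eqref{EQ:EstXi1UpperFL} and \eqref{EQ:EstXijFL}, I would repeat the decomposition $\Xi_j(\vec u_{0,n}) = (\Xi_j(\vec u_{0,n}) - \Xi_j(\vec\phi_n)) + \Xi_j(\vec\phi_n)$ exactly as in the proof of Lemma~\ref{LEM:MultilinearEst}. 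The "remainder" term is handled via the algebra property of $\FL^{q'}$-type spaces — more precisely, one bounds it in $\FL^{q}$ via an analogue of \eqref{EQ:EstDuhamelOp} and \eqref{EQ:TreesEstInf}, using that $\FL^1$ controls products and that at least one factor of $\vec u_0$ is present, yielding the factor $\|\vec u_0\|_{\FLv^q}$ (or $\|\vec u_0\|_{\FLv^{2}}$-type quantity; here the natural replacement of $\H^0$ is $\FLv^q$ itself). The $\Xi_j(\vec\phi_n)$ term again has Fourier support in $\les C^j$ cubes of side $\sim A$, so $\|\Xi_j(\vec\phi_n)(T)\|_{\FL^{s,q}} \les \|\jb\xi^s\|_{L^q(C^j Q_A)} \|\Xi_j(\vec\phi_n)(T)\|_{\FL^\infty} \les f_{s,q}(A)\, C^j T^{2j}(RA)^{(k-1)j-1}\|\vec\phi_n\|_{\FLv^{?}}^2$, and using $\|\vec\phi_n\|_{\FLv^1}\sim RA$ (and noting $f_{s,q}(A)$ absorbs the $q$-dependence of $\|\jb\xi^s\|_{L^q(C^jQ_A)}$ up to constants because $A$ will be taken constant) one arrives at \eqref{EQ:EstXijFL}.

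For the lower bound \eqref{EQ:EstSecondPicardFL}, I would repeat the proof of Proposition~\ref{PROP:EstSecondPicard} verbatim until the split $\F[\Xi_1(\vec\phi_n)](T,\xi) = R^k(I_1 + I_2)$ along the partition $\Si^k = \Si_1 \sqcup \Si_2$ with $\Si_1 = \{\eta : \sum\eta_j = 0\}$. The resonant part satisfies $I_1(T,\xi) \ges T^2 C_1^{k-1} A^{k-1}\ld(\xi)^2\chi_{Q_A}(\xi)$ exactly as before, so $\|I_1(T)\|_{\FL^{s,q}} \ges C_1^{k-1} T^2 A^{k-1}\|\jb\xi^s\|_{L^q(Q_A')}$ for a slightly smaller cube $Q_A'$, which is $\sim C_1^{k-1}T^2 A^{k-1}A^{1/q + s} = C_1^{k-1}T^2 A^{k-1+1/q+s}$ (using that $\jb\xi^s \sim |\xi|^s$ is comparable to a constant times $A^s$ on a fixed fraction of $Q_A$ when $A\gg 1$, plus the measure $\sim A$ raised to the $1/q$). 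The non-resonant part $I_2$ is frequency-supported in cubes centered at $mN$, $m\ne 0$, of side $\sim kA$, so $\|I_2(T)\|_{\FL^{s,q}} \les C_1^{k-1}T^2 A^{k-1}\|\jb\xi^s\|_{L^q(mN+Q_{kA})} \les C_1^{k-1}T^2 A^{k-1}N^s A^{1/q}$. Subtracting, $\|\Xi_1(\vec\phi_n)(T)\|_{\FL^{s,q}} \ges R^k C_1^{k-1}T^2 A^{k-1+1/q}(A^s - N^s)$, and the hypothesis $A\ll N$ gives $A^s - N^s \sim A^s$, yielding \eqref{EQ:EstSecondPicardFL}.

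The main obstacle — and it is a mild one — is the careful tracking of the $q$-dependence of norms like $\|\jb\xi^s\|_{L^q_\xi(c C^j Q_A)}$, which carry an implicit $A^{1/q}$ (and for $s$ near $-1/q$ a logarithmic or power correction analogous to $g_s$); the point to emphasize is that because the final argument takes $A$ to be the fixed constant $10$, every such $q$-dependent factor collapses to an absolute constant, so all four estimates \eqref{EQ:normXi0FL}--\eqref{EQ:EstSecondPicardFL} become, for the purpose of verifying conditions (i)--(vi), identical in form to their Sobolev counterparts with $s$ replaced by $s$ (resp.\ $\theta$) and $A^{1/2}$ replaced by $A^{1/q}$. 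Hence no new choice of parameters is needed: the same $A = 10$, $R = N^{-s-\dl}$, $T = N^{\frac{k-1}{2}(s+\dl/2)}$, $N = n^{2/\dl}$ work, and Theorem~\ref{THM:NIwithILORforFLMW} for the Fourier-Lebesgue case follows exactly as Proposition~\ref{PROP:final} did. The only genuinely new technical input beyond bookkeeping is a one-line use of an analogue of Bernstein's inequality (or simply $\ell^q$ nesting on the finitely many cubes) to pass between $\FL^q$ and $\FL^\infty$ on functions with Fourier support in $\les C^j$ cubes of bounded size, which is immediate.
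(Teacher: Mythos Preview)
Your proposal is correct and follows exactly the approach the paper indicates: it simply reruns the proofs of Lemma~\ref{LEM:MultilinearEst} and Proposition~\ref{PROP:EstSecondPicard} with $L^q_\xi$ in place of $L^2_\xi$, using the same Fourier-support/convolution bookkeeping and the same $I_1$/$I_2$ split. The one place you left a ``?'' is resolved by the paper's own \eqref{EQ:TreesEstInf}: the two distinguished factors in the $\FL^\infty$ bound on $\Xi_j(\vec\phi_n)$ are still measured in $\H^0$ (giving $\|\vec\phi_n\|_{\H^0}^2 \sim R^2 A$), while the $\FLv^q$-norm of $\vec u_0$ enters only in the remainder term, exactly as you wrote.
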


\noi Under the condition $A = 10$, we observe that \eqref{EQ:normXi0FL}, \eqref{EQ:EstXi1UpperFL}, \eqref{EQ:EstXijFL} and \eqref{EQ:EstSecondPicardFL} are essentially equivalent respectively to \eqref{EQ:MultilinearEst2}, \eqref{EQ:MultilinearEst3}, \eqref{EQ:MultilinearEst4} and \eqref{EQ:EstSecondPicard}, so that the expected result follows from the same argument. There is one condition to change though, which is $\| \vec{u}_0 \|_{\FLv^{q}( \M)}  \les 1$ instead of $\| \vec{u}_0 \|_{\H^0 ( \M)} \les 1$, but this is quite reasonable since our initial data $\vec{u}_0$ is fixed.

\subsubsection{The case of modulation spaces}

The computations in this section follow the same ideas as in the previous section, but the modulation spaces make them a bit trickier to apply. We first state our results:

\begin{proposition}
\label{PROP:EstForMs2qspaces}
Let $s < 0$ and $1 \leq q \leq \infty$. Keeping the same notations as before, and asssuming

$$
\| \vec{u}_0 \|_{M^{2,q}_s \times M^{2,q}_s} \les 1 \quad \text{ and }  \quad \| \vec{u}_0 \|_{M^{2,1}_0 \times M^{2,1}_0}  \ll  R \| (1 + \abs{n})^s \|_{\ell^q ( 1 \leq \abs{n} \leq A)},
$$

\noi we have 
\begin{equation}
\label{EQ:normXi0M}
\| \Xi_0 [\vec{u}_{0,n}](T) \|_{M_s^{2,q}} \les 1 + R N^s A^{\frac 12},
\end{equation}

\noi as well as
\begin{equation}
\label{EQ:EstXi1UpperM}
\|\Xi_1 [\vec{u}_{0, n}] (T) - \Xi_1 [\vec{\phi}_{ n}] (T)  \|_{M_s^{2,q}} \les T^{2} (RA)^{(k-1)} \| \vec{u}_0 \|_{M^{2,1}_0( \M)},
\end{equation}

\noi and, for any $j \geq 2$
\begin{equation}
\label{EQ:EstXijFL}
\| \Xi_j [\vec{u}_{0,n}] (T) \|_{M_s^{2,q}}   \les T^{2j} (RA)^{(k-1)j} R \| (1 + \abs{n})^s \|_{\ell^q ( 1 \leq \abs{n} \leq A)}.
\end{equation}

\noi Also, if $A \ll N$, we have the following upper bound:
\begin{equation}
\label{EQ:EstSecondPicardFL}
\| \Xi_1 (\vec{\phi}_n) (T) \|_{M_s^{2,q}} \ges R^k T^2 A^{k - 1}A^{ \frac 1q + s}.
\end{equation}
\end{proposition}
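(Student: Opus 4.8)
The strategy is to repeat, essentially verbatim, the proofs of Lemma~\ref{LEM:MultilinearEst} and Proposition~\ref{PROP:EstSecondPicard}, with the Wiener algebra $\FL^1(\R)$ replaced by the Banach algebra $M^{2,1}_0(\R)$ of Lemma~\ref{LEM:algebraM21} and the target space $H^s(\R)$ replaced by $M^{2,q}_s(\R)$. Three ingredients carry over unchanged. First, the multipliers $\cos(tP(D))$, $\frac{\sin(tP(D))}{P(D)}$ and $\sin((t-t')P(D))P(D)$ all have, for $t,t-t'\in[0,1]$, symbols that are even $C^\infty_b(\R)$ functions of $\lambda(\xi)^2=\xi^2/\jb{\xi}^2$, the last one moreover of size $\les|t-t'|$; hence, by standard Fourier multiplier theory on modulation spaces, $S(t)$ is bounded on every $M^{2,q}_\sigma(\R)$ uniformly for $t\in[0,1]$, and, iterating Lemma~\ref{LEM:algebraM21}, the Duhamel operator obeys the analogue of \eqref{EQ:EstDuhamelOp}, namely $\|\I_k(u_1,\dots,u_k)(T)\|_{C_T M^{2,1}_0}\les T^2\prod_{j=1}^k\|u_j\|_{C_T M^{2,1}_0}$ for $0<T\le1$. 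Second, the power-series expansion \eqref{EQ:PowerSeriesSolRankn}, the splitting \eqref{EQ:SplitXi_j} and the tree bookkeeping of Lemma~\ref{LEM:NumberOfTrees} are purely algebraic, hence still valid. Third, Lemma~\ref{LEM:ConvolutionIneq} still shows that the Fourier support of $\Xi_j(\vec{\phi}_n)(T)$ is a union of at most $C^j$ intervals of length $\sim A$, each near a frequency $mN$ for some integer $m$ with $|m|\les j$.

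The only new input is the size of the building block $\vec{\phi}_n=(\phi_n,0)$ in the new spaces. Since $\widehat{\phi_n}=R\chi_\Omega$ with $\Omega$ a union of four intervals of length $A$ centred at $\pm N,\pm2N$, only $O(A)$ of the unit cubes $Q_n$ meet $\Omega$ (all with $|n|\sim N$) and $\|P_n\phi_n\|_{L^2}\les R$ on each, so $\|\vec{\phi}_n\|_{M^{2,1}_0}\sim RA$, whereas inserting the weight $(1+|n|^s)\sim N^s$ before taking the $\ell^q$ norm gives $\|\vec{\phi}_n\|_{M^{2,q}_s}\les RN^sA^{1/2}$, which is all that is used since $A$ will ultimately be taken constant. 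With these at hand, \eqref{EQ:normXi0M} follows from $\Xi_0(\vec{u}_{0,n})=S(t)\vec{u}_0+S(t)\vec{\phi}_n$, the boundedness of $S(t)$, and the hypothesis $\|\vec{u}_0\|_{M^{2,q}_s\times M^{2,q}_s}\les1$. For \eqref{EQ:EstXi1UpperM} one writes $\Xi_1(\vec{u}_{0,n})-\Xi_1(\vec{\phi}_n)$ as a sum of $\I_k$-terms built from $\{S(t)\vec{u}_0,S(t)\vec{\phi}_n\}$ with at least one factor $S(t)\vec{u}_0$, estimates each in $M^{2,1}_0$ via the Duhamel bound above (whence the factor $T^2$), embeds $M^{2,1}_0\hookrightarrow M^{2,q}_0\hookrightarrow M^{2,q}_s$ using Lemma~\ref{LEM:RelationsMW} and $s<0$, and then uses $\|\vec{\phi}_n\|_{M^{2,1}_0}\sim RA$ with the assumed control of $\|\vec{u}_0\|_{M^{2,1}_0}$, exactly as for \eqref{EQ:MultilinearEst3}. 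For the bound on $\Xi_j(\vec{u}_{0,n})$ with $j\ge2$ one splits as in \eqref{EQ:SplitXi_j}: the piece $\Xi_j(\vec{u}_{0,n})-\Xi_j(\vec{\phi}_n)$ is handled in $M^{2,1}_0$ as above, giving $C^jT^{2j}(RA)^{(k-1)j}\|\vec{u}_0\|_{M^{2,1}_0}$, while for $\Xi_j(\vec{\phi}_n)$ one combines the $\FL^\infty$-bound \eqref{EQ:TreesEstInf} (with $\vec{\psi}=\vec{\phi}_n$) with the above description of the Fourier support and Lemma~\ref{LEM:lem6.1}, so that each $\|P_n\Xi_j(\vec{\phi}_n)\|_{L^2}$ is controlled by the $\FL^\infty$-norm while only $O(C^jA)$ frequencies contribute, the weight $(1+|n|^s)$ producing the factor $\|(1+|n|)^s\|_{\ell^q(1\le|n|\le A)}$ (with a harmless extra gain $N^s$ on the pieces near $mN$, $m\neq0$).

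For the lower bound one follows Proposition~\ref{PROP:EstSecondPicard}: isolating the contribution $I_1$ of the tuples $(\eta_1,\dots,\eta_k)\in\Sigma^k$ with $\eta_1+\cdots+\eta_k=0$, Lemma~\ref{LEM:ConvolutionIneq} produces a piece of $\F[\Xi_1(\vec{\phi}_n)](T,\cdot)$ of size $\ges R^kT^2A^{k-1}\lambda(\xi)^2$ on a fixed interval around the origin of length $\sim A$; restricting to $|\xi|\sim A$, where $\lambda(\xi)^2\ges1$, and to the $O(A)$ unit cubes $Q_n$ with $|n|\sim A$ on which $(1+|n|^s)$ is bounded below, this contributes $\ges R^kT^2A^{k-1}A^{1/q+s}$ to the $M^{2,q}_s$-norm, while the complementary part $I_2$, supported near the frequencies $mN$ with $m\neq0$, is $\les R^kT^2A^{k-1}A^{1/q}N^s$, which is negligible when $A\ll N$. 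The only genuine obstacle relative to the Sobolev setting is that $M^{2,q}_s$ is not an algebra for generic $q$, so the multilinear Duhamel estimates cannot be closed directly in the target space; routing them through $M^{2,1}_0$ and then embedding is precisely what forces the auxiliary hypothesis on $\|\vec{u}_0\|_{M^{2,1}_0\times M^{2,1}_0}$ in the statement, and one must also check, using that the symbols above are smooth functions of $\lambda^2$, that the Fourier multipliers entering $S(t)$ and $\I_k$, which are not of standard $L^p$ type, are bounded on modulation spaces. Once \eqref{EQ:normXi0M}, \eqref{EQ:EstXi1UpperM}, the bound for $j\ge2$ and the lower bound are in place, they coincide, for $A=10$, with \eqref{EQ:MultilinearEst2}, \eqref{EQ:MultilinearEst3}, \eqref{EQ:MultilinearEst4} and \eqref{EQ:EstSecondPicard}, so Theorem~\ref{THM:NIwithILORforFLMW} in the modulation-space case follows from the parameter choices of Section~\ref{Sec3}; the Wiener amalgam case then reduces to this one via the embeddings \eqref{EQ:RelationsMandW_end}.
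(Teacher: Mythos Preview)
Your proposal is correct and follows essentially the same route as the paper: route the multilinear Duhamel estimates through the Banach algebra $M^{2,1}_0$ in place of $\FL^1$, then embed $M^{2,1}_0\hookrightarrow M^{2,q}_s$ via Lemma~\ref{LEM:RelationsMW}; control $\Xi_j(\vec{\phi}_n)$ by combining the $\FL^\infty$-bound \eqref{EQ:TreesEstInf} with the Fourier-support count; and adapt Proposition~\ref{PROP:EstSecondPicard} for the lower bound. The only minor difference is that you justify boundedness of $S(t)$ and $\I_k$ on $M^{2,q}_s$ by appealing to smooth-symbol multiplier theory, whereas the paper simply uses Plancherel on each $\|P_n\cdot\|_{L^2}$ together with $|\cos|\le1$, $|\sin x|\le|x|$---which already suffices since the inner norm is $L^2$.
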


Again, we see that if we choose $A$ to be constant, all these estimates are equivalent to the estimates we had for the Sobolev spaces, and the norm inflation result becomes straightforward. We include the proof of Proposition \ref{PROP:EstForMs2qspaces} both for completeness and to put an emphasis on the differences between these spaces and Sobolev spaces.

\begin{proof}[Proof of Proposition \ref{PROP:EstForMs2qspaces}]

First, we have from $\abs{ \cos x } \leq 1$ and $\abs{\sin x} \les \abs{x}$,
\begin{equation}
\label{EQ:EstLinearM}
\| S(t)(u_0 , u_1 ) \|_{M_s^{2,q}} \les \| u_0 \|_{M_s^{2,q}} + \abs{t} \| u_1 \|_{M_s^{2,q}} \les \| (u_0, u_1) \|_{M^{2,q}_s \times M^{2,q}_s}
\end{equation}

\noi for any $0 \leq t \leq 1$ and $u_0, u_1 \in M^{2,q}_s$. Besides, we also have 
\begin{align*}
\| \phi_n \|_{M_s^{2,q}}  & = R \big\| (1 + \abs{n})^s \| P_n \F^{-1}(\sum_{\eta \in \Si} \chi_{\eta + Q_A}) \|_{L^2} \big\|_{\ell^q_n} \\
	& =  R \big\| (1 + \abs{n})^s \| \s_n \sum_{\eta \in \Si} \chi_{\eta + Q_A} \|_{L^2} \big\|_{\ell^q_n}
\end{align*}

\noi but since the sets $\eta + Q_A$ are disjoint for any two $\eta_1 \neq \eta_2$ in $\Si$, we get
$$
\left\| \s_n \sum_{\eta \in \Si} \chi_{\eta + Q_A} \right\|^2_{L^2} \sim \chi_{N + Q_A} (n) \int_{\R} \s^2_n (\xi) \chi^2_{n + Q_A} (\xi) \mathrm{d}\xi \sim \chi_{N + Q_A} (n)
$$

\noi and
$$
\| \phi_n \|_{M_s^{2,q}} \sim R \| (1 + \abs{n} )^s \chi_{N + Q_A} (n)\|_{\ell^q_n} \sim RN^s A^{\frac 1q}
$$

\noi which, combined with \eqref{EQ:EstLinearM}, proves our first estimate. Besides, \eqref{EQ:EmbeddingsM} and the algebra property of $M^{2,1}_0$ yield, in a similar argument as for \eqref{EQ:MultilinearEst3}:
\begin{align*}
\|\Xi_1 [\vec{u}_{0, n}] (T) - \Xi_1 [\vec{\phi}_{ n}] (T)  \|_{M_s^{2,q}} & \les \|\Xi_1 [\vec{u}_{0, n}] (T) - \Xi_1 [\vec{\phi}_{ n}] (T)  \|_{M_0^{2,1}}, \\
	& \les T^2 \| \vec{u}_0 \|_{M^{2,1}_0 \times M^{2,1}_0} \big( \| \vec{u}_0 \|_{M^{2,1}_0 \times M^{2,1}_0} + \| \vec{\phi}_n \|_{M^{2,1}_0 \times M^{2,1}_0} \big)^{k-1}, \\
	& \les T^2 \| (u_0, u_1) \|_{M^{2,1}_0 \times M^{2,1}_0} (RA^{\frac 12})^{k-1}.
\end{align*}

\noi Hence \eqref{EQ:EstXi1UpperM}.

Now, let $j \geq 2$. Using the same argument on the support as in the proof for \eqref{EQ:MultilinearEst4}, we have 
\begin{align*}
\| \s_n \F[\Xi_j (\vec{\phi}_n)](T) \|_{L^2_\xi} & \leq \| \s_n \|_{L^2_\xi (\supp \F[ \Xi_j  (\vec{\phi}_n)](T))} \| \Xi_j (\vec{\phi}_n) (T) \|_{\FL^\infty} \\
	& \les \| \s_n\|_{L^2_\xi (Q_A \cap Q_n)} C^j T^{2j} \| \vec{\phi}_n \|^{(k-1)j-1}_{\FLv^1} \| \vec{\phi}_n \|^2_{\H^0} \\
	& \les \mathds{1}_{\{ n \in Q_A\}} T^{2j}(RA)^{(k-1)j} R
\end{align*}

\noi so that
$$
\| \Xi_j [\vec{\phi}_n] (T) \|_{M_s^{2,q}}  \les T^{2j} (RA)^{(k-1)j} R \| (1 + \abs{n})^s \mathds{1}_{\{ n \in Q_A \}} \|_{\ell^q_n}.
$$

\noi Besides, a similar argument as for \eqref{EQ:EstXi1UpperM} yields
\begin{align*}
\|\Xi_j [\vec{u}_{0, n}] (T) - \Xi_j [\vec{\phi}_{ n}] (T)  \|_{M_s^{2,q}} & \les \|\Xi_j [\vec{u}_{0, n}] (T) - \Xi_j [\vec{\phi}_{ n}] (T)  \|_{M_0^{2,1}}, \\
	& \les T^{2j} \| \vec{u}_0 \|_{M^{2,1}_0 \times M^{2,1}_0} \big( \| \vec{u}_0 \|_{M^{2,1}_0 \times M^{2,1}_0} + \| \vec{\phi}_n \|_{M^{2,1}_0 \times M^{2,1}_0} \big)^{(k-1)j}, \\
	& \les T^{2j} \| \vec{u}_0 \|_{M^{2,1}_0 \times M^{2,1}_0} (RA^{\frac 12})^{(k-1)j},
\end{align*}

\noi which gives our third estimate.

Finally, our fourth estimate follows from a straightforward adaptaton of the proof of Proposition \ref{PROP:EstSecondPicard}, the only difference being the norm used.

\end{proof}

\subsubsection{The case of Wiener amalgam spaces}

This section relies a lot on the interactions between modulation and Wiener amalgam spaces.  Indeed, using \eqref{EQ:RelationsMandW_end} and Proposition \ref{PROP:EstForMs2qspaces}, we get respectively
\begin{equation}
\label{EQ:EstForWs2q_Xi0}
\| \Xi_0 [\vec{u}_{0,n}](T) \|_{W_s^{2,q}} \les \| \Xi_0 [\vec{u}_{0,n}](T) \|_{M_s^{2,\min(2,q)}} \les 1 + R N^s A^{\frac 12},
\end{equation}

\noi as well as
\begin{align}
\label{EQ:EstForWs2q_Xi1Upper}
\| \Xi_1 [\vec{u}_{0,n}] (T) - \Xi_1 [\vec {\phi}_n](T) \|_{W_s^{2,q}}  & \les \| \Xi_1 [\vec{u}_{0,n}] (T) - \Xi_1 [\vec {\phi}_n](T) \|_{M_s^{2,\min(2,q)}}\\
	& \les T^{2} (RA)^{(k-1)} \| \vec{u}_0 \|_{M^{2,1}_0( \M)},
\end{align}

\noi and, for any $j \geq 2$, 
\begin{align}
\label{EQ:EstForWs2q_Xij}
\| \Xi_j [\vec{u}_{0,n}] (T) \|_{W_s^{2,q}}  & \les \| \Xi_j [\vec{u}_{0,n}] (T) \|_{M_s^{2,\min(2,q)}}\\
	& \les T^{2j} (RA)^{(k-1)j} R \| (1 + \abs{n})^s \|_{\ell^{\min(2,q)} ( 1 \leq \abs{n} \leq A)}.
\end{align}

\noi Besides, we also get the following lower bound:
\begin{equation}
\label{EQ:EstForWs2q_Xi1}
\| \Xi_1 (\vec{\phi}_n) (T) \|_{W_s^{2,q}} \ges \| \Xi_1 (\vec{\phi}_n) (T) \|_{W_s^{2,\max(2,q)}} \ges R^k T^2 A^{k - 1}A^{ \frac 1q + s}.
\end{equation}

\noi Again, these estimates are equivalent to the ones we had for Sobolev spaces under the condition $A = 10$, so norm inflation with infinite loss of regularity follows.

\section*{Acknowledgments} The author would like to thank Tadahiro Oh for suggesting this problem and his continuous support throughout this work. The author acknowledges support from Tadahiro Oh's ERC grant (no. 864138 ``SingStochDispDyn"). The author would also like to thank Younes Zine for several helpful discussions.

\end{document}